\numberwithin{equation}{section}
\newtheorem{thm}{Theorem}[section]
\newtheorem{lemma}{Lemma}[section]
\newtheorem{prop}{Proposition}[section]
\newtheorem{Def}{Definition}[section]
\newtheorem{Rmk}{Remark}[section]
\newtheorem{claim}{Claim}[section]
\newtheorem{coro}{Corollary}[section]
\newcommand{\lelan}{\left\langle}
\newcommand{\rilan}{\right\rangle}
\newcommand{\rmnum}[1]{\romannumeral #1}
\newcommand{\Rmnum}[1]{\expandafter\@slowromancap\romannumeral #1@}
\title[Energy critical damped wave equations]{Soliton resolution for the energy critical damped wave equations in the radial case}
\author[J. Gu]
{Jingyuan Gu}
\author[L. Zhao]
{Lifeng Zhao}
\address{School of Mathematical Sciences, University of Science and Technology of China, Hefei, Anhui, 230026, PR China}
\email{gjy4869@mail.ustc.edu.cn}
\address{School of Mathematical Sciences, University of Science and Technology of China, Hefei, Anhui, 230026, PR China}
\email{zhaolf@ustc.edu.cn}
\begin{document}
	
	\begin{abstract}
		We consider the energy-critical damped wave equation
		with radial initial data in dimensions six and higher. The equation admits a nontrivial radial stationary solution $W$, called the ground state, which is unique up to sign and scale. The damping term breaks the scaling invariance and destroys the channel of energy estimates. We therefore use the collision-interval method. We prove that any solution with bounded energy norm behaves asymptotically as a superposition of the modulated ground states and a radiation term. In particular, in the global case the solution converges to a pure multi-bubble due to the damping effect. 
	\end{abstract}
	\maketitle

	\section{Introduction}
	\subsection{Setting of the problem}
	We study the Cauchy problem for the damped focusing energy-critical wave equation
	\begin{equation}\label{DNLW}
	\begin{cases}
	u_{tt}-\Delta u+\alpha u_t=\left|u\right|^{\frac{4}{D-2}}u,& (t,x)\in I\times\mathbb{R}^D,\\
	(u(0),\partial_tu(0))=(u_0,u_1),& (u_0,u_1)\in H^1(\mathbb R^D)\times L^2(\mathbb R^D),
	\end{cases}
	\end{equation}
	where $D\geq 6$ is the underlying spatial dimension, $u=u(t,r)\in \mathbb R$ and $r=|x|\in (0,\infty)$ is the radial coordinate in $\mathbb R^D$. The damping parameter $\alpha>0$ is a constant. 
	
	The energy for (\ref{DNLW}) is given by
	\begin{equation*}
	E(u(t),\partial_tu(t)):=\int_0^{\infty}\frac{1}{2}\left[ (\partial_t u(t))^2+(\partial_r u(t))^2 -\frac{D-2}{2D}|u(t)|^{\frac{2D}{D-2}}\right]r^{D-1}dr.
	\end{equation*}
	A direct computation shows  
	\begin{equation}\label{energy decay}
	\frac{d}{dt}E(\boldsymbol{u}(t))=-\alpha \int_{\mathbb{R}^D}|\partial_t u|^2dx,
	\end{equation}
	which indicates that the energy decreases as time progresses. We use boldface to denote pairs of functions, for instance
\(\boldsymbol v=(v,\dot v)\). Then \eqref{DNLW} can be written as
	\begin{equation*}
	\partial_t \boldsymbol{u}(t)=\widetilde J\circ DE(\boldsymbol{u}(t)), \text{ }\boldsymbol{u}(T_0)=\boldsymbol{u}_0,
	\end{equation*}
	where 
	
	\[\widetilde{J}=\left(\begin{array}{ccc}
	0&1\\
	-1&-\alpha
	\end{array}\right)
	\textbf{, }
	DE(\boldsymbol{u}(t))=\left(\begin{array}{ccc}
	-\Delta u(t)-f(u)\\
	\partial_t u(t)
	\end{array}
	\right),
	\]  
	and $f(u)=|u|^{\frac{4}{D-2}}u.$
	We also introduce the energy space $\mathcal{E}$, which is defined as
	\begin{equation*}
	\left\|\boldsymbol{v}\right\|_{\mathcal{E}}^2:=\int_0^{\infty}\left[(\dot v(r))^2+(\partial_r v(r))^2+\frac{(v(r))^2}{r^2}\right]r^{D-1}dr<\infty.
	\end{equation*}
	Several works have been devoted to the Cauchy problem \eqref{DNLW}. The Strichartz estimates of (\ref{DNLW}) were proved by T. Watanabe in \cite{Wat} and T. Inui in \cite{Inui}. Moreover, Inui \cite{Inui} established the local well-posedness and global existence for small initial data in the space $H^1\times L^2$.
	\subsection{Statement of the main result}
		
		In order to state our main result, we introduce the scaling as
		\begin{equation*}
		\boldsymbol{u}(t,r)\to \boldsymbol{u}_{\lambda}(t,r):=(\lambda^{-\frac{D-2}{2}}u(t/\lambda,r/\lambda),\lambda^{-\frac{D}{2}}\dot{u}(t/\lambda,r/\lambda)),\text{ }\lambda>0.
		\end{equation*} 
		We emphasize that the damping term breaks the scaling invariance of equation (\ref{DNLW}).
		
		Equation (\ref{DNLW}) admits a stationary solution $\boldsymbol{W}(x):=(W(x),0)$ where 
		\begin{equation*}
		W(x):=\left(1+\frac{|x|^2}{D(D-2)}\right)^{-\frac{D-2}{2}}.
		\end{equation*}
		It is well known that $W(x)$ is the unique (up to sign, scaling and translation), non-negative and nontrivial $C^2$ solution to 
		\begin{equation*}
		-\Delta W(x)=|W(x)|^{\frac{4}{D-2}}W(x),\text{ }x\in \mathbb{R}^D.
		\end{equation*}
		For each $\lambda>0$, we write $\boldsymbol{W}_{\lambda}(r):=(\lambda^{-\frac{D-2}{2}}W(\lambda^{-1}r),0)$. We define multi-bubble configurations as follows. 
		\begin{Def}[Multi-bubble configuration]\label{Multi-bubble configuration}Given $M\in \mathbb N_0$, $\Vec{\iota}=(\iota_1,...,\iota_M)\in \left\{-1,1\right\}^M$ and an increasing sequence $\Vec{\lambda}=(\lambda_1,\lambda_2,...,\lambda_M)\in (0,\infty)^M$, a multi-bubble configuration is defined by the formula
			\begin{equation*}
			\boldsymbol{\mathcal{W}}(\Vec{\iota},\Vec{\lambda};r):=\sum_{j=1}^M\iota_j \boldsymbol{W}_{\lambda_j}(r).
			\end{equation*}
		\end{Def}
        
    The soliton resolution conjecture predicts that, along the forward evolution,
a bounded solution should decompose into a finite sum of coherent structures,
ordered by their scales, plus a dispersive or radiative remainder. In the
energy-critical setting the coherent structures are precisely the rescaled
ground states \(\pm W_\lambda\). Thus the natural asymptotic object is a
multi-bubble configuration, possibly accompanied by a radiation term in the
finite-time blow-up case. Our main result proves this picture for radial
solutions of the damped equation in dimensions \(D\ge6\).

		\begin{thm}[Soliton Resolution]\label{soliton resolution}Let $D \geq 6$ and let $\boldsymbol{u}(t)$ be a finite energy solution to (\ref{DNLW}) with initial data $\boldsymbol{u}(0)=\boldsymbol{u}_0\in \mathcal{E},$ defined on its maximal forward interval of existence $[0,T_+)$. Suppose that
			\begin{equation}\label{Type 2}
			\limsup_{t \to T_+}{\left\|\boldsymbol{u}(t)\right\|_{H^1\times L^2}}<\infty.    
			\end{equation}
			Then,
			
			(Global solution) if $T_+=\infty$, there exists a time $T_0>0$, an integer $N\geq 0$, continuous functions $\lambda_1(t),...,\lambda_N(t)\in C^0([T_0,T_+)),$ signs $\iota_1,...,\iota_N\in \left\{-1,1\right\}$ and $\boldsymbol{g}(t)\in \mathcal{E}$ defined by
			\begin{equation*}
			\boldsymbol{u}(t)=\sum_{j=1}^{N}\iota_j\boldsymbol{W}_{\lambda_j(t)}+\boldsymbol{g}(t),
			\end{equation*}
			such that
			\begin{equation*}
			\left\|\boldsymbol{g}(t)\right\|_{\mathcal{E}}+\sum_{j=1}^N \frac{\lambda_j(t)}{\lambda_{j+1}(t)}\to 0 \text{ as } t\to \infty,
			\end{equation*}
			where we use the convention that $\lambda_{N+1}(t)=t;$
			
			(Blow-up solution) if $T_+<\infty$, there exists a time $T_0<T_+$, a function $\boldsymbol{u}_0^*\in \mathcal{E}$, an integer $N \geq 1$, continuous functions $\lambda_1(t),...,\lambda_N(t)\in C^0([T_0,T_+)),$ signs $\iota_1,...,\iota_N\in \left\{-1,1\right\}$ and $\boldsymbol{g}(t)\in \mathcal{E}$ defined by
			\begin{equation*}
			\boldsymbol{u}(t)=\sum_{j=1}^{N}\iota_j\boldsymbol{W}_{\lambda_j(t)}+\boldsymbol{u}_0^*+\boldsymbol{g}(t),
			\end{equation*}
			such that
			\begin{equation*}
			\left\|\boldsymbol{g}(t)\right\|_{\mathcal{E}}+\sum_{j=1}^N \frac{\lambda_j(t)}{\lambda_{j+1}(t)}\to 0 \text{ as } t\to T_+,
			\end{equation*}
			where we use the convention that $\lambda_{N+1}=T_+-t$.
		\end{thm}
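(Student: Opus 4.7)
The plan is to follow the Jendrej--Lawrie collision-interval scheme, adapted to the damped setting. The damping identity \eqref{energy decay} is a double-edged sword: it destroys the finite-speed-of-propagation exterior energy (``channel of energy'') estimates that are central in the undamped theory of Duyckaerts--Kenig--Merle and their successors, but it provides a monotone dissipation that will force the asymptotic radiation to vanish in the global regime. The broad architecture has three layers: (i) establish a sequential soliton resolution along a well-chosen time sequence $t_n\to T_+$; (ii) rule out bubble collisions by constructing collision intervals and deriving a contradiction on each; (iii) upgrade sequential convergence to continuous convergence in $t$.

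\textbf{Sequential decomposition.} First I would establish a Type II variational lemma: if $\boldsymbol{u}(t)$ is bounded in $H^1\times L^2$, then the energy $E(\boldsymbol{u}(t))$ is monotone non-increasing and hence admits a limit $E_\infty$ as $t\to T_+$. Using the damping identity \eqref{energy decay}, one deduces that $\int_0^{T_+}\|\partial_t u\|_{L^2}^2\,dt<\infty$ in the global case, so there exists a sequence $t_n\to T_+$ along which $\|\partial_t u(t_n)\|_{L^2}\to 0$. Along such a sequence, $\boldsymbol{u}(t_n)$ is approximately a critical point of $E$, and a Brezis--Coron / profile-decomposition argument (radial, concentrating at scales $\lambda_{j,n}$) yields
\begin{equation*}
\boldsymbol{u}(t_n)=\sum_{j=1}^{N}\iota_j\boldsymbol{W}_{\lambda_{j,n}}+\boldsymbol{u}_0^\ast+\boldsymbol{g}_n,\qquad \|\boldsymbol{g}_n\|_{\mathcal{E}}+\sum_j \lambda_{j,n}/\lambda_{j+1,n}\to 0,
\end{equation*}
with $\boldsymbol{u}_0^\ast=0$ in the global case (since $\|\partial_t u(t_n)\|_{L^2}\to 0$ kills the asymptotic linear radiation after transfer to the dispersive undamped problem). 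In the blow-up case, a similar extraction is done along sequences $t_n\to T_+<\infty$, where $\boldsymbol{u}_0^\ast\in\mathcal{E}$ is defined via the pre-compact singular part of the solution at blow-up.

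\textbf{Collision intervals.} The main body of the proof is to promote the sequential statement to a continuous one. Suppose for contradiction that continuous convergence fails; then there exist $\eta>0$ and disjoint ``collision intervals'' $[a_n,b_n]\subset[T_0,T_+)$ on which the multi-bubble configuration is $\eta$-approximate at the endpoints but not in the interior, in the sense that two consecutive scales $\lambda_{j,n},\lambda_{j+1,n}$ approach a bounded ratio. On each $[a_n,b_n]$ I would perform a modulation analysis, introducing refined parameters $(\lambda_j(t),\iota_j)$ and orthogonality conditions w.r.t.\ the bound-state directions $\Lambda W_{\lambda_j}$, and derive modulation ODEs. The key tool is a localized virial / Morawetz-type functional $\mathcal{I}(t)$ built with a cutoff $\chi_R$ placed between consecutive scales, designed so that on the sub-intervals where the radiation is small, $\mathcal{I}'(t)$ is coercive in the smallest inter-scale ratio; meanwhile, the damping term contributes a controlled, sign-definite correction. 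Integrating from $a_n$ to $b_n$ and comparing with the uniform bound $|\mathcal{I}(t)|\lesssim 1$ yields the contradiction.

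\textbf{Main obstacle.} The hardest point, which I expect to absorb most of the technical work, is adapting the virial identity to the damped equation. In the Jendrej--Lawrie framework for undamped wave maps and critical wave equations, the exterior-energy inequality supplies the compactness of the error $\boldsymbol{g}(t)$ outside the bubble cores, which is precisely what fails here. I would replace this by a combination of (a) the dissipation $\int|\partial_t u|^2\,dt<\infty$ to control the $\dot{u}$ component of the error, and (b) a spatially localized energy inequality (exploiting radiality and $D\geq 4$ via Hardy/Sobolev) to control the gradient part, at the cost of lower-order damping-induced terms that must be shown to be absorbable. Once the localized virial argument closes on the collision intervals, the extraction of $\boldsymbol{u}_0^\ast$ in the blow-up case follows from the usual singular/regular decomposition at $T_+$, and in the global case the dissipation forces $\boldsymbol{u}_0^\ast=0$, giving the pure multi-bubble statement.
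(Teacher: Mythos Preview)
Your overall architecture is correct and essentially matches the paper: sequential resolution via profile decomposition and a Jia--Kenig virial compactness lemma, followed by collision intervals with modulation and a localized virial contradiction. However, your identification of the ``main obstacle'' contains a misconception that may lead you astray. You write that in the Jendrej--Lawrie framework the exterior-energy inequality supplies compactness of the error outside the bubble cores; this is incorrect. The whole point of the Jendrej--Lawrie collision-interval method is that it \emph{avoids} channel-of-energy estimates entirely---that is precisely why it applies to damped equations, wave maps, and harmonic heat flow where such estimates are unavailable. So there is nothing to ``replace'' on that front.

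The genuine damping-specific obstacles are different and you have not named them. First, the virial identity picks up an extra term $-\alpha\mathcal{V}(t)$, and the modulation ODE for the bubble-interaction parameter becomes $\beta_j'+\alpha\beta_j\geq(\cdots)$ rather than $\beta_j'\geq(\cdots)$. In the finite-time case this is harmless: multiply by $e^{\alpha(t-T_+)}$ and proceed as in the undamped argument, since $T_+-t$ is small. In the global case this trick fails because collision intervals $[a_n,b_n]$ can be arbitrarily long. Two separate ideas are needed: (i) the one-sided profile decomposition (damping forces all profile scales $\lambda_n^j\lesssim 1$, because $H^1$ rather than $\dot H^1$ is bounded) gives a uniform upper bound on $\lambda_K(t)$, which combined with $\int_0^\infty\|\partial_t u\|^2\,dt<\infty$ yields $|\mathcal{V}(t)|\lesssim\rho(t)\|\partial_t u\|_{L^2}\lesssim\boldsymbol{d}(t)$ and the crucial control $\int|\xi_j'(t)|\,dt\ll\int\boldsymbol{d}(t)^3\,dt$ on the modulation parameters; (ii) the sequential compactness lemma forces the ``bad'' sub-intervals $[c_n,d_n]\subset[a_n,b_n]$ where $\boldsymbol{d}\gtrsim\eta$ to have length tending to zero, so on those pieces one can again use the exponential trick. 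Your proposed replacement (Hardy/Sobolev localized energy plus dissipation on $\dot u$) does not address either of these points.
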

		
	\begin{Rmk}
The local theory for \eqref{DNLW} is formulated in \(H^1(\mathbb R^D)\times
L^2(\mathbb R^D)\). However, the soliton resolution statement is measured in the
energy topology \(\mathcal E\), which corresponds to the radial
\(\dot H^1(\mathbb R^D)\times L^2(\mathbb R^D)\) topology together with the
Hardy term. This is the natural topology for multi-bubble decompositions, since the quadratic part of the energy controls \(\nabla u\) and \(\partial_tu\), but
does not provide a coercive control of the full \(L^2\)-norm of \(u\).
\end{Rmk}
      Soliton resolution has been extensively studied for energy-critical dispersive
equations. For the focusing energy-critical wave equation, Duyckaerts, Kenig,
and Merle proved soliton resolution in all odd spatial dimensions \(D\ge3\)
\cite{DKM 2011,DKM 2020,DKM 2021,DKM 2023}. The four-dimensional case was
resolved by Duyckaerts, Kenig, Martel, and Merle \cite{DKMM 2021}, and the
six-dimensional case by Collot, Duyckaerts, Kenig, and Merle \cite{CDKM 2022}.
The general even-dimensional case was later studied in
\cite{CDKM 2022.1,CDKM 2023}. A central ingredient in these works is the
channel of energy method, which detects nontrivial radiation near the light cone.

Another related direction concerns the dynamics near the ground state and near
the threshold energy. The threshold scattering/blow-up dichotomy for the
focusing energy-critical wave equation was initiated by Kenig and Merle
\cite{KM 2008}. More refined descriptions near the ground state, including
center-stable manifolds and dynamics away from the ground state, were obtained
by Krieger--Nakanishi--Schlag \cite{KNS 2013,KNS 2015}. Related threshold
dynamics for nonlinear Klein--Gordon equations were developed by Nakanishi and
Schlag \cite{NS 2011,NS 2011 2}. Multi-soliton dynamics for damped
Klein--Gordon equations were studied in \cite{IN}.

The damped wave equation considered here has a different structure. The energy
decay identity and the vanishing result of Inui \cite{Inui} show that the
channel of energy mechanism is not suitable in this setting. We instead follow
the collision-interval strategy of Jendrej and Lawrie \cite{JL}, which proves
full soliton resolution without relying on exterior energy channels. Their
method grew out of the analysis of two-bubble dynamics \cite{JL 2018,JL 2019}
and was further developed for equivariant wave maps \cite{JL 2021}. In the
present paper we adapt this framework to the damped energy-critical wave
equation.
\subsection{The outline of the proof}
We describe the proof in three steps.

\medskip
\noindent
\textbf{Step 1: profile decomposition.}
The first ingredient is a profile decomposition adapted to the damped wave flow.
Since the damping term breaks the scaling symmetry, the decomposition differs from the profile decomposition for the free wave
equation in \cite{BG 1999}. There is at
most one fixed-scale damped profile, while all remaining nontrivial profiles appear
at vanishing scales. After rescaling around such a small scale, the damping
coefficient becomes \(\alpha\lambda_n\to0\), and the limiting profile is therefore
governed by the free energy-critical wave equation. This yields the linear and
nonlinear profile decompositions proved in Section~3.
The one-sided nature of the profile decomposition is useful in two ways. First, it
excludes the large-scale profiles which would be difficult to control in the damped
setting. Second, in the global case it gives an upper bound on the relevant scaling
parameters, which will later be used in the no-return argument.

\medskip
\noindent
\textbf{Step 2: radiation extraction and sequential soliton resolution.}
The profile decomposition is then used to obtain compactness along a sequence of
times after removing the exterior radiation. In the finite-time case, one extracts a
regular radiation term \(\boldsymbol u^*(t)\) outside the backward light cone of the
blow-up point. In the global case, the damping gives an additional decay mechanism:
the exterior radiation vanishes as \(t\to\infty\). These two facts are recorded as follows:

\begin{prop}[Properties of the radiation, finite-time case]\label{Properties of the radiation}Let $\boldsymbol{u}(t)\in \mathcal{E}$ be a finite energy solution to (\ref{DNLW}) on a finite interval $I_*:=[0,T_+],$ $T_+<\infty$ such that (\ref{Type 2}) holds. Then there exists a finite energy solution $\boldsymbol{u}^*(t)\in \mathcal{E}$ to (\ref{DNLW}) called the radiation, and a function $\rho: I_*\to (0,\infty)$ that satisfies
			\begin{equation*}
			\lim_{t\to T_+}\left((\rho(t)/(T_+-t))^{\frac{D-2}{2}}+\left\|\boldsymbol{u}(t)-\boldsymbol{u}^*(t)\right\|_{\mathcal{E}(\rho (t))}\right)=0.
			\end{equation*}
			Moreover, for any $\gamma \in (0,1)$,
			\begin{equation*}
			\left\|\boldsymbol{u}^*(t)\right\|_{\mathcal{E}(0,\gamma (T_+-t))}\to 0 \text{ as }t\to T_+.
			\end{equation*}
		\end{prop}
		In the global case, the radiation term will vanish as time goes to infinity due to the damping effect. In fact, we have
		\begin{prop}[Properties of the radiation, global case]\label{Properties of the radiation global case} Let $\boldsymbol{u}(t)\in \mathcal{E}$ be a finite energy solution to (\ref{DNLW}) on the interval $I_*:=[0, \infty)$ as above such that (\ref{Type 2}) holds. Then there exists a function $\rho: I_*\to (0,\infty)$ that satisfies
			\begin{equation}\label{Properties of the radiation gloabl formular}
			\lim_{t\to \infty}\left((\rho(t)/t)^{\frac{D-2}{2}}+\left\|\boldsymbol{u}(t)\right\|_{\mathcal{E}(\rho (t))}\right)=0.
			\end{equation}
		\end{prop}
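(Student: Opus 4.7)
The plan is to first derive strong time-integrability of $\partial_t u$ from the dissipation identity, then to mimic the construction used for Proposition \ref{Properties of the radiation} in the global setting, and finally to exploit the dissipation once more to rule out a non-trivial global radiation.

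The hypothesis (\ref{Type 2}) bounds $\|u(t)\|_{\dot H^1}$ uniformly, so by Sobolev embedding the potential part of $E$ is uniformly controlled and hence $E(\boldsymbol{u}(t))$ is uniformly bounded. Combined with the monotone decrease furnished by (\ref{energy decay}), $E(\boldsymbol{u}(t))$ converges to some $E_\infty \in \mathbb{R}$ as $t \to \infty$, and integrating (\ref{energy decay}) in time yields
\begin{equation*}
\alpha \int_0^\infty \|\partial_\tau u(\tau)\|_{L^2(\mathbb{R}^D)}^2 \, d\tau = E(\boldsymbol{u}(0)) - E_\infty < \infty.
\end{equation*}
In particular $\liminf_{t\to\infty}\|\partial_t u(t)\|_{L^2} = 0$, and one may fix a sequence $t_n \to \infty$ along which $\|\partial_t u(t_n)\|_{L^2} \to 0$.

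I would next run essentially the same radiation-extraction argument used to prove Proposition \ref{Properties of the radiation}, anchored at the sequence $\{t_n\}$ rather than at a blow-up time. Finite propagation speed and the uniform energy bound yield, via a truncation-and-flow construction, a finite-energy global solution $\boldsymbol{u}^*(t)$ to (\ref{DNLW}) on some $[T_0,\infty)$ and a function $\rho: [T_0,\infty)\to(0,\infty)$ with $\rho(t)/t \to 0$ such that
\begin{equation*}
\|\boldsymbol{u}(t) - \boldsymbol{u}^*(t)\|_{\mathcal{E}(\rho(t))} \to 0 \quad \text{as } t \to \infty,
\end{equation*}
together with an analogue of (\ref{Properties of the radiation formular 2}) locating $\boldsymbol{u}^*$ in the exterior region.

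The decisive new step, and what I expect to be the main obstacle, is to show that this candidate radiation vanishes in the global case, i.e.\ $\boldsymbol{u}^* \equiv 0$. The damping is what drives this: the dissipation identity transfers to $\boldsymbol{u}^*$, so $\partial_t u^* \in L^2_t L^2_x$ on $[T_0,\infty)$, and the smallness $\|\partial_t u(t_n)\|_{L^2}\to 0$ is built into the Cauchy data of $\boldsymbol{u}^*$ by construction. Combining these two pieces of information with the exterior localization forced by finite propagation speed and with a perturbation analysis of (\ref{DNLW}) in the Strichartz framework of \cite{Inui, Wat} should force $\boldsymbol{u}^* \equiv 0$: any finite-energy solution of the damped flow with vanishing interior energy, exterior localization, and time-integrable kinetic energy cannot carry any nontrivial asymptotic mass. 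Once this rigidity is in hand, (\ref{Properties of the radiation gloabl formular}) is immediate from the displayed estimate, and $\rho(t)$ can be adjusted by a diagonal procedure to simultaneously guarantee $\rho(t)/t \to 0$ and the vanishing of $\|\boldsymbol{u}(t)\|_{\mathcal{E}(\rho(t))}$.
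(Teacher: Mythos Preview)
Your starting point---using the dissipation identity to get $\partial_t u \in L^2_tL^2_x$ and extract a sequence $t_n$ with $\|\partial_t u(t_n)\|_{L^2}\to 0$---matches the paper. But the remainder of your plan diverges from the paper's argument and contains a genuine gap at the rigidity step.

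The gap is your claim that ``any finite-energy solution of the damped flow with vanishing interior energy, exterior localization, and time-integrable kinetic energy cannot carry any nontrivial asymptotic mass.'' This is not justified, and as stated it is false: a stationary bubble $\boldsymbol W_\lambda$ has $\partial_t W_\lambda\equiv 0$ (hence trivially in $L^2_tL^2_x$) yet carries fixed energy. To kill a candidate nonlinear radiation $\boldsymbol u^*$ you would need to show it has finite global Strichartz norm so that Lemma~\ref{decay of H dorm of DNLW} (Inui's decay theorem) applies, and nothing in your outline supplies that. The truncation-and-flow construction from the finite-time case also does not transfer cleanly, since there is no limiting time at which to anchor it.

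The paper takes a different and more direct route in Proposition~\ref{global case radiation}: rather than constructing a candidate $\boldsymbol u^*$ and then proving it vanishes, it shows directly that $\|\boldsymbol u(t)\|_{\mathcal E(t-R,\infty)}\to 0$ for every fixed $R$. Along your good sequence $s_n$ (chosen with a maximal-function refinement so that time-averaged kinetic energy is also small), the linear profile decomposition of Proposition~\ref{LPF for DNLW} is applied, and the DKM argument forces every profile to be $\pm\boldsymbol W$. One then writes $\boldsymbol u(s_n+t)=\sum_j\iota_j\boldsymbol W_{\lambda_{n,j}}+\boldsymbol w_n^J(t)+\boldsymbol r_n^J(t)$ and runs a Strichartz bootstrap for $\boldsymbol r_n^J$ restricted to the exterior region $\{|x|>t+s_n-R\}$. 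The bubbles contribute negligibly there (explicit decay of $W$), the dispersive error $\boldsymbol w_n^J$ decays by Lemma~\ref{Lp Lq decay}, and the nonlinear interactions close by H\"older and orthogonality. This gives the exterior vanishing without ever needing a separate rigidity statement for a nonlinear $\boldsymbol u^*$. Proposition~\ref{Properties of the radiation global case} then follows by combining this with Proposition~\ref{No self-similar concentration for global solutions} exactly as in \cite{JL}.
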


After this reduction, the remaining compactness statement is proved by combining
the nonlinear profile decomposition with the localized virial argument. The damping
term is harmless in the finite-time compactness argument, while in the global case
the dissipation identity
\[
    \int_0^\infty \|\partial_tu(t)\|_{L^2}^2\,dt<\infty
\]
is used to obtain the required vanishing. This gives the sequential soliton
resolution:

\begin{thm}[Sequential soliton resolution]\label{Sequential soliton resolution}Let $\boldsymbol{u}(t)\in \mathcal{E}$ be a finite energy solution to (\ref{DNLW}) on an interval $I_*=[0,T_+)$, $T_+\leq \infty$ such that (\ref{Type 2}) holds. In the finite-time case let \(\boldsymbol u^*(t)\) be the radiation from
Proposition~\ref{Properties of the radiation}; in the global case set
\(\boldsymbol u^*(t)\equiv0\). Then there exists an integer $N\geq 0$, a sequence of times $t_n\to T_+$, a vector of signs $\Vec{\iota}\in \left\{-1,1\right\}^N$ and a sequence of scales $\Vec{\lambda}_n\in (0,\infty)^N$ such that
			\begin{equation*}
			\lim_{n\to \infty}\left(\left\|\boldsymbol{u}(t_n)-\boldsymbol{u}^*(t_n)-\boldsymbol{\mathcal{W}}(\Vec{\iota},\Vec{\lambda}_n)\right\|_{\mathcal{E}}+\sum_{j=1}^N\frac{\lambda_{n,j}}{\lambda_{n,j+1}}\right)=0, 
            \end{equation*} 
			where as above we use the convention $\lambda_{n,N+1}:=t_n$ in the global case and $\lambda_{n,N+1}:=T_+-t_n$ in the finite-time case.   
		\end{thm}

\medskip
\noindent
\textbf{Step 3: from sequential to full-time convergence.}
It remains to exclude the possibility that the solution leaves a small
neighborhood of the multi-bubble manifold after entering it along the sequence
\(t_n\). If full convergence failed, one could construct collision intervals:
on each such interval the solution is close to a multi-bubble configuration at
the endpoints but is separated from the multi-bubble manifold at some intermediate
time.

The no-return argument is carried out in Section~5.
First, a geometric exterior-interior decomposition separates the bubbles which
remain well described from the bubbles which actually participate in the collision.
This part uses finite speed of propagation, the exterior radiation estimate, and
the static multi-bubble modulation lemma, and is unchanged by the damping term.
Second, one proves modulation estimates for the interior bubbles. The damping
appears explicitly in the finite-dimensional system through terms such as
\(\beta_j'+\alpha\beta_j\), and these are treated by an exponential integrating
factor in the finite-time case and by the dissipation identity in the global case.
Finally, a localized virial functional is integrated over the collision interval.
In the finite-time case the damping term is absorbed by the weight
\(e^{\alpha(t-T_+)}\); in the global case the additional term
\(-\alpha\mathcal V(t)\) is controlled by the vanishing of the radiation and the
dissipation. This gives a contradiction and proves the full-time soliton
resolution.
\begin{Rmk}
The proof of Theorem~\ref{soliton resolution} is written for \(D\ge6\). The sequential
part of the argument, including the extraction of radiation and the sequential
multi-bubble decomposition, is not the main source of this restriction and can be
carried out in the natural range \(D\ge4\). The difference between \(D=5\) and
higher dimensions comes from the slower decay of \(W\) and the corresponding
low-dimensional nonlinear estimates in the profile decomposition and perturbation
arguments. These difficulties are similar to those treated in the undamped
energy-critical wave equation by the methods of \cite{JL}, and we expect the
case \(D=5\) to be accessible by incorporating those estimates. We do not include
this additional low-dimensional analysis here.

The four-dimensional case is more delicate. In \(D=4\), the ground state and its
scaling direction are not in \(L^2\). Thus the modulation variables used below to
describe the scale velocity, and in particular the corrected scale dynamics, are
not directly available. This would require a different treatment.
\end{Rmk}

The paper is organized as follows. Section~2 contains the
linear estimates, finite speed of propagation, virial identities, and the
multi-bubble tools used later. Section~3 proves the
linear and nonlinear profile decompositions for the damped equation. In
Section~4 we extract the radiation and prove the
sequential soliton resolution. Section~5 upgrades the
sequential convergence to full-time convergence by the collision-interval
no-return argument.

    \section{Preliminaries}
    \subsection{Strichartz estimates and decay estimates} We first introduce the linear propagators for the damped wave equation with a general damping coefficient. For $a\geq 0$, consider the linear equation
    \begin{equation}\label{linear dnlw}
        \partial_{tt}u-\Delta u+a\partial_t u=F.
    \end{equation}
    When $a>0$, we define the operator $\mathcal{K}_a(t)$ by 
	\[
    \widehat{\mathcal K_a(t)f}(\xi)
    :=
    e^{-\frac a2 t}
    \frac{\sin\bigl(t\sqrt{|\xi|^2-a^2/4}\bigr)}
    {\sqrt{|\xi|^2-a^2/4}}
    \widehat f(\xi),
    \qquad t\geq 0.
\]
Here and below the multiplier is understood by analytic continuation in the
low-frequency region \(|\xi|<a/2\), namely
\[
    \frac{\sin\bigl(t\sqrt{|\xi|^2-a^2/4}\bigr)}
    {\sqrt{|\xi|^2-a^2/4}}
    =
    \frac{\sinh\bigl(t\sqrt{a^2/4-|\xi|^2}\bigr)}
    {\sqrt{a^2/4-|\xi|^2}},
    \qquad |\xi|<a/2.
\]
For \(a=0\), we use the convention
\[
    \mathcal K_0(t)=\frac{\sin(t|\nabla|)}{|\nabla|}.
\]
For initial data \((u_0,u_1)\in H^1(\mathbb R^D)\times L^2(\mathbb R^D)\), we
denote by
\[
    S_a(t)(u_0,u_1)=(u(t),\partial_tu(t))
\]
the solution flow of the homogeneous linear equation
\[
    \partial_{tt}u-\Delta u+a\partial_tu=0.
\]
In terms of \(\mathcal K_a(t)\), the solution is given by
\[
    u(t)
    =
    (\partial_t+a)\mathcal K_a(t)u_0+\mathcal K_a(t)u_1.
\]
Indeed,
\[
    \mathcal K_a(0)=0,
    \qquad
    \partial_t\mathcal K_a(0)=\mathrm{Id},
\]
and hence $ u(0)=u_0,
    \qquad
    \partial_tu(0)=u_1.$ When \(a=1\), this agrees with the usual notation in \cite{Inui,InuiWakasugi2021}:
\[
    u(t)
    =
    \partial_t\mathcal K_1(t)u_0+\mathcal K_1(t)(u_0+u_1).
\]
In the inhomogeneous case, assume $u$ satisfies (\ref{linear dnlw})
on a time interval \(I\), then for \(t,t_0\in I\) with \(t\geq t_0\), we have the
Duhamel formula
\[
    u(t)
    =
    (\partial_t+a)\mathcal K_a(t-t_0)u(t_0)
    +
    \mathcal K_a(t-t_0)\partial_tu(t_0)
    +
    \int_{t_0}^t\mathcal K_a(t-s)F(s)\,ds.
\]
Equivalently, we have
\[
    S_a(t-t_0)(u(t_0),\partial_tu(t_0))
    =
    \left(
    (\partial_t+a)\mathcal K_a(t-t_0)u(t_0)
    +
    \mathcal K_a(t-t_0)\partial_tu(t_0),
    \ \partial_tu(t)
    \right),
\]
and the inhomogeneous contribution is generated by
\[
    \int_{t_0}^t\mathcal K_a(t-s)F(s)\,ds.
\]
With these notations, we arrive at the following definition.

	\begin{Def}[Solution]\label{solution}Let $T \in (0,\infty]$. We say that u is a solution to (\ref{DNLW}) on [0,T) if 
		u satisfies $(u,\partial_t u) \in C([0,T):H^1(\mathbb{R}^D)\times L^2(\mathbb{R}^D))$, $\langle \nabla\rangle^{\frac{1}{2}}u \in L_{t,x}^{2(D+1)/(D-1)}(I)$ and $u \in L_{t,x}^{2(D+1)/(D-1)}(I)$ for any compact interval $I \subset [0,T)$,	$(u(0),\partial_t u(0))=(u_0,u_1)$, and the Duhamel formula
        \begin{equation*}
             u(t)
    =
    (\partial_t+\alpha)\mathcal K_\alpha(t)u_0
    +
    \mathcal K_\alpha(t)u_1
    +
    \int_0^t\mathcal K_\alpha(t-s)(\left|u(s)\right|^{\frac{4}{D-2}}u(s))\,ds
        \end{equation*}
		for all $t \in [0,T) .$ We say that u is global if $T=\infty$.

	\end{Def}	
\begin{lemma}[Finite speed of propagation]
\label{lem:finite-speed}
Let \(u\) and \(v\) be two finite-energy solutions to \eqref{DNLW}
on a time interval \(I\). Let \(t_0\in I\), \(x_0\in\mathbb R^D\), and \(R>0\).
Assume that
\[
    (u(t_0),\partial_tu(t_0))
    =
    (v(t_0),\partial_tv(t_0))
    \qquad\text{on }B(x_0,R).
\]
Then $u(t,x)=v(t,x)$
for all \((t,x)\in I\times\mathbb R^D\) such that
\[
    |x-x_0|<R-|t-t_0|.
\]
Equivalently, the value of the solution in a space-time cone depends only on the
initial data inside the base of the cone.
\end{lemma}
\begin{proof}
This follows from the standard local energy identity for the difference
\(u-v\). The damping term is lower order and has the favorable sign in the local
energy estimate, while the nonlinearity is local in \(u\). Hence the usual
domain-of-dependence argument for semilinear wave equations applies.
\end{proof}

    Next, we denote the critical Strichartz norm by
\[
    \|u\|_{S_D(I)}
    :=
    \|u\|_{L_{t,x}^{\frac{2(D+1)}{D-2}}(I\times\mathbb R^D)}.
\]
For \(D\geq6\), we shall also use the following auxiliary norms. Set
\[
    \|u\|_{X(I)}
    :=
    \|u\|_{L_t^{\frac{D(D+1)}{D+2}}
    W_x^{\frac2D,\frac{2(D+1)}{D-1}}(I\times\mathbb R^D)},
\]
\[
    \|F\|_{X'(I)}
    :=
    \|F\|_{L_t^{\frac{D^2+D}{3D+2}}
    W_x^{\frac2D,\frac{2(D+1)}{D+3}}(I\times\mathbb R^D)},
\]
and
\[
    \|u\|_{W(I)}
    :=
    \|u\|_{L_t^{\frac{2(D+1)}{D-1}}
    B_{ \frac{2(D+1)}{D-1},2}^{1/2}(I\times\mathbb R^D)},
\]
\[
    \|F\|_{W'(I)}
    :=
    \|F\|_{L_t^{\frac{2(D+1)}{D+3}}
    B_{ \frac{2(D+1)}{D+3},2}^{1/2}(I\times\mathbb R^D)}.
\]
We also denote by \(S_1(I)\) the finite Besov-Strichartz norm used in the
high-dimensional energy-critical theory. More precisely, \(S_1(I)\) is a finite
maximum of norms of the form
\[
    L_t^q B_{r,2}^{1-\gamma(r)}(I\times\mathbb R^D),
    \qquad
    \frac1q=\frac{D-1}{2}\left(\frac12-\frac1r\right),
    \qquad
    \gamma(r)=\frac{D+1}{2}\left(\frac12-\frac1r\right),
\]
chosen so that
\[
    \|u\|_{W(I)}+\|u\|_{X(I)}+\|u\|_{Y(I)}
    \lesssim
    \|u\|_{S_1(I)}
\]
for the auxiliary Sobolev norm \(Y(I)\) appearing in the nonlinear estimates.
For \(D\geq6\), we define
\[
    \|u\|_{X_D(I)}
    :=
    \|u\|_{S_D(I)}
    +
    \|u\|_{X(I)}
    +
    \|u\|_{W(I)}
    +
    \|u\|_{S_1(I)},
\]
and
\[
    \|F\|_{N_D(I)}
    :=
    \|F\|_{X'(I)}
    +
    \|F\|_{W'(I)}.
\]
For \(4\leq D\leq5\), the same notation \(X_D(I)\) and \(N_D(I)\) will denote
the corresponding standard Strichartz solution and forcing spaces; in these
dimensions no exotic Besov component is needed.
We shall use the following estimates. For \(a=1\), the non-endpoint Strichartz
estimates were proved by Inui, and the wave-endpoint case was proved in \cite{InuiWakasugi2021}. For \(a>0\), the estimates follow by the scaling
\[
    v(s,y)=a^{-\frac{D-2}{2}}u(s/a,y/a).
\]
In what follows, the damping coefficient always satisfies \(0\leq a\leq\alpha\),
and the constants in the Strichartz estimates below are uniform in this range.
First, the homogeneous estimate reads
\[
    \|S_a(t-t_0)(u_0,u_1)\|_{X_D(I)}
    \lesssim
    \|(u_0,u_1)\|_{H^1\times L^2}.
\]
In particular,
\[
    \|S_a(t-t_0)(u_0,u_1)\|_{S_D(I)}
    \lesssim
    \|(u_0,u_1)\|_{H^1\times L^2}.
\]
The inhomogeneous estimate is
\[
    \left\|
    \int_{t_0}^t\mathcal K_a(t-s)F(s)\,ds
    \right\|_{X_D(I)}
    \lesssim
    \|F\|_{N_D(I)}.
\]
The nonlinear estimates used below are summarized as follows. Let $f(u)=|u|^{\frac4{D-2}}u.$
Then, for \(D\geq6\),
\[
    \|f(u)\|_{X'(I)}
    \lesssim
    \|u\|_{X(I)}^{\theta\frac4{D-2}+1}
    \|u\|_{S_1(I)}^{(1-\theta)\frac4{D-2}},
\]
and
\[
    \|f(u)\|_{W'(I)}
    \lesssim
    \|u\|_{X(I)}^{\theta\frac4{D-2}}
    \|u\|_{S_1(I)}^{(1-\theta)\frac4{D-2}+1},
\]
for a constant \(\theta=\theta(D)\in(0,1)\). Moreover, the difference estimate
\[
    \|f(u)-f(v)\|_{N_D(I)}
    \leq
    C\bigl(\|u\|_{X_D(I)},\|v\|_{X_D(I)}\bigr)
    \|u-v\|_{X(I)}
\]
holds on bounded \(X_D(I)\)-balls, and the constant is small whenever the relevant
critical \(S_D\)-norms are small.
\begin{Rmk}
The complete inhomogeneous Strichartz estimates contain an additional derivative
loss parameter \(\delta\), depending on the admissible pairs. We shall not record
the full table of \(\delta\), see \cite{Inui,InuiWakasugi2021}. In this paper, the estimates are used only through the
above \(X_D(I)\)-\(N_D(I)\) framework. The Besov-type estimates required in
dimensions \(D\geq6\) are absorbed into the definitions of \(X_D(I)\) and
\(N_D(I)\).
\end{Rmk}
\begin{Rmk}
The uniformity in \(a\in[0,\alpha]\) is only used for the Strichartz and
perturbative estimates above. We do not claim uniform \(L^p\)-\(L^q\) decay
estimates for all \(a\in[0,\alpha]\). This distinction will be important when
small-scale profiles are considered: after rescaling by \(\lambda_n\), the damping
coefficient becomes \(a_n=\alpha\lambda_n\), and hence \(a_n\in[0,\alpha]\) and
\(a_n\to0\) whenever \(\lambda_n\to0\).
\end{Rmk}
We next record the \(L^p\)-\(L^q\) decay estimates for the linear damped flow
with the fixed damping coefficient \(\alpha>0\). These estimates reflect the
diffusion phenomenon of the damped wave equation. Here \(P_{\leq \alpha}\) and \(P_{>\alpha}\) denote smooth Fourier cutoffs to the regions
\(|\xi|\lesssim \alpha\) and \(|\xi|\gtrsim \alpha\), respectively.
\begin{lemma}[\(L^p\)-\(L^q\) estimates (Theorem 1.1, \cite{MTMY})]
\label{Lp Lq decay}
Let \(1\leq q\leq p<\infty\), \(p\neq1\), and let \(s_1\leq s_2\). Set
\[
    \beta=\beta(p):=(D-1)\left|\frac12-\frac1p\right|.
\]
Then there exist constants \(C>0\), \(c_\alpha>0\), and \(\delta_p>0\), depending
on \(D,p,q,s_1,s_2\) and on the fixed damping coefficient \(\alpha\), such that
for all \(t>0\),
\[
\begin{aligned}
    \bigl\||\nabla|^{s_1}\mathcal K_\alpha(t)g\bigr\|_{L^p}
    \leq {}&
    C\langle t\rangle^{-\frac D2(\frac1q-\frac1p)-\frac{s_1-s_2}{2}}
    \bigl\||\nabla|^{s_2}P_{\leq\alpha}g\bigr\|_{L^q}
\\
    &\quad
    +C e^{-c_\alpha t}\langle t\rangle^{\delta_p}
    \bigl\||\nabla|^{s_1}P_{>\alpha}g\bigr\|_{H_p^{\beta-1}},
\end{aligned}
\]
and
\[
\begin{aligned}
    \bigl\||\nabla|^{s_1}\partial_t\mathcal K_\alpha(t)g\bigr\|_{L^p}
    \leq {}&
    C\langle t\rangle^{-\frac D2(\frac1q-\frac1p)-\frac{s_1-s_2}{2}-1}
    \bigl\||\nabla|^{s_2}P_{\leq\alpha}g\bigr\|_{L^q}
\\
    &\quad
    +C e^{-c_\alpha t}\langle t\rangle^{\delta_p}
    \bigl\||\nabla|^{s_1}P_{>\alpha}g\bigr\|_{H_p^{\beta-1}}.
\end{aligned}
\]
\end{lemma}
\begin{Rmk}
The constants in Lemma~\ref{Lp Lq decay} are not asserted to be uniform as
\(\alpha\to0\). This is consistent with the fact that the diffusion phenomenon is
a genuinely damped effect and disappears in the undamped limit. In the sequel,
this lemma is applied only to the original equation with the fixed coefficient
\(\alpha>0\).
\end{Rmk}

We now return to the perturbative theory associated with the Strichartz spaces
introduced above. Let \(\pi_1(u,u_t):=u\) denote the projection onto the first
component. This is the standard long-time perturbation theorem for the
energy-critical damped wave equation, written in a form that is uniform for
damping coefficients \(a\in[0,\alpha]\).
\begin{lemma}[Long-time perturbation]
\label{lem:long-time-perturbation}
Let \(D\geq4\), \(I=[t_0,t_1]\), and \(0\leq a\leq\alpha\). Let
\(\widetilde u\) be an approximate solution to
\[
    \partial_{tt}\widetilde u-\Delta \widetilde u
    +a\partial_t\widetilde u
    =
    f(\widetilde u)+e
\]
on \(I\times\mathbb R^D\), where $ f(\widetilde u)=|\widetilde u|^{\frac4{D-2}}\widetilde u.$ Assume that $\|\widetilde u\|_{X_D(I)}\leq M.$
Then there exist constants
\[
    \varepsilon_0=\varepsilon_0(M)>0,\qquad
    C=C(M)>0,\qquad
    c_D>0,
\]
independent of \(a\in[0,\alpha]\), with the following property. If
\[
    \left\|
    \pi_1 S_a(t-t_0)
    \bigl(
        u_0-\widetilde u(t_0),\
        u_1-\partial_t\widetilde u(t_0)
    \bigr)
    \right\|_{X_D(I)}
    +
    \|e\|_{N_D(I)}
    \leq \varepsilon
    \leq \varepsilon_0,
\]
then there exists a solution \(u\) to
\[
    \partial_{tt}u-\Delta u+a\partial_tu=f(u)
\]
on \(I\), with initial data $(u(t_0),\partial_tu(t_0))=(u_0,u_1)$, 
such that
\[
    \|u-\widetilde u\|_{X_D(I)}
    +
    \|(u-\widetilde u,\partial_tu-\partial_t\widetilde u)\|
        _{L_t^\infty(I;H^1\times L^2)}
    \leq
    C\varepsilon^{c_D}.
\]
\end{lemma}
\begin{proof}
We recall the standard argument, emphasizing the uniformity in
\(a\in[0,\alpha]\). Set $w:=u-\widetilde u.$
Then \(w\) solves
\[
    \partial_{tt}w-\Delta w+a\partial_t w
    =
    f(\widetilde u+w)-f(\widetilde u)-e,
\]
with initial data
\[
    (w(t_0),\partial_t w(t_0))
    =
    (u_0-\widetilde u(t_0),\
    u_1-\partial_t\widetilde u(t_0)).
\]
By the homogeneous and inhomogeneous Strichartz estimates stated above, together
with the nonlinear estimates in the \(X_D(I)\)-\(N_D(I)\) framework, there exists
\(\eta=\eta(M)>0\) such that on any subinterval \(J\subset I\) satisfying $\|\widetilde u\|_{X_D(J)}\leq \eta,$
one has
\[
\begin{aligned}
    \|w\|_{X_D(J)}
    \lesssim{}&
    \left\|
    \pi_1 S_a(t-\inf J)
    \bigl(w(\inf J),\partial_t w(\inf J)\bigr)
    \right\|_{X_D(J)}
    +
    \|e\|_{N_D(J)}
\\
    &\quad
    +
    o_\eta(1)\|w\|_{X_D(J)} .
\end{aligned}
\]
Choosing \(\eta\) sufficiently small, the last term can be absorbed. Since $\|\widetilde u\|_{X_D(I)}\leq M,$
the interval \(I\) can be divided into \(N=N(M)\) subintervals on which the above
smallness condition holds. Iterating the short-time estimate over these
subintervals gives
\[
    \|w\|_{X_D(I)}
    +
    \|(w,\partial_t w)\|_{L_t^\infty(I;H^1\times L^2)}
    \leq
    C(M)\varepsilon^{c_D}.
\]
All constants are uniform for \(a\in[0,\alpha]\), because the Strichartz estimates
and the nonlinear estimates used in the argument are uniform in this range.
\end{proof}
\begin{Rmk}
The uniformity with respect to \(a\in[0,\alpha]\) is essential only for the
perturbative arguments involving rescaled profiles. Indeed, if a profile is
rescaled by a factor \(\lambda_n\), then the damping coefficient becomes
\(a_n=\alpha\lambda_n\). Thus small-scale profiles correspond to \(a_n\to0\),
and the perturbation theory must be compatible with the limiting undamped
energy-critical wave equation.
\end{Rmk}
Finally, we recall the decay property for the damped flow with the fixed damping
coefficient \(\alpha>0\). This result will be used only in the global-in-time
analysis, where no uniformity as \(\alpha\to0\) is required.
\begin{lemma}[Decay of finite Strichartz solutions]
\label{lem:inui-decay}
Let \(u\) be a global solution to \eqref{DNLW}
on \([0,\infty)\times\mathbb R^D\). Assume that $\|u\|_{S_D([0,\infty))}<\infty .$
Then
\[
    \lim_{t\to\infty}
    \left(
    \|u(t)\|_{H^1}
    +
    \|\partial_tu(t)\|_{L^2}
    \right)
    =0.
\]
\end{lemma}
\begin{Rmk}
In Lemma~\ref{lem:inui-decay}, the damping coefficient is the fixed coefficient
\(\alpha\) of the original equation. Therefore the possible loss of uniformity in
the linear decay estimates as \(a\to0\) is irrelevant for this application.
\end{Rmk}
   
		\subsection{Virial identities} We have the following virial identities.
	\begin{lemma}[Virial identity]\label{Virial identity} Let $\boldsymbol{u}(t)\in\mathcal{E}$ be a solution to (\ref{DNLW}) on an open time interval $I$ and $\rho: I\to (0,\infty)$ a Lipschitz function. Then for almost all $t\in I$, a direct computation shows
    \begin{equation*}\label{virial identity 3}
	\begin{aligned}
	&\frac{d}{dt}\left\langle \partial_t u(t)\mid \chi_{\rho(t)}\left(r\partial_ru(t)+\frac{D-2}{2}u(t)\right)\right\rangle\\
    =&-\int_0^{\infty}(\partial_t u(t,r))^2\chi_{\rho(t)}(r)r^{D-1}dr
	-\alpha \left\langle \partial_t u(t)\mid \chi_{\rho(t)}\left(r\partial_ru(t)+\frac{D-2}{2}u(t)\right)\right\rangle\\
	&+\Omega_{1,\rho(t)}(\boldsymbol{u}(t))+\frac{D-2}{2}\Omega_{2,\rho(t)}(\boldsymbol{u}(t));
	\end{aligned}
	\end{equation*}
	and 
	\begin{equation*}
	\begin{aligned}
	&\frac{d}{dt}\left\langle \partial_t u(t)\mid \chi_{\rho(t)}\left(r\partial_ru(t)+\frac{D}{2}u(t)\right)\right\rangle\\
    =& -\int_0^{\infty}\left[ (\partial_r u(t,r))^2-|u(t,r)|^{\frac{2D}{D-2}} \right]\chi_{\rho(t)}(r)r^{D-1}dr\\
    &-\alpha \left\langle \partial_t u(t)\mid \chi_{\rho(t)}\left(r\partial_ru(t)+\frac{D}{2}u(t)\right)\right\rangle
	+\Omega_{1,\rho(t)}(\boldsymbol{u}(t))+\frac{D}{2}\Omega_{2,\rho(t)}(\boldsymbol{u}(t)),
	\end{aligned}
	\end{equation*}
		where 
		\begin{equation*}
		\begin{aligned}
		\Omega_{1,\rho(t)}(\boldsymbol{u}(t)):=&-\frac{\rho'(t)}{\rho(t)}\int_0^{\infty}\partial_t u(t,r)r\partial_r u(t,r)(r\partial_r \chi)(r/\rho(t))r^{D-1}dr\\
		&-\frac{1}{2}\int_0^{\infty}\left((\partial_t u(t,r))^2+(\partial_r u(t,r))^2\right)(r\partial_r \chi)(r/\rho(t))r^{D-1}dr\\
		&+\frac{1}{2}\int_0^{\infty}\frac{D-2}{D}|u(t,r)|^{\frac{2D}{D-2}}(r\partial_r \chi)(r/\rho(t))r^{D-1}\,dr,\\[0.4em]
		\Omega_{2,\rho(t)}(\boldsymbol{u}(t)):=&-\frac{\rho'(t)}{\rho(t)}\int_0^{\infty}\partial_t u(t,r)u(t,r)(r\partial_r\chi)(r/\rho(t))r^{D-1}\,dr\\
		&-\int_0^{\infty}\partial_r u(t,r)\frac{u(t,r)}{r}(r\partial_r \chi)(r/\rho(t))\,r^{D-1}\,dr.
		\end{aligned}
		\end{equation*} 
	\end{lemma}
	
	\subsection{Multi-bubble configurations} In this section we study properties of finite energy maps near a multi-bubble configuration. First, we define the infinitesimal generators of the \(\dot H^1\)-invariant dilations by $\Lambda$ and we denote the \(L^2\)-invariant scaling generator by \(\underline\Lambda\).
	\begin{equation*}
	\Lambda:=r\partial_r +\frac{D-2}{2},\qquad\underline{\Lambda}:=r\partial_r+\frac{D}{2}.
	\end{equation*}
	We have 
	\begin{equation*}
	\Lambda W(r)=\left(\frac{D-2}{D}-\frac{r^2}{2D}\right)\left(1+\frac{r^2}{D(D-2)}\right)^{-\frac{D}{2}}.
	\end{equation*}
	Note that both $W$ and $\Lambda W$ satisfy
	\begin{equation*}
	|W(r)|,|\Lambda W(r)|\simeq 1 \text{ if } r\leq 1, \text{ and }|W(r)|,|\Lambda W(r)|\simeq r^{-D+2} \text{ if } r\geq 1.
	\end{equation*}
	Next, we discuss the spectral properties. The operator $\mathcal{L}_{\mathcal{W}}$ obtained by linearization of (\ref{DNLW}) about an $M$-bubble configuration $\boldsymbol{\mathcal{W}}(\Vec{\iota},\Vec{\lambda})$ is given by 
	\begin{equation*}
	\mathcal{L}_{\mathcal{W}}:=D^2E_P(\boldsymbol{\mathcal{W}}(\Vec{\iota},\Vec{\lambda}))g=-\Delta g-f'(\boldsymbol{\mathcal{W}}(\Vec{\iota},\Vec{\lambda}))g
	\end{equation*}
	where $f(z):=|z|^{\frac{4}{D-2}}z$ and $f'(z)=\frac{D+2}{D-2}|z|^{\frac{4}{D-2}}$. Given $\boldsymbol{g}=(g,\dot{g})\in \mathcal{E}$,
	\begin{equation*}
	\left\langle D^2E(\boldsymbol{\mathcal{W}}(\Vec{\iota},\Vec{\lambda}))\boldsymbol{g}\mid \boldsymbol{g} \right\rangle=\int_0^{\infty}\left(\dot{g}(r)^2+(\partial_r g(r))^2-f'(\boldsymbol{\mathcal{W}}(\Vec{\iota},\Vec{\lambda}))g(r)^2\right)r^{D-1}dr.
	\end{equation*}
	In the one bubble case, we consider $\mathcal{W}=W_{\lambda}$  and use the notation,
	\begin{equation*}
	\mathcal{L}_{\lambda}=-\Delta-f'(W_{\lambda}).
	\end{equation*}
	In particular, we write $\mathcal{L}:=\mathcal{L}_1$. Importantly,
	\begin{equation*}
	\mathcal{L}(\Lambda W)=\frac{d}{d\lambda} |_{\lambda=1}(-\Delta W_{\lambda}-f(W_{\lambda}))=0.
	\end{equation*}
	Thus, if \(D\ge5\), \(\Lambda W\in L^2\) is the zero mode of \(\mathcal L\);
if \(D=4\), it is a threshold resonance. In fact, in the radial case  $\left\{f\in \dot{H}^1_{rad} : \mathcal{L}f=0 \right\}=\operatorname{span}\left\{\Lambda W\right\}$. In addition to this fact, it was shown in \cite{DM 08} that $\mathcal{L}$ has a unique negative simple eigenvalue that we denote by $-\kappa^2<0$ (with $\kappa>0$). We denote the associated eigenfunction by $\mathcal{Y}$ normalized in $L^2$ so that $\left\|\mathcal{Y}\right\|_{L^2}=1$. By elliptic regularity $\mathcal{Y}$ is smooth, and by Agmon estimates it decays exponentially. Using that $\mathcal{L}$ is symmetric we deduce that $\left\langle \mathcal{Y}\mid \Lambda W \right\rangle=0.$ Let 
	\begin{equation*}
	\boldsymbol{\mathcal{Y}}^-:=(\frac{1}{\kappa}\mathcal{Y},-\mathcal{Y}), \text{ }\boldsymbol{\mathcal{Y}}^+:=(\frac{1}{\kappa}\mathcal{Y},\mathcal{Y})\quad\text{and }{J}=\left(\begin{array}{ccc}
	0&1\\
	-1&0
	\end{array}\right).
	\end{equation*}
     Then we define
	\begin{equation*}
	\boldsymbol{\alpha}^-=\frac{\kappa}{2}J\boldsymbol{\mathcal{Y}}^+=\frac{1}{2}(\kappa\mathcal{Y},-\mathcal{Y}),\quad\boldsymbol{\alpha}^+:=-\frac{\kappa}{2}J\boldsymbol{\mathcal{Y}}^-=\frac{1}{2}(\kappa\mathcal{Y},\mathcal{Y}).
	\end{equation*}
	Recalling that 
	\begin{equation*}
	J \circ D^2E(\boldsymbol{W})=\left(\begin{array}{ccc}
	0&Id\\
	-\mathcal{L}&0
	\end{array}\right)
	\end{equation*}
	we see that 
	\begin{equation*}
	J \circ D^2E(\boldsymbol{W}) \boldsymbol{\mathcal{Y}}^-=-\kappa \boldsymbol{\mathcal{Y}}^-,\text{ and }J \circ D^2E(\boldsymbol{W}) \boldsymbol{\mathcal{Y}}^+=\kappa \boldsymbol{\mathcal{Y}}^+
	\end{equation*}
	and for all $\boldsymbol{h} \in \mathcal{E}$,
	\begin{equation*}
	\left\langle \boldsymbol{\alpha}^-\mid J \circ D^2E(\boldsymbol{W})\boldsymbol{h} \right\rangle=-\kappa \left\langle \boldsymbol{\alpha}^-\mid \boldsymbol{h}  \right\rangle,\text{ } \left\langle \boldsymbol{\alpha}^+\mid J \circ D^2E(\boldsymbol{W})\boldsymbol{h}  \right\rangle=\kappa \left\langle \boldsymbol{\alpha}^+\mid \boldsymbol{h}  \right\rangle.
	\end{equation*}
	We view $\boldsymbol{\alpha}^{\pm}$ as linear forms on $\mathcal{E}$ and note that $\left\langle \boldsymbol{\alpha}^-\mid \boldsymbol{\mathcal{Y}}^-\right\rangle=\left\langle \boldsymbol{\alpha}^+\mid \boldsymbol{\mathcal{Y}}^+\right\rangle=1$, $\left\langle \boldsymbol{\alpha}^-\mid \boldsymbol{\mathcal{Y}}^+\right\rangle=\left\langle \boldsymbol{\alpha}^+\mid \boldsymbol{\mathcal{Y}}^-\right\rangle=0$. For $\lambda>0$ similarly we define,
	\begin{equation*}
	\boldsymbol{\mathcal{Y}}^-_{\lambda}:=(\frac{1}{\kappa}\mathcal{Y}_{\lambda},-\mathcal{Y}_{\underline{\lambda}}), \text{ }\boldsymbol{\mathcal{Y}}^+_{\lambda}:=(\frac{1}{\kappa}\mathcal{Y}_{\lambda},\mathcal{Y}_{\underline{\lambda}})
	\end{equation*}
	and
	\begin{equation}\label{eq:alpha-pm}
	\boldsymbol{\alpha}^-_{\lambda}=\frac{\kappa}{2\lambda}J\boldsymbol{\mathcal{Y}}^+_{{\lambda}}=\frac{1}{2}(\frac{\kappa}{\lambda}\mathcal{Y}_{\underline{\lambda}},-\mathcal{Y}_{\underline{\lambda}}),\text{ }\boldsymbol{\alpha}^+_{{\lambda}}:=-\frac{\kappa}{2\lambda}J\boldsymbol{\mathcal{Y}}^-_{\lambda}=\frac{1}{2}(\frac{\kappa}{\lambda}\mathcal{Y}_{\underline{\lambda}},\mathcal{Y}_{\underline{\lambda}}).  
	\end{equation}
	With these scalings, we have $\left\langle \boldsymbol{\alpha}^-_{\lambda}\mid \boldsymbol{\mathcal{Y}}^-_{\lambda}\right\rangle=\left\langle \boldsymbol{\alpha}^+_{\lambda}\mid \boldsymbol{\mathcal{Y}}^+_{\lambda}\right\rangle=1$. We have
	\begin{equation*}
	J \circ D^2E(\boldsymbol{W}_{\lambda}) \boldsymbol{\mathcal{Y}}^-_{\lambda}=-\frac{\kappa}{\lambda}\boldsymbol{\mathcal{Y}}^-_{\lambda},\text{ and }J \circ D^2E(\boldsymbol{W}_{\lambda}) \boldsymbol{\mathcal{Y}}^+_{\lambda}=\frac{\kappa}{\lambda} \boldsymbol{\mathcal{Y}}^+_{\lambda}
	\end{equation*}
	and for all $\boldsymbol{h} \in \mathcal{E}$.
	\begin{equation*}
	\left\langle \boldsymbol{\alpha}^-_{\lambda}\mid J \circ D^2E(\boldsymbol{W}_{\lambda})\boldsymbol{h}  \right\rangle=-\frac{\kappa}{\lambda} \left\langle \boldsymbol{\alpha}^-_{\lambda}\mid \boldsymbol{h}  \right\rangle,\text{ } \left\langle \boldsymbol{\alpha}^+_{\lambda}\mid J \circ D^2E(\boldsymbol{W}_{\lambda})\boldsymbol{h}  \right\rangle=\frac{\kappa}{\lambda} \left\langle \boldsymbol{\alpha}^+_{\lambda}\mid \boldsymbol{h}  \right\rangle.
	\end{equation*}
	
	We next choose the test function used in the orthogonality conditions of the
static modulation lemma. If \(D\ge7\), we set $ Z:=\Lambda W .$
Then \(Z\in \dot H^{-1}\), and
\[
    \langle Z,\Lambda W\rangle=\|\Lambda W\|_{L^2}^2>0,
    \qquad
    \langle Z,Y\rangle=0 .
\]
Here the second identity follows from the symmetry of \(L\), since
\(L\Lambda W=0\) and \(LY=-\kappa^2Y\).
In dimensions \(4\le D\le6\), the function \(\Lambda W\) cannot be used as a
test function in the same way. We therefore fix once and for all
\(Z\in C_0^\infty(0,\infty)\) such that
\[
    \langle Z,\Lambda W\rangle>0,
    \qquad
    \langle Z,Y\rangle=0 .
\]
Such a choice is possible by density and the identity
\(\langle Y,\Lambda W\rangle=0\). For \(\lambda>0\), we denote by \(Z_\lambda\)
the corresponding \(L^2\)-scaled function.

\subsection{Static multi-bubble estimates}
The estimates in this subsection are purely elliptic and do not involve the
damping term. We state them in their natural range \(D\ge4\), following the
corresponding static tools for the energy-critical wave equation. In the proof of
Theorem~\ref{soliton resolution}, only the case \(D\ge6\) will be used.
We record the following localized coercivity estimate around the ground state.
\begin{lemma}[Localized coercivity around \(W\)]
\label{lem:localized-coercivity}
Let \(D\ge4\). There exist constants \(c\in(0,1/2)\) and \(C>0\) such that, for
all \(\boldsymbol g=(g,0)\in\mathcal E\),
\begin{equation*}
    \langle Lg,g\rangle
    \ge
    c\|\boldsymbol g\|_{\mathcal E}^2
    -
    C\langle Z,g\rangle^2
    -
    C\langle Y,g\rangle^2 .
\end{equation*}
Moreover, if \(R>0\) is sufficiently large, then
\begin{equation*}
\begin{aligned}
    &(1-2c)\int_0^R |\partial_rg(r)|^2r^{D-1}\,dr
    +
    c\int_R^\infty |\partial_rg(r)|^2r^{D-1}\,dr      \\
    &\qquad
    -
    \int_0^\infty f'(W(r))g(r)^2r^{D-1}\,dr
    \ge
    -
    C\langle Z,g\rangle^2
    -
    C\langle Y,g\rangle^2 .
\end{aligned}
\end{equation*}
\end{lemma}
The next estimate gives the energy expansion for a separated multi-bubble.
\begin{lemma}[Energy expansion for separated bubbles]
\label{lem:energy-expansion-separated-bubbles}
Let \(D\ge4\) and \(M\in\mathbb N\). For every \(\theta>0\), there exists
\(\eta>0\) with the following property. Let
\[
    \boldsymbol W(\boldsymbol\iota,\boldsymbol\lambda)
    :=
    \sum_{j=1}^M \iota_j\boldsymbol W_{\lambda_j}
\]
be an \(M\)-bubble configuration satisfying
\[
    \sum_{j=1}^{M-1}
    \left(\frac{\lambda_j}{\lambda_{j+1}}\right)^{\frac{D-2}{2}}
    \le\eta .
\]
Then
\begin{equation*}
\left|
    E(\boldsymbol W(\boldsymbol\iota,\boldsymbol\lambda))
    -
    M E(\boldsymbol W)
    +
    \frac{(D(D-2))^{\frac D2}}{D}
    \sum_{j=1}^{M-1}
    \iota_j\iota_{j+1}
    \left(\frac{\lambda_j}{\lambda_{j+1}}\right)^{\frac{D-2}{2}}
\right|
\le
\theta
\sum_{j=1}^{M-1}
\left(\frac{\lambda_j}{\lambda_{j+1}}\right)^{\frac{D-2}{2}} .
\end{equation*}
Moreover, there exists \(C>0\) such that, for all
\(\boldsymbol g=(g,0)\in\mathcal E\),
\begin{equation*}
    \left|
        \langle DE(\boldsymbol W(\boldsymbol\iota,\boldsymbol\lambda)),
        \boldsymbol g\rangle
    \right|
    \le
    C\|\boldsymbol g\|_{\mathcal E}
    \sum_{j=1}^{M-1}
    \left(\frac{\lambda_j}{\lambda_{j+1}}\right)^{\frac{D-2}{2}} .
\end{equation*}
\end{lemma}
We next define the static distance to the \(M\)-bubble manifold. For
\(\boldsymbol v\in\mathcal E\), set
\begin{equation*}
    d_M(\boldsymbol v)
    :=
    \inf_{\boldsymbol\iota,\boldsymbol\lambda}
    \left(
        \left\|
            \boldsymbol v
            -
            \sum_{j=1}^M\iota_j\boldsymbol W_{\lambda_j}
        \right\|_{\mathcal E}^2
        +
        \sum_{j=1}^{M-1}
        \left(\frac{\lambda_j}{\lambda_{j+1}}\right)^{\frac{D-2}{2}}
    \right)^{1/2},
\end{equation*}
where the infimum is taken over
\[
    \boldsymbol\iota=(\iota_1,\ldots,\iota_M)\in\{-1,1\}^M,
    \qquad
    \boldsymbol\lambda=(\lambda_1,\ldots,\lambda_M)\in(0,\infty)^M .
\]
\begin{lemma}[Static modulation lemma]
\label{lem:static-modulation}
Let \(D\ge4\) and \(M\in\mathbb N\). There exist \(\eta>0\) and \(C>0\) with the
following property. Let \(0<\theta<1\), and let \(\boldsymbol v\in\mathcal E\)
satisfy
\[
    d_M(\boldsymbol v)\le\eta,
    \qquad
    E(\boldsymbol v)\le M E(\boldsymbol W)+\theta^2 .
\]
Then there exist unique signs and scales 
\[
    \boldsymbol\iota=(\iota_1,\ldots,\iota_M)\in\{-1,1\}^M,\qquad \boldsymbol\lambda=(\lambda_1,\ldots,\lambda_M)\in(0,\infty)^M,
\]
and a remainder \(\boldsymbol g=(g,\dot g)\in\mathcal E\) such that
\begin{equation*}
    \boldsymbol v
    =
    \sum_{j=1}^M\iota_j\boldsymbol W_{\lambda_j}
    +
    \boldsymbol g,
    \qquad
    \langle Z_{\lambda_j},g\rangle=0,
    \quad 1\le j\le M.
\end{equation*}
Moreover,
\begin{equation*}
    d_M(\boldsymbol v)^2
    \le
    \|\boldsymbol g\|_{\mathcal E}^2
    +
    \sum_{j=1}^{M-1}
    \left(\frac{\lambda_j}{\lambda_{j+1}}\right)^{\frac{D-2}{2}}
    \le
    C d_M(\boldsymbol v)^2 .
\end{equation*}
If
\[
    a_j^\pm:=\langle \boldsymbol\alpha_{\lambda_j}^\pm,\boldsymbol g\rangle,
    \qquad
    S:=\{j\in\{1,\ldots,M-1\}:\iota_j=\iota_{j+1}\},
\]
then
\begin{equation*}
    \|\boldsymbol g\|_{\mathcal E}^2
    +
    \sum_{j\notin S}
    \left(\frac{\lambda_j}{\lambda_{j+1}}\right)^{\frac{D-2}{2}}
    \le
    C\max_{j\in S}
    \left(\frac{\lambda_j}{\lambda_{j+1}}\right)^{\frac{D-2}{2}}
    +
    C\max_{1\le i\le M,\ \pm}|a_i^\pm|^2
    +
    C\theta^2 .
\end{equation*}
\end{lemma}
Finally, we record the leading adjacent-bubble interaction.
\begin{lemma}[Adjacent-bubble interaction]
\label{lem:adjacent-bubble-interaction}
Let \(D\ge4\) and \(M\in\mathbb N\). For every \(\theta>0\), there exists
\(\eta>0\) with the following property. Let
\(\boldsymbol W(\boldsymbol\iota,\boldsymbol\lambda)\) be an \(M\)-bubble
configuration satisfying
\[
    \sum_{j=0}^{M}
    \left(\frac{\lambda_j}{\lambda_{j+1}}\right)^{\frac{D-2}{2}}
    \le\eta,
    \qquad
    \lambda_0:=0,\quad \lambda_{M+1}:=\infty .
\]
Define the nonlinear interaction error
\begin{equation*}
    \mathcal I(\boldsymbol\iota,\boldsymbol\lambda)
    :=
    f\left(\sum_{j=1}^M\iota_j W_{\lambda_j}\right)
    -
    \sum_{j=1}^M\iota_j f(W_{\lambda_j}).
\end{equation*}
Then, for \(1\le j\le M\),
\begin{equation*}
\begin{aligned}
&\left|
    \left\langle
        \Lambda W_{\lambda_j},
        \mathcal I(\boldsymbol\iota,\boldsymbol\lambda)
    \right\rangle
    -
    \iota_{j+1}
    \frac{D-2}{2D}(D(D-2))^{\frac D2}
    \left(\frac{\lambda_j}{\lambda_{j+1}}\right)^{\frac{D-2}{2}}
\right. \\
&\left.
    \qquad\qquad
    +
    \iota_{j-1}
    \frac{D-2}{2D}(D(D-2))^{\frac D2}
    \left(\frac{\lambda_{j-1}}{\lambda_j}\right)^{\frac{D-2}{2}}
\right|  \\
&\qquad\le
    \theta
    \left[
        \left(\frac{\lambda_{j-1}}{\lambda_j}\right)^{\frac{D-2}{2}}
        +
        \left(\frac{\lambda_j}{\lambda_{j+1}}\right)^{\frac{D-2}{2}}
    \right],
\end{aligned}
\end{equation*}
where the terms with \(j=0\) or \(j=M+1\) are understood to be zero.
\end{lemma}

	\section{Profile decomposition}\label{profile decomposition}
    \subsection{Linear profile decomposition}
We first introduce the notation for the energy-critical scaling. For
\(\lambda>0\), define
\[
    D_\lambda f(x)
    :=
    \lambda^{-\frac{D-2}{2}}f\left(\frac{x}{\lambda}\right),
    \qquad
    \dot D_\lambda g(x)
    :=
    \lambda^{-\frac D2}g\left(\frac{x}{\lambda}\right).
\]
Thus \(D_\lambda\) preserves the \(\dot H^1\)-norm and
\(\dot D_\lambda\) preserves the \(L^2\)-norm. We denote by \(S_0(t)\) the free
linear wave flow associated with
\[
    \partial_{tt}u-\Delta u=0.
\]
Throughout this section we assume \(D\geq5\).
\begin{prop}[Radial linear profile decomposition]
\label{LPF for DNLW}
Let  $\{(v_{0,n},v_{1,n})\}_{n\geq1}$
be a bounded sequence in
\(H^1_{\mathrm{rad}}(\mathbb R^D)\times L^2_{\mathrm{rad}}(\mathbb R^D)\).
After passing to a subsequence, there exist \(J_0\in\{0,1,\ldots,\infty\}\) and,
for each \(1\leq j<J_0\), a nonzero profile of one of the following two types.

\medskip
\noindent
\textup{(i) Damped profile.}
There is at most one profile of this type. It is given by
\[
    (\phi^j,\psi^j)
    \in
    H^1_{\mathrm{rad}}(\mathbb R^D)\times L^2_{\mathrm{rad}}(\mathbb R^D),
\]
and we set
\[
    (V_{0,n}^j,V_{1,n}^j):=(\phi^j,\psi^j).
\]
\medskip
\noindent
\textup{(ii) Small-scale wave profiles.}
There exist $  (\phi^j,\psi^j)
    \in
    \dot H^1_{\mathrm{rad}}(\mathbb R^D)\times L^2_{\mathrm{rad}}(\mathbb R^D)$, a sequence of scales $ \lambda_n^j\to0,$
and a sequence of rescaled times \(s_n^j\in[0,\infty)\) such that, after absorbing
finite limits into the profile, either
\[
    s_n^j\equiv0
    \qquad\text{or}\qquad
    s_n^j\to+\infty.
\]
Let \(U_L^j\) be the free linear wave with initial data
\[
    (U_L^j(0),\partial_tU_L^j(0))=(\phi^j,\psi^j).
\]
Fix \(0<\theta<1\), and set
\[
    P_n^j:=P_{>(\lambda_n^j)^\theta}.
\]
Then the corresponding profile at time \(t=0\) is defined by
\[
    V_{0,n}^j
    :=
    D_{\lambda_n^j}P_n^j U_L^j(-s_n^j),
    \qquad
    V_{1,n}^j
    :=
    \dot D_{\lambda_n^j}P_n^j \partial_tU_L^j(-s_n^j).
\]
\medskip
For every \(1\leq J<J_0\), we have the decomposition
\[
    v_{0,n}
    =
    \sum_{j=1}^J V_{0,n}^j+w_{0,n}^J,
    \qquad
    v_{1,n}
    =
    \sum_{j=1}^J V_{1,n}^j+w_{1,n}^J.
\]
The parameters of distinct small-scale wave profiles are asymptotically
orthogonal: if \(j\neq k\) are both of type \textup{(ii)}, then
\[
    \frac{\lambda_n^j}{\lambda_n^k}
    +
    \frac{\lambda_n^k}{\lambda_n^j}
    +
    \frac{|t_n^j-t_n^k|}{\lambda_n^j}
    \to+\infty,\qquad \text{where }t_n^j=\lambda_n^js_n^j. 
\]
Moreover, for every fixed \(J\), the energy norms decouple:
\[
\begin{aligned}
    \|v_{0,n}\|_{H^1}^2+\|v_{1,n}\|_{L^2}^2
    =
    \sum_{j=1}^J
    \left(
        \|V_{0,n}^j\|_{H^1}^2+\|V_{1,n}^j\|_{L^2}^2
    \right)
    +
    \|w_{0,n}^J\|_{H^1}^2+\|w_{1,n}^J\|_{L^2}^2
    +o_n(1).
\end{aligned}
\]
The remainders are asymptotically orthogonal to all previously extracted profiles.
If \(j\leq J\) is the damped profile, then
\[
    (w_{0,n}^J,w_{1,n}^J)
    \rightharpoonup0
    \qquad
    \text{in }H^1\times L^2.
\]
If \(j\leq J\) is a small-scale wave profile, then
\[
    S_0(s_n^j)
    \left(
        D_{\lambda_n^j}^{-1}w_{0,n}^J,\
        \dot D_{\lambda_n^j}^{-1}w_{1,n}^J
    \right)
    \rightharpoonup0
    \qquad
    \text{in }\dot H^1\times L^2.
\]
Finally, the damped linear evolution of the remainder vanishes in the critical
Strichartz norm:
\[
    \lim_{J\to J_0}
    \limsup_{n\to\infty}
    \left\|
        \pi_1S_\alpha(t)(w_{0,n}^J,w_{1,n}^J)
    \right\|_{S_D([0,\infty))}
    =0.
\]
\end{prop}
\begin{Rmk}
The lack of scaling invariance is responsible for the one-sided nature of
Proposition~\ref{LPF for DNLW}, namely the upper bound on the scales
\(\lambda_n^j\). Similar one-sided profile decompositions appear in other
scaling-broken problems, for instance nonlinear Schr\"odinger equations on curved
or product spaces \cite{IPS,IS,HP, HPTV, CGYZ, YZ}, equations with potentials \cite{LZ}, and
Klein--Gordon or Schr\"odinger equations with additional lower-order terms
\cite{Nak, KSV, CGM, TVZ, CML,CMY, MXZ}.
\end{Rmk}

The proof of Proposition~\ref{LPF for DNLW} follows the usual induction scheme
for profile decompositions. The point specific to the damped equation is the
extraction of the first profile. A nontrivial Strichartz norm for the linear
damped flow yields concentration at a dyadic frequency and at a forward time.
The damping excludes concentration at infinite physical time, while at small
scales the rescaled damping coefficient tends to zero and the limiting profile is
governed by the free wave equation. We first isolate this mechanism through a
refined Strichartz inequality and a weak concentration lemma.

\begin{lemma}[Refined Strichartz inequality]
\label{lem:refined-strichartz-damped}
Let \(I\subset[0,\infty)\) be a time interval. For any
\[
    (f,g)\in H^1(\mathbb R^D)\times L^2(\mathbb R^D),
\]
we have
\[
    \|\pi_1S_\alpha(t)(f,g)\|_{S_D(I)}
    \lesssim
    \|(f,g)\|_{H^1\times L^2}^{\frac{D-2}{D-1}}
    \left(
        \sup_{N\in 2^{\mathbb Z}}
        N^{-\frac{D-2}{2}}
        \|P_N\pi_1S_\alpha(t)(f,g)\|_{L^\infty_{t,x}(I\times\mathbb R^D)}
    \right)^{\frac1{D-1}} .
\]
\end{lemma}
\begin{proof}
Set $ u(t):=\pi_1S_\alpha(t)(f,g)$. We also write
\[
    p:=\frac{2(D+1)}{D-2},
    \qquad
    q:=\frac{2(D+1)}{D-1}.
\]
Thus \(S_D(I)=L^p_{t,x}(I\times\mathbb R^D)\). By the Littlewood--Paley square
function estimate and the standard dyadic expansion, we have
\[
    \|u\|_{L^p_{t,x}}^p
    \lesssim
    \sum_{N\geq M}
    \int_{I\times\mathbb R^D}
    |P_Nu|^q |P_Mu|^{p-q}\,dxdt .
\]
Using the \(L^\infty_{t,x}\)-norm for the lower frequency factor, we obtain
\[
\begin{aligned}
    \|u\|_{L^p_{t,x}}^p
    &\lesssim
    \sum_{N\geq M}
    \|P_Nu\|_{L^q_{t,x}}^q
    \|P_Mu\|_{L^\infty_{t,x}}^{p-q}.
\end{aligned}
\]
Let
\[
    A:=
    \sup_{L\in2^{\mathbb Z}}
    L^{-\frac{D-2}{2}}
    \|P_Lu\|_{L^\infty_{t,x}}.
\]
Then $ \|P_Mu\|_{L^\infty_{t,x}}
    \leq
    A M^{\frac{D-2}{2}},$
and hence
\[
\begin{aligned}
    \|u\|_{L^p_{t,x}}^p
    &\lesssim
    A^{p-q}
    \sum_{N\geq M}
    M^{\frac{D-2}{2}(p-q)}
    \|P_Nu\|_{L^q_{t,x}}^q .
\end{aligned}
\]
Since
\[
    \frac{D-2}{2}(p-q)
    =
    \frac{D+1}{D-1}
    =
    \frac q2,
\]
summing over \(M\leq N\) gives
\[
    \|u\|_{L^p_{t,x}}^p
    \lesssim
    A^{p-q}
    \sum_N
    N^{q/2}\|P_Nu\|_{L^q_{t,x}}^q .
\]
It remains to bound the last sum by the energy norm of the initial data. For the
pair \((q,q)\), the homogeneous Strichartz estimate has regularity $ \gamma=\frac12.$ Therefore, applying the homogeneous Strichartz estimate to the dyadic piece
\(P_Nu\), we get
\[
\begin{aligned}
    \|P_Nu\|_{L^q_{t,x}}
    &\lesssim
    \|\langle\nabla\rangle^{1/2}P_N f\|_{L^2}
    +
    \|\langle\nabla\rangle^{-1/2}P_N(\alpha f+g)\|_{L^2}.
\end{aligned}
\]
Multiplying by \(N^{1/2}\), and using that \(\alpha>0\) is fixed, we obtain
\[
    N^{1/2}\|P_Nu\|_{L^q_{t,x}}
    \lesssim
    \langle N\rangle \|P_Nf\|_{L^2}
    +
    \|P_Ng\|_{L^2}.
\]
Consequently, since \(q>2\),
\[
\begin{aligned}
    \sum_N
    N^{q/2}\|P_Nu\|_{L^q_{t,x}}^q
    &\lesssim
    \sum_N
    \left(
        \langle N\rangle \|P_Nf\|_{L^2}
        +
        \|P_Ng\|_{L^2}
    \right)^q
\\
    &\lesssim
    \left(
        \sum_N
        \langle N\rangle^2\|P_Nf\|_{L^2}^2
        +
        \sum_N
        \|P_Ng\|_{L^2}^2
    \right)^{q/2}
\\
    &\lesssim
    \|(f,g)\|_{H^1\times L^2}^q.
\end{aligned}
\]
Combining the above estimates yields
\[
    \|u\|_{L^p_{t,x}}^p
    \lesssim
    A^{p-q}
    \|(f,g)\|_{H^1\times L^2}^q.
\]
Finally, since
\[
    \frac q p=\frac{D-2}{D-1},
    \qquad
    \frac{p-q}{p}=\frac1{D-1},
\]
taking the \(p\)-th root gives
\[
    \|u\|_{S_D(I)}
    \lesssim
    \|(f,g)\|_{H^1\times L^2}^{\frac{D-2}{D-1}}
    A^{\frac1{D-1}},
\]
which is the desired estimate.
\end{proof}
The refined Strichartz inequality shows that a non-small critical norm forces a
large dyadic \(L^\infty_{t,x}\) component. The next lemma converts this dyadic
concentration into an actual weak profile. In doing so, we distinguish three
frequency regimes. The low-frequency regime is excluded by the \(H^1\)-bound,
the fixed-frequency regime gives a damped profile, and the high-frequency regime
gives a small-scale free wave profile.
\begin{lemma}[Weak concentration]
\label{lem:weak-concentration}
Let $\{(f_n,g_n)\}$ be a bounded sequence in
\(H^1_{\mathrm{rad}}(\mathbb R^D)\times L^2_{\mathrm{rad}}(\mathbb R^D)\). Assume
that
\[
    \|(f_n,g_n)\|_{H^1\times L^2}\leq A
\]
and
\[
    \limsup_{n\to\infty}
    \|\pi_1S_\alpha(t)(f_n,g_n)\|_{S_D([0,\infty))}
    \geq \varepsilon>0.
\]
Then, after passing to a subsequence, one of the following two alternatives holds.
\medskip
\noindent
\textup{(i) Fixed-scale damped concentration.}
There exists a nonzero pair
\[
    (\phi,\psi)\in H^1_{\mathrm{rad}}(\mathbb R^D)\times L^2_{\mathrm{rad}}(\mathbb R^D)
\]
such that, after absorbing a finite concentration time into the definition of the
profile,
\[
    (f_n,g_n)\rightharpoonup(\phi,\psi)
    \qquad
    \text{weakly in }H^1\times L^2.
\]
\medskip
\noindent
\textup{(ii) Small-scale wave concentration.}
There exist a sequence of scales $\lambda_n\to0,$
a sequence of rescaled times \(s_n\in[0,\infty)\), and a nonzero pair
\[
    (\phi,\psi)\in \dot H^1_{\mathrm{rad}}(\mathbb R^D)\times L^2_{\mathrm{rad}}(\mathbb R^D)
\]
such that
\[
    S_0(s_n)
    \left(
        D_{\lambda_n}^{-1}f_n,\,
        \dot D_{\lambda_n}^{-1}g_n
    \right)
    \rightharpoonup
    (\phi,\psi)
    \qquad
    \text{weakly in }\dot H^1\times L^2.
\]
Moreover, after passing to a further subsequence, either
\[
    s_n\equiv0
    \qquad\text{or}\qquad
    s_n\to+\infty.
\]
\end{lemma}
\begin{proof}
Set
\[
    u_n(t):=\pi_1S_\alpha(t)(f_n,g_n).
\]
By Lemma~\ref{lem:refined-strichartz-damped}, we have
\[
    \varepsilon
    \lesssim
    A^{\frac{D-2}{D-1}}
    \left(
        \sup_{N\in2^{\mathbb Z}}
        N^{-\frac{D-2}{2}}
        \|P_Nu_n\|_{L^\infty_{t,x}([0,\infty)\times\mathbb R^D)}
    \right)^{\frac1{D-1}} .
\]
Hence there exist dyadic numbers \(N_n\in2^{\mathbb Z}\), times \(t_n\geq0\), and
radii \(r_n\geq0\) such that
\begin{equation}\label{ineq:refine stri give}
     N_n^{-\frac{D-2}{2}}
    |P_{N_n}u_n(t_n,r_n)|
    \gtrsim
    c(\varepsilon,A)>0.  
\end{equation}
We first record two standard consequences of radiality and of the damped
propagator. Since \(u_n(t)\) is radial, the radial Bernstein estimate implies that
\[
    N^{-\frac{D-2}{2}}|P_N h(r)|
    \lesssim
    (1+Nr)^{-\frac{D-1}{2}}
    \|h\|_{\dot H^1}
\]
for radial \(h\). Applying this estimate to \(h=u_n(t_n)\), and using the
boundedness of the linear flow in the energy space, (\ref{ineq:refine stri give}) implies
\begin{equation}\label{ineq:N and r}
  N_n r_n \lesssim 1.   
\end{equation}
Thus the concentration point remains in a bounded region after rescaling by the
frequency \(N_n\).
We next exclude the low-frequency case \(N_n\to0\). Indeed, by Bernstein and the
energy bound,
\[
    N_n^{-\frac{D-2}{2}}\|P_{N_n}u_n(t_n)\|_{L^\infty}
    \lesssim
    N_n
    \|(u_n(t_n),\partial_tu_n(t_n))\|_{H^1\times L^2}
    \lesssim
    N_n A,
\]
which tends to \(0\) if \(N_n\to0\). This contradicts (\ref{ineq:refine stri give}). Hence, after passing
to a subsequence, either
\[
    N_n\sim1
    \qquad\text{or}\qquad
    N_n\to+\infty.
\]
Assume first that \(N_n\sim1\). We claim that \(t_n\) is bounded. Indeed, suppose
by contradiction that \(t_n\to+\infty\). We first prove the following fixed
frequency decay estimate. Let \(2<p<\infty\). Then there exist constants
\(\sigma=\sigma(D,p)>0\), \(C>0\), and \(c_\alpha>0\), depending on the fixed
damping coefficient \(\alpha\), such that for every dyadic \(N\sim1\),
\begin{equation}\label{ineq:fixed frequency decay}
    \|P_N\pi_1S_\alpha(t)(f,g)\|_{L^\infty}
    \leq
    C\langle t\rangle^{-\sigma}
    \|(f,g)\|_{H^1\times L^2}
    +
    Ce^{-c_\alpha t}\langle t\rangle^{\delta_p}
    \|(f,g)\|_{H^1\times L^2}. 
\end{equation}
To see this, recall that
\[
    \pi_1S_\alpha(t)(f,g)
    =
    \partial_t\mathcal K_\alpha(t)f
    +
    \mathcal K_\alpha(t)(\alpha f+g).
\]
By Bernstein's inequality, since \(N\sim1\),
\[
    \|P_N h\|_{L^\infty}
    \lesssim
    \|P_Nh\|_{L^p}.
\]
Applying Lemma \ref{Lp Lq decay} with \(q=2\), \(s_1=s_2=0\), we obtain
\[
\begin{aligned}
    \|P_N\mathcal K_\alpha(t)(\alpha f+g)\|_{L^p}
    \lesssim{}&
    \langle t\rangle^{-\frac D2(\frac12-\frac1p)}
    \|P_{\leq\alpha}P_N(\alpha f+g)\|_{L^2}
\\
    &+
    e^{-c_\alpha t}\langle t\rangle^{\delta_p}
    \|P_{>\alpha}P_N(\alpha f+g)\|_{H_p^{\beta-1}},
\end{aligned}
\]
and
\[
\begin{aligned}
    \|P_N\partial_t\mathcal K_\alpha(t)f\|_{L^p}
    \lesssim{}&
    \langle t\rangle^{-\frac D2(\frac12-\frac1p)-1}
    \|P_{\leq\alpha}P_N f\|_{L^2}
\\
    &+
    e^{-c_\alpha t}\langle t\rangle^{\delta_p}
    \|P_{>\alpha}P_N f\|_{H_p^{\beta-1}} .
\end{aligned}
\]
Since \(N\sim1\) and \(\alpha>0\) is fixed, all Sobolev norms appearing on the
right-hand side are controlled by $\|(f,g)\|_{H^1\times L^2}$.
Thus (\ref{ineq:fixed frequency decay}) follows, for instance with
\[
    \sigma=\frac D2\left(\frac12-\frac1p\right)>0.
\]
Applying (\ref{ineq:fixed frequency decay}) to \((f_n,g_n)\), and using
\[
    \|(f_n,g_n)\|_{H^1\times L^2}\leq A,
\]
we get
\[
    \|P_{N_n}u_n(t_n)\|_{L^\infty}
    =
    \|P_{N_n}\pi_1S_\alpha(t_n)(f_n,g_n)\|_{L^\infty}
    \to0,
\]
because \(N_n\sim1\) and \(t_n\to+\infty\). Since \(N_n^{-\frac{D-2}{2}}\sim1\),
this contradicts the concentration lower bound (\ref{ineq:refine stri give}). Therefore \(t_n\) must be bounded. Since the linear damped flow is continuous on \(H^1\times L^2\) on
finite time intervals, the sequence $S_\alpha(t_n)(f_n,g_n)$
is bounded in \(H^1\times L^2\). Passing to a subsequence, we may assume
\[
    S_\alpha(t_n)(f_n,g_n)
    \rightharpoonup
    (\Phi,\Psi)
    \qquad
    \text{weakly in }H^1\times L^2.
\]
The lower bound (\ref{ineq:refine stri give}), together with (\ref{ineq:N and r}), implies $(\Phi,\Psi)\neq(0,0)$.
Absorbing the finite time $t_n$ into the definition of the profile gives the fixed-scale damped concentration.

It remains to consider the high-frequency case \(N_n\to+\infty\). Set
\[
    \lambda_n:=N_n^{-1}\to0,
    \qquad
    s_n:=\frac{t_n}{\lambda_n}.
\]
We first show that \(t_n\) is bounded. For \(N\gg_\alpha1\), the explicit
high-frequency formula for the damped propagator and Bernstein's inequality give
\[
    N^{-\frac{D-2}{2}}
    \|P_N\pi_1S_\alpha(t)(f,g)\|_{L^\infty}
    \lesssim
    e^{-\frac{\alpha}{2}t}
    \|(f,g)\|_{H^1\times L^2}.
\]
If \(t_n\to+\infty\), this estimate contradicts the concentration lower bound.
Thus \(t_n\) is bounded. Passing to a subsequence, we may assume \(t_n\to T\in[0,\infty)\). Define
\[
    F_n:=D_{\lambda_n}^{-1}f_n,
    \qquad
    G_n:=\dot D_{\lambda_n}^{-1}g_n.
\]
Then \((F_n,G_n)\) is bounded in \(\dot H^1\times L^2\), and the scaling relation
gives
\[
    D_{\lambda_n}^{-1}u_n(\lambda_ns)
    =
    \pi_1S_{\alpha\lambda_n}(s)(F_n,G_n).
\]
In particular,
\[
    D_{\lambda_n}^{-1}u_n(t_n)
    =
    \pi_1S_{\alpha\lambda_n}(s_n)(F_n,G_n).
\]
On unit frequencies, since \(t_n\) is bounded and \(\lambda_n\to0\),
\[
    P_1\pi_1S_{\alpha\lambda_n}(s_n)(F_n,G_n)
    -
    e^{-\frac{\alpha t_n}{2}}
    P_1\pi_1S_0(s_n)(F_n,G_n)
    \to0
\]
after pairing with any fixed Schwartz test function. Since
\(e^{-\alpha t_n/2}\to e^{-\alpha T/2}>0\), the concentration lower bound yields
a nontrivial weak limit for the free wave shifted sequence. Hence, after passing
to a subsequence,
\[
    S_0(s_n)(F_n,G_n)
    \rightharpoonup
    (\phi,\psi)
    \qquad
    \text{weakly in }\dot H^1\times L^2,
\]
with \((\phi,\psi)\neq(0,0)\).
If \(s_n\to s_\infty<\infty\), we replace
\[
    (\phi,\psi)
    \quad\text{by}\quad
    S_0(-s_\infty)(\phi,\psi)
\]
and may assume \(s_n\equiv0\). Otherwise, after passing to a subsequence,
\(s_n\to+\infty\). This proves the small-scale wave concentration and completes
the proof.
\end{proof}
The preceding lemma only produces a weak limit in the natural profile coordinates.
We now turn this weak concentration into a genuine profile which can be subtracted
from the original sequence. In the small-scale case, the profile belongs naturally
to \(\dot H^1\times L^2\), and therefore we insert a low-frequency cut-off before
rescaling it back to the \(H^1\times L^2\) level. This also gives the
Pythagorean expansion of the energy and the weak orthogonality of the new
remainder.
\begin{lemma}[Extraction of one profile]
\label{lem:one-profile-extraction}
Let $\{(f_n,g_n)\}$ satisfy the condition in Lemma \ref{lem:weak-concentration}.
Then, after passing to a subsequence, there exists a sequence
\[
    (V_{0,n},V_{1,n})\in H^1_{\mathrm{rad}}(\mathbb R^D)\times
    L^2_{\mathrm{rad}}(\mathbb R^D)
\]
of one of the following two forms.

\medskip
\noindent
\textup{(i) Fixed-scale damped profile.}
There exists 
\[
    (\phi,\psi)\in H^1_{\mathrm{rad}}(\mathbb R^D)\times
    L^2_{\mathrm{rad}}(\mathbb R^D),
    \qquad
    (\phi,\psi)\neq(0,0),
\]
such that
\[
    (V_{0,n},V_{1,n})=(\phi,\psi).
\]
\medskip
\noindent
\textup{(ii) Small-scale wave profile.}
There exist $ \lambda_n\to0$, $s_n\in[0,\infty),$
and
\[
    (\phi,\psi)\in \dot H^1_{\mathrm{rad}}(\mathbb R^D)\times
    L^2_{\mathrm{rad}}(\mathbb R^D),
    \qquad
    (\phi,\psi)\neq(0,0),
\]
such that, if \(U_L\) is the free linear wave satisfying
\[
    (U_L(0),\partial_tU_L(0))=(\phi,\psi),
\]
then, for a fixed \(0<\theta<1\), setting $P_n:=P_{>\lambda_n^\theta}$, 
we define
\[
    V_{0,n}:=D_{\lambda_n}P_nU_L(-s_n),
    \qquad
    V_{1,n}:=\dot D_{\lambda_n}P_n\partial_tU_L(-s_n).
\]
Moreover, after passing to a further subsequence, either
\[
    s_n\equiv0
    \qquad\text{or}\qquad
    s_n\to+\infty.
\]
\medskip
In both cases, there exists \(c=c(\varepsilon,A)>0\) such that
\[
    \liminf_{n\to\infty}
    \left(
        \|V_{0,n}\|_{H^1}+\|V_{1,n}\|_{L^2}
    \right)
    \geq c.
\]
Let
\[
    (\widetilde f_n,\widetilde g_n)
    :=
    (f_n,g_n)-(V_{0,n},V_{1,n}).
\]
Then
\[
\begin{aligned}
    \|f_n\|_{H^1}^2+\|g_n\|_{L^2}^2
    =
    \|V_{0,n}\|_{H^1}^2+\|V_{1,n}\|_{L^2}^2
    +
    \|\widetilde f_n\|_{H^1}^2+\|\widetilde g_n\|_{L^2}^2
    +o_n(1).
\end{aligned}
\]
Moreover, the new remainder is orthogonal to the extracted profile in the
following sense. In case \textup{(i)},
\[
    (\widetilde f_n,\widetilde g_n)
    \rightharpoonup0
    \qquad
    \text{weakly in }H^1\times L^2.
\]
In case \textup{(ii)},
\[
    S_0(s_n)
    \left(
        D_{\lambda_n}^{-1}\widetilde f_n,\,
        \dot D_{\lambda_n}^{-1}\widetilde g_n
    \right)
    \rightharpoonup0
    \qquad
    \text{weakly in }\dot H^1\times L^2.
\]
\end{lemma}
\begin{proof}
By Lemma~\ref{lem:weak-concentration}, after passing to a subsequence, either a
fixed-scale damped concentration or a small-scale wave concentration occurs.
We first consider the fixed-scale case. After absorbing the finite concentration
time into the definition of the profile, we have
\[
    (f_n,g_n)\rightharpoonup(\phi,\psi)
    \qquad
    \text{weakly in }H^1\times L^2,
\]
where $(\phi,\psi)\neq(0,0)$.
Set
\[
    (V_{0,n},V_{1,n})=(\phi,\psi),
    \qquad
    (\widetilde f_n,\widetilde g_n)
    =
    (f_n,g_n)-(\phi,\psi).
\]
Then
\[
    (\widetilde f_n,\widetilde g_n)\rightharpoonup0
    \qquad
    \text{weakly in }H^1\times L^2.
\]
Consequently,
\[
\begin{aligned}
    \|f_n\|_{H^1}^2+\|g_n\|_{L^2}^2
    =
    \|\phi\|_{H^1}^2+\|\psi\|_{L^2}^2
    +
    \|\widetilde f_n\|_{H^1}^2+\|\widetilde g_n\|_{L^2}^2
    +o_n(1).
\end{aligned}
\]
The quantitative lower bound follows from the quantitative lower bound in
Lemma~\ref{lem:weak-concentration}. This proves the lemma in the fixed-scale case.
We now turn to the small-scale case. Lemma~\ref{lem:weak-concentration} gives
\[
    \lambda_n\to0,\qquad s_n\in[0,\infty),
\]
and
\begin{equation}\label{S0 samll wave weak convergence}
   S_0(s_n)
    \left(
        D_{\lambda_n}^{-1}f_n,\,
        \dot D_{\lambda_n}^{-1}g_n
    \right)
    \rightharpoonup
    (\phi,\psi)
    \qquad
    \text{weakly in }\dot H^1\times L^2,   
\end{equation}
where $(\phi,\psi)\neq(0,0)$.
Let \(U_L\) be the free linear wave with data \((\phi,\psi)\), and set
\[
    P_n:=P_{>\lambda_n^\theta},
    \qquad 0<\theta<1.
\]
We define
\[
    V_{0,n}:=D_{\lambda_n}P_nU_L(-s_n),
    \qquad
    V_{1,n}:=\dot D_{\lambda_n}P_n\partial_tU_L(-s_n).
\]
First, we check that the profile belongs to \(H^1\times L^2\). Since
\(P_n=P_{>\lambda_n^\theta}\), Bernstein gives
\[
    \|P_n h\|_{L^2}
    \lesssim
    \lambda_n^{-\theta}\|\nabla h\|_{L^2}.
\]
Therefore
\begin{equation}\label{DPU vanish}
   \begin{aligned}
    \|D_{\lambda_n}P_nU_L(-s_n)\|_{L^2}
    &=
    \lambda_n\|P_nU_L(-s_n)\|_{L^2}
\\
    &\lesssim
    \lambda_n^{1-\theta}
    \|\nabla U_L(-s_n)\|_{L^2}
    \to0.
\end{aligned} 
\end{equation}
Moreover, since \(P_n\to \mathrm{Id}\) strongly on
\(\dot H^1\times L^2\), and since the free wave flow is unitary on
\(\dot H^1\times L^2\), we have
\begin{equation}\label{V energy decouple}
    \begin{aligned}
    \|V_{0,n}\|_{H^1}^2+\|V_{1,n}\|_{L^2}^2
    &=
    \|\nabla P_nU_L(-s_n)\|_{L^2}^2
    +
    \|P_n\partial_tU_L(-s_n)\|_{L^2}^2
    +o_n(1)
\\
    &=
    \|P_n\phi\|_{\dot H^1}^2+\|P_n\psi\|_{L^2}^2+o_n(1)
\\
    &=
    \|\phi\|_{\dot H^1}^2+\|\psi\|_{L^2}^2+o_n(1).
\end{aligned}
\end{equation}
Here the \(o_n(1)\) in the first line comes from the \(L^2\)-part of the
\(H^1\)-norm, which is negligible by (\ref{DPU vanish}).
We next compute the cross term. Using the scaling invariance of
\(\dot H^1\times L^2\), the unitarity of \(S_0(t)\), and the commutation of
\(P_n\) with \(S_0(t)\), we get
\[
\begin{aligned}
&\left\langle
    (f_n,g_n),
    (V_{0,n},V_{1,n})
\right\rangle_{\dot H^1\times L^2}
\\
&\quad =
\left\langle
    \left(
        D_{\lambda_n}^{-1}f_n,\,
        \dot D_{\lambda_n}^{-1}g_n
    \right),
    \left(
        P_nU_L(-s_n),\,
        P_n\partial_tU_L(-s_n)
    \right)
\right\rangle_{\dot H^1\times L^2}
\\
&\quad =
\left\langle
    S_0(s_n)
    \left(
        D_{\lambda_n}^{-1}f_n,\,
        \dot D_{\lambda_n}^{-1}g_n
    \right),
    P_n(\phi,\psi)
\right\rangle_{\dot H^1\times L^2}.
\end{aligned}
\]
By (\ref{S0 samll wave weak convergence}) and the strong convergence
\[
    P_n(\phi,\psi)\to(\phi,\psi)
    \qquad
    \text{in }\dot H^1\times L^2,
\]
we obtain
\begin{equation}\label{strong converge of cross term}
    \left\langle
    (f_n,g_n),
    (V_{0,n},V_{1,n})
\right\rangle_{\dot H^1\times L^2}
\to
\|\phi\|_{\dot H^1}^2+\|\psi\|_{L^2}^2.
\end{equation}
The \(L^2\)-part of the \(H^1\)-inner product of the first components is
negligible. Indeed, by (\ref{DPU vanish}) and the boundedness of \(f_n\) in \(L^2\),
\begin{equation}\label{fn cross V}
  \langle f_n,V_{0,n}\rangle_{L^2}=o_n(1).   
\end{equation}
Combining (\ref{V energy decouple}), (\ref{strong converge of cross term}), and (\ref{fn cross V}), we find
\[
\begin{aligned}
    \left\langle
        (f_n,g_n),
        (V_{0,n},V_{1,n})
    \right\rangle_{H^1\times L^2}
    =
    \|V_{0,n}\|_{H^1}^2+\|V_{1,n}\|_{L^2}^2+o_n(1).
\end{aligned}
\]
Therefore, with the definition of $(\widetilde f_n,\widetilde g_n)$
we obtain the Pythagorean expansion
\[
\begin{aligned}
    \|f_n\|_{H^1}^2+\|g_n\|_{L^2}^2
    =
    \|V_{0,n}\|_{H^1}^2+\|V_{1,n}\|_{L^2}^2
    +
    \|\widetilde f_n\|_{H^1}^2+\|\widetilde g_n\|_{L^2}^2
    +o_n(1).
\end{aligned}
\]
It remains to prove the weak orthogonality of the new remainder. By definition,
\[
\begin{aligned}
&S_0(s_n)
    \left(
        D_{\lambda_n}^{-1}\widetilde f_n,\,
        \dot D_{\lambda_n}^{-1}\widetilde g_n
    \right)
\\
&\quad =
S_0(s_n)
    \left(
        D_{\lambda_n}^{-1}f_n,\,
        \dot D_{\lambda_n}^{-1}g_n
    \right)
-
S_0(s_n)
    \left(
        P_nU_L(-s_n),\,
        P_n\partial_tU_L(-s_n)
    \right)
\\
&\quad =
S_0(s_n)
    \left(
        D_{\lambda_n}^{-1}f_n,\,
        \dot D_{\lambda_n}^{-1}g_n
    \right)
-
P_n(\phi,\psi).
\end{aligned}
\]
The first term converges weakly to \((\phi,\psi)\) by (\ref{S0 samll wave weak convergence}), while the second
term converges strongly to \((\phi,\psi)\). Hence
\[
    S_0(s_n)
    \left(
        D_{\lambda_n}^{-1}\widetilde f_n,\,
        \dot D_{\lambda_n}^{-1}\widetilde g_n
    \right)
    \rightharpoonup0
    \qquad
    \text{weakly in }\dot H^1\times L^2.
\]
Finally, the quantitative lower bound
\[
    \liminf_{n\to\infty}
    \left(
        \|V_{0,n}\|_{H^1}+\|V_{1,n}\|_{L^2}
    \right)
    \geq c(\varepsilon,A)
\]
follows from (\ref{V energy decouple}) and the quantitative nontriviality of
\((\phi,\psi)\) obtained in Lemma~\ref{lem:weak-concentration}. This completes
the proof.
\end{proof}
With these preparatory lemmas, we now turn to the proof of Proposition \ref{LPF for DNLW}.
\begin{proof}[Proof of Proposition~\ref{LPF for DNLW}]
We argue by induction, following the standard profile decomposition scheme. Set
\[
    (w_{0,n}^0,w_{1,n}^0):=(v_{0,n},v_{1,n}).
\]
Suppose that, for some \(J\geq0\), profiles $ \{(V_{0,n}^j,V_{1,n}^j)\}_{1\leq j\leq J}$
and a remainder \((w_{0,n}^J,w_{1,n}^J)\) have been constructed so that
\[
    (v_{0,n},v_{1,n})
    =
    \sum_{j=1}^J (V_{0,n}^j,V_{1,n}^j)
    +
    (w_{0,n}^J,w_{1,n}^J),
\]
with the energy decoupling and the corresponding weak orthogonality relations
for all previously extracted profiles.
If
\[
    \limsup_{n\to\infty}
    \left\|
        \pi_1S_\alpha(t)(w_{0,n}^J,w_{1,n}^J)
    \right\|_{S_D([0,\infty))}
    =0,
\]
then the construction stops. Otherwise, by Lemma~\ref{lem:one-profile-extraction},
applied to the sequence \((w_{0,n}^J,w_{1,n}^J)\), after passing to a further
subsequence, we obtain a new profile
\[
    (V_{0,n}^{J+1},V_{1,n}^{J+1})
\]
of either the fixed-scale damped type or the small-scale wave type. We then define
\[
    (w_{0,n}^{J+1},w_{1,n}^{J+1})
    :=
    (w_{0,n}^J,w_{1,n}^J)
    -
    (V_{0,n}^{J+1},V_{1,n}^{J+1}).
\]
Lemma~\ref{lem:one-profile-extraction} gives
\[
\begin{aligned}
    \|w_{0,n}^J\|_{H^1}^2+\|w_{1,n}^J\|_{L^2}^2
    =
    \|V_{0,n}^{J+1}\|_{H^1}^2
    +
    \|V_{1,n}^{J+1}\|_{L^2}^2
    +
    \|w_{0,n}^{J+1}\|_{H^1}^2
    +
    \|w_{1,n}^{J+1}\|_{L^2}^2
    +o_n(1).
\end{aligned}
\]
Iterating this identity yields, for each fixed \(J\),
\[
\begin{aligned}
    \|v_{0,n}\|_{H^1}^2+\|v_{1,n}\|_{L^2}^2
    =
    \sum_{j=1}^J
    \left(
        \|V_{0,n}^j\|_{H^1}^2+\|V_{1,n}^j\|_{L^2}^2
    \right)
    +
    \|w_{0,n}^J\|_{H^1}^2+\|w_{1,n}^J\|_{L^2}^2
    +o_n(1).
\end{aligned}
\]
We next verify the orthogonality of the parameters. First, there can be at most
one fixed-scale damped profile. Indeed, after such a profile has been extracted,
the new remainder converges weakly to zero in \(H^1\times L^2\), and hence
Lemma~\ref{lem:weak-concentration} cannot produce another nontrivial fixed-scale
weak limit from this remainder.
Now consider two small-scale profiles indexed by \(j\neq k\), with parameters
\[
    \lambda_n^j,\ s_n^j
    \qquad\text{and}\qquad
    \lambda_n^k,\ s_n^k.
\]
If their parameters were not asymptotically orthogonal, then, after passing to a
subsequence, the two scales would be comparable and the rescaled time centers
would remain at bounded distance. In that case, the profile
\((V_{0,n}^k,V_{1,n}^k)\) would have a nonzero weak limit in the coordinate frame
of the \(j\)-th profile, namely after applying
\[
    S_0(s_n^j)
    \left(
        D_{\lambda_n^j}^{-1}\cdot,\,
        \dot D_{\lambda_n^j}^{-1}\cdot
    \right).
\]
This contradicts the weak orthogonality of the remainder obtained at the stage
when the \(j\)-th profile was extracted. Hence the small-scale parameters satisfy
\[
    \frac{\lambda_n^j}{\lambda_n^k}
    +
    \frac{\lambda_n^k}{\lambda_n^j}
    +
    \frac{|\lambda_n^j s_n^j-\lambda_n^k s_n^k|}{\lambda_n^j}
    \to+\infty.
\]
The same argument gives the weak orthogonality of the final remainder to each
previously extracted profile.
It remains to prove the vanishing of the linear evolution of the remainder. Let
\(J_0\) be the maximal number of profiles extracted by the above procedure. If
\(J_0<\infty\), then the construction stops precisely when
\[
    \limsup_{n\to\infty}
    \left\|
        \pi_1S_\alpha(t)(w_{0,n}^{J_0},w_{1,n}^{J_0})
    \right\|_{S_D([0,\infty))}
    =0.
\]
If \(J_0=\infty\), suppose by contradiction that the remainders do not vanish in
the critical Strichartz norm. Then there exists \(\varepsilon_*>0\) such that,
for infinitely many \(J\),
\[
    \limsup_{n\to\infty}
    \left\|
        \pi_1S_\alpha(t)(w_{0,n}^J,w_{1,n}^J)
    \right\|_{S_D([0,\infty))}
    \geq \varepsilon_* .
\]
Applying Lemma~\ref{lem:one-profile-extraction} at each such step yields profiles
whose \(H^1\times L^2\) norms are bounded from below by a positive constant
depending only on \(\varepsilon_*\) and on the original energy bound. This
contradicts the energy decoupling, since
\[
    \sum_{j=1}^\infty
    \limsup_{n\to\infty}
    \left(
        \|V_{0,n}^j\|_{H^1}^2+\|V_{1,n}^j\|_{L^2}^2
    \right)
    \leq
    \limsup_{n\to\infty}
    \left(
        \|v_{0,n}\|_{H^1}^2+\|v_{1,n}\|_{L^2}^2
    \right)
    <\infty.
\]
Therefore
\[
    \lim_{J\to J_0}
    \limsup_{n\to\infty}
    \left\|
        \pi_1S_\alpha(t)(w_{0,n}^J,w_{1,n}^J)
    \right\|_{S_D([0,\infty))}
    =0.
\]
This completes the proof.
\end{proof}

\subsection{Nonlinear profile decomposition} In this subsection, we establish the nonlinear profile decomposition for (\ref{DNLW}).  Let $\{(v_{0,n},v_{1,n})\}$
be a bounded sequence in
\(H^1_{\mathrm{rad}}(\mathbb R^D)\times L^2_{\mathrm{rad}}(\mathbb R^D)\), and let
\[
    (v_{0,n},v_{1,n})
    =
    \sum_{j=1}^J (V_{0,n}^j,V_{1,n}^j)
    +
    (w_{0,n}^J,w_{1,n}^J)
\]
be the linear profile decomposition given by Proposition~\ref{LPF for DNLW}.
For each linear profile, we define the corresponding nonlinear profile as follows.

\medskip
\noindent
\textup{(i) Fixed-scale damped profile.}
If \(j\) is the fixed-scale damped profile, namely
\[
    (V_{0,n}^j,V_{1,n}^j)=(\phi^j,\psi^j),
    \qquad
    (\phi^j,\psi^j)\in H^1_{\mathrm{rad}}\times L^2_{\mathrm{rad}},
\]
let \(U^j\) be the solution of
\[
    \partial_{tt}U^j-\Delta U^j+\alpha\partial_tU^j=f(U^j),
    \qquad
    f(u)=|u|^{\frac4{D-2}}u,
\]
with
\[
    (U^j(0),\partial_tU^j(0))=(\phi^j,\psi^j).
\]
Set
\[
    U_n^j(t,x):=U^j(t,x).
\]
\medskip
\noindent
\textup{(ii) Small-scale wave profile.}
Assume that \(j\) is a small-scale wave profile. Let \(U_L^j\) be the free
linear wave satisfying
\[
    (U_L^j(0),\partial_tU_L^j(0))=(\phi^j,\psi^j),
    \qquad
    (\phi^j,\psi^j)\in\dot H^1_{\mathrm{rad}}\times L^2_{\mathrm{rad}}.
\]
If \(s_n^j\equiv0\), let \(U^j\) be the nonlinear wave solution satisfying
\[
    \partial_{tt}U^j-\Delta U^j=f(U^j),
    \qquad
    (U^j(0),\partial_tU^j(0))=(\phi^j,\psi^j).
\]
If \(s_n^j\to+\infty\), let \(U^j\) be the nonlinear wave solution scattering to
\(U_L^j\) as \(t\to-\infty\), namely
\[
    \lim_{t\to-\infty}
    \left\|
        (U^j(t),\partial_tU^j(t))
        -
        (U_L^j(t),\partial_tU_L^j(t))
    \right\|_{\dot H^1\times L^2}
    =0.
\]
For a fixed \(0<\theta<1\), set
\[
    P_n^j:=P_{>(\lambda_n^j)^\theta}.
\]
The small-scale nonlinear profile contribution at the original scale is defined
by
\[
    U_n^j(t,x)
    :=
    (\lambda_n^j)^{-\frac{D-2}{2}}
    P_n^j U^j\left(
        \frac{t}{\lambda_n^j}-s_n^j,
        \frac{x}{\lambda_n^j}
    \right).
\]
Equivalently,
\[
    (U_n^j(0),\partial_tU_n^j(0))
    =
    \left(
        D_{\lambda_n^j}P_n^jU^j(-s_n^j),
        \dot D_{\lambda_n^j}P_n^j\partial_tU^j(-s_n^j)
    \right).
\]

\begin{prop}[Nonlinear profile decomposition]
\label{prop:nonlinear-profile}
Let \(I_n\subset[0,\infty)\) be a sequence of time intervals containing \(0\).
Assume that the nonlinear profiles exist on the corresponding time intervals in
the following sense.
For each small-scale wave profile \(j\), set
\[
    I_n^j:=(\lambda_n^j)^{-1}I_n-s_n^j .
\]
We assume that there exists an interval \(I^j\) such that
\[
    I_n^j\subset I^j
\]
for all sufficiently large \(n\), that \(U^j\) is defined on \(I^j\), and that
\[
    \|U^j\|_{X_D(I^j)}<\infty.
\]
For the fixed-scale damped profile, if it exists, we assume that \(U^j\) is
defined on \(I_n\) and
\[
    \limsup_{n\to\infty}\|U^j\|_{X_D(I_n)}<\infty.
\]
Define
\[
    u_n^J(t):=
    \sum_{j=1}^J U_n^j(t)
    +
    \pi_1S_\alpha(t)(w_{0,n}^J,w_{1,n}^J).
\]
Then, for every fixed \(J<J_0\),
\begin{equation}
    (u_n^J(0),\partial_tu_n^J(0))-(v_{0,n},v_{1,n})
    \to0
    \qquad
    \text{in }H^1\times L^2.
    \label{eq:nonlinear-profile-initial-error}
\end{equation}
Moreover, if
\[
    e_n^J:=\partial_{tt}u_n^J-\Delta u_n^J+\alpha\partial_tu_n^J-f(u_n^J),
\]
then
\begin{equation}
    \lim_{J\to J_0}\limsup_{n\to\infty}
    \|e_n^J\|_{N_D(I_n)}
    =0.
    \label{eq:nonlinear-profile-error}
\end{equation}
Consequently, if \(u_n\) is the exact solution to
\[
    \partial_{tt}u_n-\Delta u_n+\alpha\partial_tu_n=f(u_n),
    \qquad
    (u_n(0),\partial_tu_n(0))=(v_{0,n},v_{1,n}),
\]
then
\begin{equation}
\begin{aligned}
    \lim_{J\to J_0}\limsup_{n\to\infty}
    \Bigl[
        \|u_n-u_n^J\|_{X_D(I_n)}
        +
        \|(u_n-u_n^J,\partial_tu_n-\partial_tu_n^J)\|
            _{L_t^\infty(I_n;H^1\times L^2)}
    \Bigr]
    =0.
\end{aligned}
    \label{eq:nonlinear-profile-approximation}
\end{equation}
\end{prop}
\begin{proof}
\noindent
\textbf{Step 1. Initial data matching.}
We first prove
\[
    (u_n^J(0),\partial_tu_n^J(0))
    -
    (v_{0,n},v_{1,n})
    \to0
    \qquad
    \text{in }H^1\times L^2
\]
for every fixed \(J<J_0\).
By the linear profile decomposition,
\[
    (v_{0,n},v_{1,n})
    =
    \sum_{j=1}^J (V_{0,n}^j,V_{1,n}^j)
    +
    (w_{0,n}^J,w_{1,n}^J).
\]
Moreover,
\[
    \pi_1S_\alpha(0)(w_{0,n}^J,w_{1,n}^J)=w_{0,n}^J,
    \qquad
    \partial_t\pi_1S_\alpha(0)(w_{0,n}^J,w_{1,n}^J)=w_{1,n}^J.
\]
Therefore it suffices to prove, for each fixed profile \(j\), that
\[
    (U_n^j(0),\partial_tU_n^j(0))
    -
    (V_{0,n}^j,V_{1,n}^j)
    \to0
    \qquad
    \text{in }H^1\times L^2.
\]
If \(j\) is the fixed-scale damped profile, then
\[
    (V_{0,n}^j,V_{1,n}^j)=(\phi^j,\psi^j),\qquad(U^j(0),\partial_tU^j(0))=(\phi^j,\psi^j).
\]
Thus
\[
    (U_n^j(0),\partial_tU_n^j(0))
    =
    (V_{0,n}^j,V_{1,n}^j).
\]
Now suppose that \(j\) is a small-scale wave profile. Recall the definition of $P_n^j$
and
\[
    (V_{0,n}^j,V_{1,n}^j)
    =
    \left(
        D_{\lambda_n^j}P_n^jU_L^j(-s_n^j),
        \dot D_{\lambda_n^j}P_n^j\partial_tU_L^j(-s_n^j)
    \right),
\]
whereas
\[
    (U_n^j(0),\partial_tU_n^j(0))
    =
    \left(
        D_{\lambda_n^j}P_n^jU^j(-s_n^j),
        \dot D_{\lambda_n^j}P_n^j\partial_tU^j(-s_n^j)
    \right).
\]
If \(s_n^j\equiv0\), then
\[
    (U^j(0),\partial_tU^j(0))
    =
    (U_L^j(0),\partial_tU_L^j(0)),
\]
and the difference is identically zero. If \(s_n^j\to+\infty\), then by the
scattering condition as \(t\to-\infty\),
\[
    \left\|
        (U^j(-s_n^j),\partial_tU^j(-s_n^j))
        -
        (U_L^j(-s_n^j),\partial_tU_L^j(-s_n^j))
    \right\|_{\dot H^1\times L^2}
    \to0.
\]
Set
\[
    \eta_n^j:=U^j(-s_n^j)-U_L^j(-s_n^j),
    \qquad
    \zeta_n^j:=\partial_tU^j(-s_n^j)-\partial_tU_L^j(-s_n^j).
\]
Then
\[
    \|\eta_n^j\|_{\dot H^1}+\|\zeta_n^j\|_{L^2}\to0.
\]
Since \(P_n^j\) is bounded on \(\dot H^1\times L^2\), and since the
energy-critical scaling preserves \(\dot H^1\times L^2\), we obtain
\[
    \|\nabla D_{\lambda_n^j}P_n^j\eta_n^j\|_{L^2}
    +
    \|\dot D_{\lambda_n^j}P_n^j\zeta_n^j\|_{L^2}
    \leq
    \|\eta_n^j\|_{\dot H^1}+\|\zeta_n^j\|_{L^2}
    \to0.
\]
For the \(L^2\)-part of the first component, Bernstein gives
\[
    \|P_n^j h\|_{L^2}
    \lesssim
    (\lambda_n^j)^{-\theta}\|\nabla h\|_{L^2}.
\]
Therefore 
\[
\begin{aligned}
    \|D_{\lambda_n^j}P_n^j\eta_n^j\|_{L^2}
    =
    \lambda_n^j\|P_n^j\eta_n^j\|_{L^2}
\lesssim
    (\lambda_n^j)^{1-\theta}\|\eta_n^j\|_{\dot H^1}
    \to0.
\end{aligned}
\]
Thus
\[
    (U_n^j(0),\partial_tU_n^j(0))
    -
    (V_{0,n}^j,V_{1,n}^j)
    \to0
    \qquad
    \text{in }H^1\times L^2.
\]
Since \(J\) is fixed, summing over \(1\leq j\leq J\) proves the initial data
matching (\ref{eq:nonlinear-profile-initial-error}).
Having matched the initial data, we next show that the function \(u_n^J\) is an
approximate solution to the damped equation. The fixed-scale nonlinear profile is
an exact solution of the damped equation, and the linear remainder solves the
homogeneous damped equation. Hence the only error produced by a single profile
comes from the small-scale wave profiles. 

\medskip
\noindent
\textbf{Step 2. Error generated by one small-scale wave profile.}
Fix a small-scale profile \(j\). To simplify notation, write
\[
    \lambda_n=\lambda_n^j,\qquad
    s_n=s_n^j,\qquad
    P_n=P_n^j,\qquad
    U=U^j.
\]
Set $ \tau=\frac{t}{\lambda_n}-s_n,
    $ and $
    y=\frac{x}{\lambda_n}. $
Then
\[
    U_n^j(t,x)=\lambda_n^{-\frac{D-2}{2}}P_nU(\tau,y).
\]
Since \(U\) solves
\[
    \partial_{\tau\tau}U-\Delta_yU=f(U),
\]
and since \(P_n\) commutes with \(\partial_\tau\) and \(\Delta_y\), we have
\[
    \partial_{tt}U_n^j-\Delta U_n^j
    =
    \lambda_n^{-\frac{D+2}{2}}
    P_nf(U)(\tau,y).
\]
Moreover,
\[
    f(U_n^j)(t,x)
    =
    \lambda_n^{-\frac{D+2}{2}}
    f(P_nU)(\tau,y)\text{ , and }\quad \alpha\partial_tU_n^j(t,x)
    =
    \lambda_n^{-\frac{D+2}{2}}
    \alpha\lambda_n P_n\partial_\tau U(\tau,y).
\]
Therefore the damped equation error of the \(j\)-th small-scale profile is
\[
\begin{aligned}
    e_{n,j}
    &:=
    \partial_{tt}U_n^j-\Delta U_n^j+\alpha\partial_tU_n^j-f(U_n^j)
\\
    &=
    \lambda_n^{-\frac{D+2}{2}}
    \left[
        P_nf(U)-f(P_nU)
        +
        \alpha\lambda_n P_n\partial_\tau U
    \right]
    \left(
        \frac{t}{\lambda_n}-s_n,
        \frac{x}{\lambda_n}
    \right).
\end{aligned}
\]
Let $I_n^j:=\lambda_n^{-1}I_n-s_n$
be the corresponding time interval in the profile variables. By the critical
scaling of the forcing norm \(N_D\), it is enough to prove
\[
    \|P_nf(U)-f(P_nU)\|_{N_D(I_n^j)}
    +
    \alpha\lambda_n\|P_n\partial_tU\|_{N_D(I_n^j)}
    \to0.
\]
We first handle the truncation error. By the assumption above, \(I_n^j\subset I^j\) for all sufficiently large \(n\)
and \(\|U\|_{X_D(I^j)}<\infty\). Hence
\[
    P_nU\to U
    \qquad
    \text{in }X_D(I_n^j).
\]
Indeed, it is enough to prove the convergence on \(I^j\). If \(I^j\) is
unbounded, this follows from the absolute continuity of the \(X_D\)-norm and the
strong convergence of Littlewood--Paley cutoffs on compact time intervals. Using
\[
    P_nf(U)-f(P_nU)
    =
    P_n\bigl(f(U)-f(P_nU)\bigr)
    -
    P_{\leq\lambda_n^\theta}f(U),
\]
we estimate the first term by the nonlinear difference estimate in the
\(X_D\)-\(N_D\) framework:
\[
    \|P_n(f(U)-f(P_nU))\|_{N_D(I_n^j)}
    \lesssim
    C(\|U\|_{X_D(I_n^j)},\|P_nU\|_{X_D(I_n^j)})
    \|U-P_nU\|_{X_D(I_n^j)}.
\]
This tends to \(0\).  Similarly, since \(f(U)\in N_D(I^j)\), we have
\[
    P_{\leq(\lambda_n^j)^\theta}f(U)\to0
    \qquad
    \text{in }N_D(I_n^j).
\]
Therefore
\begin{equation}
   \|P_nf(U)-f(P_nU)\|_{N_D(I_n^j)}\to0.
    \label{eq:cutoff-error-small}  
\end{equation}
It remains to estimate the damping error. We use the following lower-order
estimate, which is a direct consequence of the definitions of the spaces
\(X_D,N_D\), Bernstein's inequality on the support of \(P_{>\lambda^\theta}\),
and the Strichartz estimates for the wave equation: there exist constants
\(\kappa>0\) and \(C>0\), depending only on \(D\) and \(\theta\), such that for
every interval \(J\),
\begin{equation*}
   \lambda\|P_{>\lambda^\theta}\partial_t U\|_{N_D(J)}
    \leq
    C\lambda^\kappa
    \left(
        \|(U,\partial_tU)\|_{L_t^\infty(J;\dot H^1\times L^2)}
        +
        \|U\|_{X_D(J)}
    \right).
    \label{eq:lower-order-damping-estimate}  
\end{equation*}
Applying this estimate with \(\lambda=\lambda_n\) and \(J=I_n^j\), and using the
assumed finite profile norm, we get
\begin{equation}
   \alpha\lambda_n\|P_n\partial_tU\|_{N_D(I_n^j)}
    \to0.
    \label{eq:damping-error-small}  
\end{equation}
Combining \eqref{eq:cutoff-error-small} and \eqref{eq:damping-error-small}, we
obtain
\[
    \|e_{n,j}\|_{N_D(I_n)}\to0.
\]
Thus every small-scale wave profile is, after truncation and rescaling, an
approximate solution to the original damped equation. This completes the analysis
of the only profile-level error which is not already built into the definition of
the nonlinear profiles.

We now pass from the error of each individual profile to the error of the full
approximate solution. Recall that
\[
    u_n^J
    =
    \sum_{j=1}^J U_n^j+r_n^J,
    \qquad
    r_n^J(t):=\pi_1S_\alpha(t)(w_{0,n}^J,w_{1,n}^J).
\]
The linear remainder \(r_n^J\) solves the homogeneous damped wave equation.
Moreover, every fixed-scale damped nonlinear profile solves the nonlinear damped
equation exactly, while Step 2 shows that each small-scale wave profile solves it
up to an error which tends to zero in \(N_D(I_n)\). Hence
\[
\begin{aligned}
    e_n^J
    &:=
    \partial_{tt}u_n^J-\Delta u_n^J+\alpha\partial_tu_n^J-f(u_n^J)
\\
    &=
    \sum_{j=1}^J f(U_n^j)
    -
    f\left(\sum_{j=1}^J U_n^j+r_n^J\right)
    +
    \sum_{\substack{1\leq j\leq J\\ j\ \mathrm{small\ scale}}} e_{n,j},
\end{aligned}
\]
where
\[
    \lim_{n\to\infty}\|e_{n,j}\|_{N_D(I_n)}=0
\]
for every fixed small-scale profile \(j\). Thus it remains to show that the
nonlinear interaction term is small.

\medskip
\noindent
\textbf{Step 3. Orthogonality and nonlinear interactions.}
We claim that, for every fixed \(J<J_0\),
\[
    \left\|
        f\left(\sum_{j=1}^J U_n^j\right)
        -
        \sum_{j=1}^J f(U_n^j)
    \right\|_{N_D(I_n)}
    \to0.
\]
This follows from the asymptotic orthogonality of the parameters. More precisely,
for \(j\neq k\), the profiles \(U_n^j\) and \(U_n^k\) are separated either in scale
or in the rescaled time variables. Hence all mixed products which appear in the
nonlinear estimates vanish. In the high-dimensional case \(D\geq6\), these mixed
terms are estimated in the same spaces as in the nonlinear estimates of \cite{InuiWakasugi2021}, namely in the \(X'(I_n)\) and \(W'(I_n)\) components of
\(N_D(I_n)\), using the \(X(I_n)\), \(Y(I_n)\), \(W(I_n)\), and \(S_1(I_n)\)
controls of the profiles. Thus, for every pair \(j\neq k\),
\[
    \|f(U_n^j+U_n^k)-f(U_n^j)-f(U_n^k)\|_{N_D(I_n)}
    \to0.
\]
Since \(J\) is fixed, summing over finitely many pairs gives
\[
    \left\|
        f\left(\sum_{j=1}^J U_n^j\right)
        -
        \sum_{j=1}^J f(U_n^j)
    \right\|_{N_D(I_n)}
    \to0.
\]
It remains to include the linear remainder \(r_n^J\). By the homogeneous
Strichartz estimates and the energy decoupling, the sequence \(r_n^J\) is bounded
in \(X_D(I_n)\) uniformly in \(n\), for each fixed \(J\). On the other hand, the
linear profile decomposition gives
\[
    \lim_{J\to J_0}
    \limsup_{n\to\infty}
    \|r_n^J\|_{S_D(I_n)}
    =
    0.
\]
Using the nonlinear difference estimate in the \(X_D\)-\(N_D\) framework, we
therefore obtain
\[
\begin{aligned}
\left\|
    f\left(\sum_{j=1}^J U_n^j+r_n^J\right)
    -
    f\left(\sum_{j=1}^J U_n^j\right)
\right\|_{N_D(I_n)}
\leq
    C\left(
        \sum_{j=1}^J\|U_n^j\|_{X_D(I_n)}
        +
        \|r_n^J\|_{X_D(I_n)}
    \right)
    o_{J}(1),
\end{aligned}
\]
where
\[
    o_J(1)\to0
    \qquad
    \text{as }J\to J_0,
\]
after taking \(\limsup_{n\to\infty}\). More explicitly, the small factor comes
from the critical Strichartz norm of \(r_n^J\), while the remaining factors are
bounded by the assumptions on the nonlinear profiles and by the linear estimates
for the remainder.
Combining the preceding estimates, (\ref{eq:nonlinear-profile-error}) is proved.

\medskip
\noindent
\textbf{Step 4. Perturbation argument.}
By Step 1,
\[
    (u_n^J(0),\partial_tu_n^J(0))
    -
    (v_{0,n},v_{1,n})
    \to0
    \qquad
    \text{in }H^1\times L^2
\]
for every fixed \(J\). Moreover, by the assumed profile bound, the homogeneous
Strichartz estimate for the linear remainder, and the energy decoupling, we have
\[
    \limsup_{n\to\infty}\|u_n^J\|_{X_D(I_n)}<\infty
\]
for every fixed \(J\). The equation error satisfies
\[
    \lim_{J\to J_0}
    \limsup_{n\to\infty}
    \|e_n^J\|_{N_D(I_n)}
    =0.
\]
Therefore Lemma~\ref{lem:long-time-perturbation} applies to the approximate
solution \(u_n^J\). It follows that the exact solution \(u_n\) with initial data
\[
    (u_n(0),\partial_tu_n(0))=(v_{0,n},v_{1,n})
\]
exists on \(I_n\), for \(n\) sufficiently large after \(J\) is fixed, and satisfies (\ref{eq:nonlinear-profile-approximation}).
This completes the nonlinear profile decomposition.

\end{proof}

	\section{Sequential soliton resolution}
The goal of this section is to prove Theorem~\ref{Sequential soliton resolution}.
We first prove a sequential compactness lemma, then extract the radiation term,
and finally combine these ingredients with the non-concentration of energy in the
self-similar region.
\subsection{Sequential compactness lemma}
Define the localized distance to the multi-bubble manifold by
\[
\boldsymbol{\delta}_R(\boldsymbol u)
:=
\inf_{M,\boldsymbol\iota,\boldsymbol\lambda}
\left(
    \left\|
        (u-\mathcal W(\boldsymbol\iota,\boldsymbol\lambda),\partial_tu)
    \right\|_{\mathcal E(r\le R)}^2
    +
    \sum_{j=1}^M
    \left(\frac{\lambda_j}{\lambda_{j+1}}\right)^{\frac{D-2}{2}}
\right)^{1/2},
\]
where the infimum is taken over
\(M\in\{0,1,2,\ldots\}\),
\(\boldsymbol\iota\in\{-1,1\}^M\), and
\(\boldsymbol\lambda\in(0,\infty)^M\). We use the convention
\(\lambda_{M+1}=R\).

We shall use two versions of the compactness argument, corresponding to bounded
and unbounded time intervals.

\begin{lemma}[Sequential compactness lemma]
\label{lem:sequential-compactness}
Let \(D\geq5\). Let \(\rho_n>0\), and let \(u_n\) be a sequence of
solutions to \((1.1)\) on the time intervals \([0,\rho_n]\) such that
\begin{equation*}
    \limsup_{n\to\infty}\sup_{t\in[0,\rho_n]}
    \|\boldsymbol{u}_n(t)\|_{H^1\times L^2}<\infty.
\end{equation*}
Assume that one of the following two alternatives holds.

\medskip
\noindent
\textup{Case I.} The sequence \(\{\rho_n\}\) is bounded, and there exists
\(R_n\to+\infty\) such that
\begin{equation}
    \lim_{n\to\infty}
    \frac1{\rho_n}
    \int_0^{\rho_n}\int_0^{\rho_nR_n}
    |\partial_tu_n(t,r)|^2r^{D-1}\,drdt=0.
    \label{eq:seq-comp-kinetic-finite}
\end{equation}
\medskip
\noindent
\textup{Case II.} We have \(\rho_n\to+\infty\), and there exists
\(R_n\to+\infty\) such that
\begin{equation}
    \lim_{n\to\infty}
    \int_0^{\rho_n}\int_0^{\rho_nR_n}
    |\partial_tu_n(t,r)|^2r^{D-1}\,drdt=0.
    \label{eq:seq-comp-kinetic-infinite}
\end{equation}
Then, after passing to a subsequence, there exist
\[
    t_n\in[0,\rho_n],
    \qquad
    r_n\leq R_n,
    \qquad
    r_n\to+\infty,
\]
such that
\[
    \lim_{n\to\infty}\delta_{\rho_nr_n}(u_n(t_n))=0.
\]
\end{lemma}

\begin{proof}
We first prove the lemma in Case II. The proof in Case I differs only in the
initial choice of the two endpoint times and will be explained at the end.

\medskip
\noindent
\textbf{Step 1. A localized virial identity between two good endpoint times.}
We first choose two times at which the localized kinetic energy is sufficiently
small. From \eqref{eq:seq-comp-kinetic-infinite}, the kinetic energy on the first
and the last third of \([0,\rho_n]\) tends to zero. Hence, choosing
\(R_{1,n}\to+\infty\), \(R_{1,n}\leq R_n\), sufficiently slowly, we can find
\[
    \sigma_n\in\left[0,\frac{\rho_n}{3}\right],
    \qquad
    \tau_n\in\left[\frac{2\rho_n}{3},\rho_n\right],
\]
such that
\begin{equation}
    \rho_nR_{1,n}
    \int_0^{\rho_nR_n}
    |\partial_tu_n(\sigma_n,r)|^2r^{D-1}\,dr
    \to0,
    \label{eq:sigma-kinetic-small}
\end{equation}
and
\begin{equation}
    \rho_nR_{1,n}
    \int_0^{\rho_nR_n}
    |\partial_tu_n(\tau_n,r)|^2r^{D-1}\,dr
    \to0.
    \label{eq:tau-kinetic-small}
\end{equation}
Recall the definition of $\underline{\Lambda}$
and define the localized scaling functional
\[
    \mathcal M_{n,R}(t):=
    \int_0^\infty
    \partial_tu_n(t,r)\underline{\Lambda}u_n(t,r)
    \chi_R(r)r^{D-1}\,dr,
    \qquad
    \chi_R(r):=\chi(r/R).
\]
Here \(\chi\) is a fixed radial cut-off satisfying \(\chi=1\) on
\(|x|\leq1/2\) and \(\chi=0\) on \(|x|\geq1\). By Cauchy--Schwarz, the uniform
energy bound, and \eqref{eq:sigma-kinetic-small}--\eqref{eq:tau-kinetic-small},
we have
\[
    \mathcal M_{n,\rho_nR_{1,n}}(\sigma_n)\to0,
    \qquad
    \mathcal M_{n,\rho_nR_{1,n}}(\tau_n)\to0.
\]
The localized Jia--Kenig virial identity (see \cite{JK 2017}) gives, for \(t\in[\sigma_n,\tau_n]\),
\[
\begin{aligned}
    \frac{d}{dt}\mathcal M_{n,\rho_nR_{1,n}}(t)
    =
    \int_0^\infty
    &\Bigg[
        \left(
            \partial_r^2u_n+\frac{D-1}{r}\partial_ru_n
            -|u_n|^{\frac4{D-2}}u_n
        \right)\underline{\Lambda}u_n
\\
    &\quad
        -\alpha\partial_tu_n\underline{\Lambda}u_n
        +\partial_tu_n
        \left(r\partial_r\partial_tu_n+\frac D2\partial_tu_n\right)
    \Bigg]
    \chi_{\rho_nR_{1,n}}r^{D-1}\,dr .
\end{aligned}
\]
Integrating this identity from \(\sigma_n\) to \(\tau_n\), and using the
vanishing of the endpoint functionals, we obtain
\begin{equation}
    \int_{\sigma_n}^{\tau_n}
    \mathcal I_n(t)\,dt=o_n(1),
    \label{eq:average-virial-small}
\end{equation}
where
\[
\begin{aligned}
    \mathcal I_n(t):=
    \int_0^\infty
    &\Bigg[
        \left(
            \partial_r^2u_n+\frac{D-1}{r}\partial_ru_n
            -|u_n|^{\frac4{D-2}}u_n
        \right)\underline{\Lambda}u_n
\\
    &\quad
        -\alpha\partial_tu_n\underline{\Lambda}u_n
        +\partial_tu_n
        \left(r\partial_r\partial_tu_n+\frac D2\partial_tu_n\right)
    \Bigg](t,r)
    \chi_{\rho_nR_{1,n}}r^{D-1}\,dr .
\end{aligned}
\]
\medskip
\noindent
\textbf{Step 2. Selection of a good time.}
We now choose a time at which the kinetic energy and the localized virial
functional have the desired pointwise properties. We use the elementary
selection Lemma~3.4 in \cite{JL}, applied to
\[
    f_n(t):=\int_0^{\rho_nR_n}|\partial_tu_n(t,r)|^2r^{D-1}\,dr
\]
and to the function
\[
    g_n(t):=-\mathcal I_n(t).
\]
The assumptions of that lemma follow from \eqref{eq:seq-comp-kinetic-infinite}
and \eqref{eq:average-virial-small}. Hence there exists
\(t_n\in[\sigma_n,\tau_n]\) such that
\begin{equation}
    \int_0^{\rho_nR_n}|\partial_tu_n(t_n,r)|^2r^{D-1}\,dr\to0,
    \label{eq:seq-good-time-kinetic}
\end{equation}
and, more generally,
\begin{equation}
    \lim_{n\to\infty}
    \sup_{I\subset[\sigma_n,\tau_n]}
    \frac1{|I|}
    \int_I\int_0^{\rho_nR_n}
    |\partial_tu_n(t,r)|^2r^{D-1}\,drdt=0.
    \label{eq:seq-maximal-kinetic}
\end{equation}
Moreover, for every sequence \(\widetilde R_n\leq R_{1,n}\) with
\(\widetilde R_n\to+\infty\), the localized virial functional satisfies
\begin{equation}
\begin{aligned}
0\geq
\limsup_{n\to\infty}
\Bigg(
-\int_0^\infty
    &\left[
        \left(
            \partial_r^2u_n+\frac{D-1}{r}\partial_ru_n
            -|u_n|^{\frac4{D-2}}u_n
        \right)\underline{\Lambda}u_n
        -\alpha\partial_tu_n\underline{\Lambda}u_n
\right.\\
&\left.
        +\partial_tu_n
        \left(r\partial_r\partial_tu_n+\frac D2\partial_tu_n\right)
    \right](t_n,r)
    \chi_{\rho_n\widetilde R_n}(r)r^{D-1}\,dr
\Bigg).
\end{aligned}
    \label{eq:seq-good-time-virial}
\end{equation}
The time \(t_n\) fixed in this step
will be the time at which the profile decomposition is applied.

\medskip
\noindent
\textbf{Step 3. Truncation and linear profile decomposition at the good time.}
The kinetic estimates obtained in Step~2 are localized in the region
\(r\leq \rho_nR_n\). We first pass to a truncated sequence for which these
estimates become global in space.
Choose sequences \(r_{2,n}\) and \(A_n\) such that
\[
    1\ll A_n\ll r_{2,n},\qquad
    A_nr_{2,n}\ll R_n,\qquad
    r_{2,n}\to+\infty.
\]
In Case II, since \(\rho_n\to+\infty\), this also gives
\[
    R_n^\ast:=\rho_nr_{2,n}\to+\infty.
\]
By the annular pigeonhole argument, see the proof of Lemma 3.1 from \cite{JL}, and the uniform energy bound, we may further
assume that
\begin{equation}
    \|\vec u_n(t_n)\|_{\mathcal E(A_n^{-1}\rho_nr_{2,n},\,A_n\rho_nr_{2,n})}
    \to0.
    \label{eq:step3-annulus-small}
\end{equation}
Set
\[
    R_n^\ast:=\rho_nr_{2,n}.
\]
Let \(\chi_n\) be a smooth radial cut-off such that
\[
    \chi_n(r)=1\quad\text{for }r\leq R_n^\ast,\qquad
    \chi_n(r)=0\quad\text{for }r\geq 2R_n^\ast,
\]
and
\[
    |\partial_r\chi_n|\lesssim (R_n^\ast)^{-1}.
\]
For all large \(n\), the transition region of \(\chi_n\) is contained in
\[
    A_n^{-1}R_n^\ast<r<A_nR_n^\ast.
\]
Define
\[
    \vec{\widetilde u}_n(t_n)
    :=
    \bigl(
        \chi_nu_n(t_n),\chi_n\partial_tu_n(t_n)
    \bigr),
\]
and let \(\widetilde u_n\) be the damped wave solution with this initial data at
time \(t_n\).
By \eqref{eq:step3-annulus-small} and Hardy's inequality, the cut-off error is
\(o_n(1)\) in the energy norm. Hence
\begin{equation*}
    \vec{\widetilde u}_n(t_n)=\vec u_n(t_n)
    \qquad\text{for }r\leq R_n^\ast,
\end{equation*}
and
\begin{equation*}
    \|\vec{\widetilde u}_n(t_n)\|_{\mathcal E(r\geq R_n^\ast)}
    =o_n(1).
\end{equation*}
We next pass the localized kinetic estimates to \(\widetilde u_n\). By finite
speed of propagation, \(\widetilde u_n=u_n\) in the cone
\[
    |t-t_n|+r<R_n^\ast.
\]
The region where the two solutions may differ is contained in the domain of
dependence of the annulus
\[
    A_n^{-1}R_n^\ast<r<A_nR_n^\ast.
\]
Since
\[
    A_n\ll r_{2,n},\qquad A_nr_{2,n}\ll R_n,
\]
this domain of dependence remains inside \(r\leq\rho_nR_n\) for
\(t\in[\sigma_n,\tau_n]\). Its contribution is \(o_n(1)\) by
\eqref{eq:step3-annulus-small} and the local energy estimate. Therefore the
kinetic estimates from Step~2 imply
\begin{equation}
    \|\partial_t\widetilde u_n(t_n)\|_{L^2(\mathbb R^D)}\to0,
    \label{eq:step3-global-velocity-small}
\end{equation}
and
\begin{equation}
    \lim_{n\to\infty}
    \sup_{I\subset[\sigma_n,\tau_n]}
    \frac1{|I|}
    \int_I\int_0^\infty
    |\partial_t\widetilde u_n(t,r)|^2r^{D-1}\,drdt
    =0.
    \label{eq:step3-global-maximal-kinetic}
\end{equation}
We now apply Proposition~\ref{LPF for DNLW} to the bounded sequence
\[
    \vec{\widetilde u}_n(t_n)\in H^1_{\rm rad}\times L^2_{\rm rad}.
\]
After passing to a subsequence, for every fixed \(J\) we have
\begin{equation}
    \vec{\widetilde u}_n(t_n)
    =
    \sum_{j=1}^J(\widetilde V_{0,n}^j,\widetilde V_{1,n}^j)
    +
    (\widetilde w_{0,n}^J,\widetilde w_{1,n}^J).
    \label{eq:step3-truncated-profile-decomp}
\end{equation}
The orthogonality, energy decoupling, and remainder smallness are those of
Proposition~\ref{LPF for DNLW}. In particular,
\[
    \lim_{J\to J_0}\limsup_{n\to\infty}
    \left\|
        \pi_1S_\alpha(t)(\widetilde w_{0,n}^J,\widetilde w_{1,n}^J)
    \right\|_{S_D([0,\infty))}
    =0.
\]
Although the small-scale limiting profiles are homogeneous wave profiles in
\(\dot H^1\times L^2\), the decomposition is applied to a bounded sequence in
\(H^1\times L^2\); the \(H^1\)-realization of each small-scale profile is the
one constructed in Proposition~\ref{LPF for DNLW}, with the high-frequency
cut-off \(P_n^j=P_{>(\lambda_n^j)^\theta}\).
Compared with the free wave decomposition used in \cite{JL}, the damped profile
decomposition has two simplifications: the physical-time escaping branch is
absent by the decay of the damped flow, and the large-scale branch is absent
because the scaling is one-sided. Hence the only possible nontrivial profiles in
\eqref{eq:step3-truncated-profile-decomp} are the fixed-scale damped profile and
small-scale wave profiles with
\[
    \lambda_n^j\to0,\qquad
    s_n^j=0
    \quad\text{or}\quad
    s_n^j\to+\infty.
\]
\textbf{Step 4. Exclusion of small-scale profiles with escaping wave time.}
We now rule out the small-scale profiles in
\eqref{eq:step3-truncated-profile-decomp} for which \(s_n^j\to+\infty\).
Suppose, toward a contradiction, that such a nonzero profile exists. Since the
physical-time escaping branch is absent in Proposition~\ref{LPF for DNLW},
after passing to a subsequence we may assume
\begin{equation*}
    \lambda_n^js_n^j\to T_j\in[0,\infty).
\end{equation*}
We shall prove that
\[
    (\phi^j,\psi^j)=(0,0),
\]
contradicting the nontriviality of the profile.
We will use the following claim.
\begin{claim}
\label{claim:profile-stationarity}
Let \(\widetilde u_n\) be a sequence of radial solutions to the damped equation
on intervals containing \(t_n\), with uniformly bounded energy. Assume that
\begin{equation}
    \lim_{n\to\infty}
    \sup_{I\subset[\sigma_n,\tau_n]}
    \frac1{|I|}
    \int_I
    \|\partial_t\widetilde u_n(t)\|_{L^2}^2\,dt
    =0.
    \label{eq:claim-global-kinetic-small}
\end{equation}
Let a nonlinear profile in the damped profile decomposition of
\(\vec{\widetilde u}_n(t_n)\) be considered on a compact profile-time interval
\(K\), and assume that the corresponding physical time windows are contained in
\([\sigma_n,\tau_n]\) for all large \(n\). Then the associated nonlinear profile
is stationary on \(K\). 
\end{claim}
An analogous result was established for wave equation, see \cite{DKM 2012}. 
Before proving the claim, we finish Step~4. The estimate
\eqref{eq:step3-global-maximal-kinetic} gives
\eqref{eq:claim-global-kinetic-small}. Moreover, since
\(\lambda_n^js_n^j\to T_j<\infty\), and \(t_n\) was chosen in Step~2 away from
the endpoints of \([\sigma_n,\tau_n]\) by a distance tending to infinity, all
compact profile windows are contained in \([\sigma_n,\tau_n]\) for large \(n\).
Claim~\ref{claim:profile-stationarity} then implies that the nonlinear wave
profile \(U^j\) associated with the escaping small-scale profile is stationary.
But by the definition of the nonlinear profile in the case \(s_n^j\to+\infty\),
\(U^j\) scatters backward to the free wave \(U_L^j\). A nonzero stationary
finite-energy solution cannot scatter on a half-line. Hence \(U^j\equiv0\), and
therefore
\[
    (\phi^j,\psi^j)=(0,0),
\]
contradicting the nontriviality of the profile. Thus no nonzero small-scale
profile with \(s_n^j\to+\infty\) occurs.
\begin{proof}[Proof of Claim~\ref{claim:profile-stationarity}]
We prove the claim first for a small-scale wave profile. By translating time,
we may assume \(t_n=0\). Let \(U^j\) be the associated nonlinear wave profile.
For a compact interval \(K\) contained in its lifespan, set
\[
    U_n^j(s,y)
    :=
    (\lambda_n^j)^{\frac{D-2}{2}}
    \widetilde u_n\bigl(\lambda_n^j(s_n^j+s),\lambda_n^jy\bigr),
    \qquad s\in K.
\]
Here \(s_n^j=0\) in the centered case, while \(s_n^j\to+\infty\) in the escaping
case. Then
\[
    \partial_{ss}U_n^j-\Delta_yU_n^j
    +\alpha\lambda_n^j\partial_sU_n^j
    =
    |U_n^j|^{\frac4{D-2}}U_n^j.
\]
Since \(\alpha\lambda_n^j\to0\), the small-scale nonlinear profile
approximation in Proposition~\ref{prop:nonlinear-profile} gives
\begin{equation}
    \partial_sU_n^j\rightharpoonup \partial_sU^j
    \qquad
    \text{weakly in }L^2(K\times\mathbb R^D).
    \label{eq:claim-small-weak}
\end{equation}
Indeed, in the variables of the \(j\)-th profile, the corresponding nonlinear
profile gives the limit \(U^j\), all other profiles vanish weakly by parameter
orthogonality, and the linear remainder has no weak profile. The damping error is
\[
    \alpha\lambda_n^j\partial_sU_n^j,
\]
which is \(o_n(1)\) in the perturbative norm, as in the small-scale part of
Proposition~\ref{prop:nonlinear-profile}.
By the assumption of the claim, the physical time interval $\lambda_n^j(s_n^j+K)$
is contained in \([\sigma_n,\tau_n]\) for all large \(n\). Hence
\eqref{eq:claim-global-kinetic-small} gives
\begin{equation}
\begin{aligned}
    \frac1{|K|}
    \int_K
    \|\partial_sU_n^j(s)\|_{L^2}^2\,ds
    =
    \frac1{\lambda_n^j|K|}
    \int_{\lambda_n^j(s_n^j+K)}
    \|\partial_t\widetilde u_n(t)\|_{L^2}^2\,dt
  \to0.
\end{aligned}
    \label{eq:claim-small-kinetic}
\end{equation}
Combining \eqref{eq:claim-small-weak} and \eqref{eq:claim-small-kinetic}, we
obtain
\[
    \partial_sU^j=0
    \qquad
    \text{in }L^2(K\times\mathbb R^D).
\]
Thus \(U^j\) is stationary on \(K\).
The fixed-scale damped profile is treated in the same way, without rescaling. If
\(U^0\) denotes the fixed-scale nonlinear damped profile, then the fixed-scale
part of Proposition~\ref{prop:nonlinear-profile} gives
\[
    \partial_t\widetilde u_n(t_n+t)
    \rightharpoonup
    \partial_tU^0(t)
    \qquad
    \text{weakly in }L^2(K\times\mathbb R^D),
\]
for every compact \(K\) contained in the lifespan of \(U^0\). By
\eqref{eq:claim-global-kinetic-small},
\[
    \frac1{|K|}
    \int_K
    \|\partial_t\widetilde u_n(t_n+t)\|_{L^2}^2\,dt
    \to0.
\]
Therefore \(\partial_tU^0=0\) on \(K\). This proves the claim. \end{proof}

\textbf{Step 5. Identification of the centered profiles.}
By Step~4, all nonzero small-scale profiles in
\eqref{eq:step3-truncated-profile-decomp} are centered:
\[
    \lambda_n^j\to0,\qquad s_n^j=0.
\]
The fixed-scale damped profile, if present, is also centered in the sense that no
time translation is involved. Applying Claim~\ref{claim:profile-stationarity} to
each remaining nonlinear profile, we obtain that all of them are stationary.
Hence, 
we may relabel the remaining nonzero profiles so that, for every fixed \(J\),
\begin{equation*}
    \vec{\widetilde u}_n(t_n)
    =
    \sum_{j=1}^{J}
    \bigl(
        \iota_j W_{\lambda_{j,n}},0
    \bigr)
    +
    (\widetilde w_{0,n}^J,\widetilde w_{1,n}^J)
    +
    o_n(1)
    \qquad\text{in }H^1\times L^2.
\end{equation*}
Here the scales are asymptotically orthogonal:
\[
    \frac{\lambda_{j,n}}{\lambda_{k,n}}
    +
    \frac{\lambda_{k,n}}{\lambda_{j,n}}
    \to+\infty,
    \qquad j\neq k.
\]
For the small-scale profiles this follows from the scale orthogonality in
Proposition~\ref{LPF for DNLW}; the fixed-scale bubble, if present, is
included by taking \(\lambda_{j,n}\equiv \mu_j\).
In the small-scale case, the high-frequency cut-off in
Proposition~\ref{LPF for DNLW} does not affect the bubble:
\[
    D_{\lambda_n^j}P_n^jQ^j
    =
    D_{\lambda_n^j}Q^j+o_{H^1}(1),
    \qquad
    \dot D_{\lambda_n^j}P_n^j\partial_tU^j(0)=0,
\]
since \(P_n^j=P_{>(\lambda_n^j)^\theta}\to I\) on the profile \(Q^j\) and
\(Q^j\in H^1\). 

\textbf{Step 6. Vanishing of the remaining linear part.}
Let
\[
    \vec{\widetilde u}_n(t_n)
    =
    \sum_{j=1}^{J}
    \bigl(\iota_jW_{\lambda_{j,n}},0\bigr)
    +
    (\widetilde w_{0,n},\widetilde w_{1,n})
    +
    o_n(1)
    \quad\text{in }H^1\times L^2
\]
be the decomposition obtained in Step~5, after all nonzero bubbles have been
extracted. We choose \(r_n\to+\infty\) such that
\[
    r_n\leq r_{2,n},\qquad r_n\leq R_{1,n},
\]
and, after passing to a subsequence,
\begin{equation}
    \|\vec u_n(t_n)\|_{\mathcal E(A_n^{-1}\rho_nr_n,A_n\rho_nr_n)}
    \to0
    \label{eq:step6-annulus-small}
\end{equation}
for some \(1\ll A_n\ll r_n\),
\begin{equation}
    \rho_nr_n
    \int_0^{\rho_nR_n}|\partial_tu_n(t_n,r)|^2r^{D-1}\,dr
    \to0,
    \label{eq:step6-weighted-kinetic}
\end{equation}
and
\begin{equation}
    \frac{\lambda_{j,n}}{\rho_nr_n}\to0
    \qquad
    \text{for every }1\leq j\leq J.
    \label{eq:step6-radius-dominates-bubbles}
\end{equation}
This follows from the same annular pigeonhole argument as in Step~3, the kinetic
smallness at the good time, and the fact that there are only finitely many
nonzero bubbles.
Set
\[
    K_R(v):=
    \int_0^\infty
    \left(
        |\partial_rv(r)|^2-|v(r)|^{\frac{2D}{D-2}}
    \right)
    \chi_R(r)r^{D-1}\,dr.
\]
Applying the localized virial inequality from Step~2 with
\(\widetilde R_n=r_n\), and integrating by parts as in Step~8 of
\cite{JL}, we obtain
\begin{equation}
    \limsup_{n\to\infty}
    \left[
        K_{\rho_nr_n}(u_n(t_n))
        -
        \alpha\int_0^\infty
        \partial_tu_n(t_n)\underline{\Lambda} u_n(t_n)
        \chi_{\rho_nr_n}r^{D-1}\,dr
    \right]\leq0.
    \label{eq:step6-virial-after-ibp}
\end{equation}
The boundary terms produced by the integration by parts vanish by
\eqref{eq:step6-annulus-small}. The damping term in
\eqref{eq:step6-virial-after-ibp} also vanishes. Indeed, by Cauchy--Schwarz, the
uniform energy bound, and \eqref{eq:step6-weighted-kinetic},
\[
\left|
    \int_0^\infty
    \partial_tu_n(t_n)\underline{\Lambda} u_n(t_n)
    \chi_{\rho_nr_n}r^{D-1}\,dr
\right|
\lesssim
\left(
    \rho_nr_n
    \int_0^{\rho_nR_n}|\partial_tu_n(t_n,r)|^2r^{D-1}\,dr
\right)^{1/2}
\to0.
\]
Hence
\begin{equation}
    \limsup_{n\to\infty}K_{\rho_nr_n}(u_n(t_n))\leq0.
    \label{eq:step6-K-nonpositive}
\end{equation}
Since \(r_n\leq r_{2,n}\), we have
\[
    \widetilde u_n(t_n,r)=u_n(t_n,r)
    \qquad\text{for }r\leq\rho_nr_n.
\]
Using \eqref{eq:step6-radius-dominates-bubbles}, the scale orthogonality, and
the identity $K(W)=0,$
we get
\[
    K_{\rho_nr_n}
    \left(
        \sum_{j=1}^{J}\iota_jW_{\lambda_{j,n}}
    \right)
    \to0,
\]
and the cross terms between distinct bubbles vanish. Therefore
\eqref{eq:step6-K-nonpositive} and the decomposition in Step~5 imply
\begin{equation}
    \limsup_{n\to\infty}
    K_{\rho_nr_n}(\widetilde w_{0,n})\leq0.
    \label{eq:step6-K-remainder}
\end{equation}
The final remainder has no nonzero profile left; by the refined Sobolev estimate
used in the proof of Proposition~\ref{LPF for DNLW}, 
\[
    \|\widetilde w_{0,n}\|_{L^{\frac{2D}{D-2}}}\to0.
\]
Thus \eqref{eq:step6-K-remainder} yields
\[
    \int_0^{\rho_nr_n}|\partial_r\widetilde w_{0,n}(r)|^2r^{D-1}\,dr\to0.
\]
Moreover, from \eqref{eq:step3-global-velocity-small} and the fact that all
profiles in Step~5 have zero velocity,
\[
    \int_0^{\rho_nr_n}|\widetilde w_{1,n}(r)|^2r^{D-1}\,dr\to0.
\]
Consequently,
\[
    \|(\widetilde w_{0,n},\widetilde w_{1,n})\|_{\mathcal E(r\leq\rho_nr_n)}
    \to0.
\]
Combining this with the bubble decomposition in Step~5, the scale orthogonality,
and the identity \(u_n(t_n)=\widetilde u_n(t_n)\) on
\(r\leq\rho_nr_n\), we conclude that
\[
    \delta_{\rho_nr_n}(u_n(t_n))\to0.
\]
This completes the proof in Case II.

For Case I, the only modification is in Steps 1 and 2. From \eqref{eq:seq-comp-kinetic-finite}, after
choosing \(R_{1,n}\to+\infty\) sufficiently slowly, one can choose
\[
    \sigma_n\in[0,\rho_n/3],
    \qquad
    \tau_n\in[2\rho_n/3,\rho_n],
\]
so that the endpoint kinetic energies satisfy the same estimates as in
\eqref{eq:average-virial-small}. The localized virial identity and the selection lemma then give
the analogues of \eqref{eq:seq-good-time-kinetic},
\eqref{eq:seq-maximal-kinetic}, and \eqref{eq:seq-good-time-virial}. Once these
three conclusions are obtained, Steps 3--6 are unchanged.
\end{proof}

	\subsection{Extraction of the radiation}
We now extract the radiation term. We first consider the finite-time case.
\begin{prop}[Radiation in the finite-time blow-up case]
\label{radiation in finite-time case}
Let \(\boldsymbol u(t)\in\mathcal E\) be a solution to \eqref{DNLW} on
\([0,T)\), \(T<\infty\), satisfying the type-II bound \eqref{Type 2}. Then there
exists \(\boldsymbol u_0^*\in\mathcal E\) such that
\[
    \boldsymbol u(t)\rightharpoonup \boldsymbol u_0^*
    \quad\text{weakly in }\mathcal E
    \qquad\text{as }t\to T,
\]
and, for every \(\phi\in C_0^\infty(0,\infty)\),
\[
    \left\|
        \phi(\boldsymbol u(t)-\boldsymbol u_0^*)
    \right\|_{\mathcal E}\to0
    \qquad\text{as }t\to T.
\]
Moreover, let \(\boldsymbol u^*(t)\) be the solution of \eqref{DNLW} with
terminal data
\[
    \boldsymbol u^*(T)=\boldsymbol u_0^* .
\]
Then \(\boldsymbol u^*(t)\) is defined on \([T-T_0,T]\) for some \(T_0>0\), and
\[
    \boldsymbol u(t,r)=\boldsymbol u^*(t,r)
    \qquad
    \text{for } r\ge T-t,\quad t\in[T-T_0,T).
\]
Finally,
\[
    \lim_{t\to T}E(\boldsymbol u(t)-\boldsymbol u^*(t))
    =
    \lim_{t\to T}E(\boldsymbol u(t))-E(\boldsymbol u_0^*).
\]
\end{prop}
    \begin{proof}
The proof is the same as the radiation extraction in
\cite[Theorem~3.2]{DKM 2011}, and we recall why it applies to the
damped equation.
The argument in \cite{DKM 2011} uses only the type-II bound in the
energy space, weak compactness, local well-posedness, finite speed of
propagation, and the decoupling of the energy. All these ingredients remain
valid for equation (\ref{DNLW}).
Indeed, the damping term is lower order for the local theory and does not affect
finite speed of propagation. Moreover, the energy identity
\[
    \frac{d}{dt}E(u(t))=-\alpha\|\partial_tu(t)\|_{L^2}^2
\]
gives the existence of the energy limit as \(t\to T\).
Thus, by the same compactness argument as in \cite[Theorem~3.2]{DKM 2011},
there exists $\boldsymbol u^{\,*}\in\mathcal E$
such that
\[
    \boldsymbol u(t)\rightharpoonup \boldsymbol u^{\,*}
    \qquad\text{weakly in }\mathcal E
    \quad\text{as }t\to T.
\]
Furthermore, for every \(\phi\in C_0^\infty(0,\infty)\),
\[
    \phi\bigl(\boldsymbol u(t)-\boldsymbol u^{\,*}\bigr)\to0
    \qquad\text{strongly in }\mathcal E .
\]
Let \(\boldsymbol u^*(t)\) be the solution of the damped equation with terminal
data \(\boldsymbol u^*(T)=\boldsymbol u^{\,*}\).
By local well-posedness, \(u^*\) is defined on \([T-T_0,T]\) for some
\(T_0>0\). The strong convergence away from the origin and finite speed of
propagation imply
\[
    u(t,r)=u^*(t,r)
    \qquad
    \text{for } r\geq T-t,\quad t\in[T-T_0,T).
\]
Finally, the energy decoupling follows from the weak convergence in the quadratic
part and the standard decoupling of the nonlinear potential:
\[
    \lim_{t\to T}E\bigl(u(t)-u^*(t)\bigr)
    =
    \lim_{t\to T}E(u(t))-E(u^*(T)).
\]
This proves the proposition.
\end{proof}

 In contrast with the finite-time case, the damping eliminates the global
radiation: the exterior energy of a global type-II solution tends to zero as
\(t\to+\infty\).
\begin{prop}[Vanishing of the radiation term in the global case]
\label{global case radiation}
Let \(\boldsymbol u(t)\in\mathcal E\) be a solution to \eqref{DNLW} on
\([T,\infty)\), \(T\ge0\), satisfying the type-II bound \eqref{Type 2}. Then, for
every \(R>0\),
\[
    \lim_{t\to\infty}
    \int_{t-R}^{\infty}
    \left[
        |\partial_tu(t,r)|^2
        +
        |\partial_ru(t,r)|^2
        +
        \frac{|u(t,r)|^2}{r^2}
    \right]r^{D-1}\,dr
    =0.
\]
\end{prop}
\begin{proof}
By the energy identity and the type-II bound,
\[
    \int_T^\infty \|\partial_tu(t)\|_{L^2}^2\,dt<\infty .
\]
Let $ h(t):=\|\partial_tu(t)\|_{L^2}^2 $.
By the one-sided Hardy--Littlewood maximal inequality, we can choose a sequence
\(s_n\to+\infty\) such that
\begin{equation*}
    \|\partial_tu(s_n)\|_{L^2}\to0,
\end{equation*}
and
\begin{equation}
    \sup_{\lambda>0}\frac1\lambda
    \int_{s_n}^{s_n+\lambda}
    \|\partial_tu(t)\|_{L^2}^2\,dt
    \to0.
    \label{eq:global-good-time-maximal}
\end{equation}
We apply Proposition~\ref{LPF for DNLW} to the bounded sequence
\[
    \vec u(s_n)\in H^1_{\rm rad}\times L^2_{\rm rad}.
\]
Using the profile-stationarity argument of Claim~\ref{claim:profile-stationarity},
with \([\sigma_n,\tau_n]\) replaced by \([s_n,\infty)\), every nonzero profile is
stationary. Indeed, the proof of that claim only uses the averaged kinetic
smallness on the corresponding profile windows, which is supplied here by
\eqref{eq:global-good-time-maximal}. Thus, after discarding zero profiles and
relabeling, we obtain
\begin{equation*}
    \boldsymbol u(s_n)
    =
    \sum_{j=1}^{J}
    \bigl(
        \iota_jW_{\lambda_{j,n}},0
    \bigr)
    +
    (w_{0,n},w_{1,n})
    +
    o_n(1)
    \quad\text{in }H^1\times L^2,
\end{equation*}
where \(J<\infty\), \(\iota_j\in\{-1,1\}\), and the scales are asymptotically
orthogonal. Moreover,
\begin{equation}
    \left\|
        \pi_1S_\alpha(t)(w_{0,n},w_{1,n})
    \right\|_{S_D([0,\infty))}
    \to0.
    \label{eq:global-linear-rem-small}
\end{equation}
Since all scales \(\lambda_{j,n}\) are either fixed or tend to \(0\), we have
\begin{equation}
    \frac{\lambda_{j,n}}{s_n}\to0
    \qquad
    \text{for every }1\leq j\leq J.
    \label{eq:global-bubbles-inside-cone}
\end{equation}
Set
\[
    B_n(x):=\sum_{j=1}^{J}\iota_jW_{\lambda_{j,n}}(x),
\]
and let $\boldsymbol z_n(t):=S_\alpha(t)(w_{0,n},w_{1,n})$, $ z_n:=\pi_1\boldsymbol z_n$.
Define the nonlinear error
\[
    r_n(t,x):=u(s_n+t,x)-B_n(x)-z_n(t,x).
\]
Then 
\begin{equation}
    \boldsymbol r_n(0)=o_n(1)
    \quad\text{in }H^1\times L^2.
    \label{eq:global-r-initial-small} 
\end{equation}
Furthermore \(r_n\) solves
\[
    \partial_{tt}r_n-\Delta r_n+\alpha\partial_tr_n
    =
    \mathcal N_n,
\]
where
\[
    \mathcal N_n
    =
    f(B_n+z_n+r_n)-\sum_{j=1}^{J}f(\iota_jW_{\lambda_{j,n}}).
\]
We claim that for every fixed \(R>0\),
\begin{equation}
    \sup_{t\ge0}
    \|\boldsymbol r_n(t)\|_{\mathcal E(|x|>s_n+t-R)}
    \to0.
    \label{eq:global-exterior-r-small}
\end{equation}
Let $\Omega_{n,R}
    :=
    \{(t,x): t\ge0,\ |x|>s_n+t-R\}.$
For \(T_1>0\), denote
\[
    \Omega_{n,R}^{T_1}:=\Omega_{n,R}\cap([0,T_1]\times\mathbb R^D).
\]
By the finite speed of propagation and the Strichartz estimates for the damped
linear flow,
\begin{equation}
\begin{aligned}
    &\|r_n\|_{X_D(\Omega_{n,R}^{T_1})}
    +
    \sup_{0\le t\le T_1}
    \|\boldsymbol r_n(t)\|_{\mathcal E(|x|>s_n+t-R)}
    \\
    &\hspace{2cm}
    \lesssim
    \|\boldsymbol r_n(0)\|_{H^1\times L^2}
    +
    \|\mathcal N_n\|_{N_D(\Omega_{n,R}^{T_1})}.
\end{aligned}
    \label{eq:global-exterior-strichartz}
\end{equation}
We decompose
\[
\begin{aligned}
    \mathcal N_n
    &=
    \bigl[
        f(B_n+z_n+r_n)-f(B_n+z_n)
    \bigr]
   +
    \bigl[
        f(B_n+z_n)-f(B_n)
    \bigr]
    \\
    &\quad+
    \biggl[
        f(B_n)-\sum_{j=1}^{J}f(\iota_jW_{\lambda_{j,n}})
    \biggr].
\end{aligned}
    \label{eq:global-nonlinear-decomp}
\]
We first record the smallness of the terms independent of \(r_n\). From
\eqref{eq:global-bubbles-inside-cone}, the decay of \(W\), and the scale
orthogonality, we have
\begin{equation}
    \|B_n\|_{X_D(\Omega_{n,R})}\to0,
    \qquad
    \left\|
        f(B_n)-\sum_{j=1}^{J}f(\iota_jW_{\lambda_{j,n}})
    \right\|_{N_D(\Omega_{n,R})}\to0.
    \label{eq:global-bubble-tail-small}
\end{equation}
Moreover, by \eqref{eq:global-linear-rem-small} and the nonlinear estimates in
the \(X_D\)-\(N_D\) framework,
\begin{equation}
    \|f(B_n+z_n)-f(B_n)\|_{N_D(\Omega_{n,R})}\to0.
    \label{eq:global-z-interaction-small}
\end{equation}
Finally, the same nonlinear difference estimate gives, uniformly in \(T_1\),
\begin{equation}
\begin{aligned}
    &
    \|f(B_n+z_n+r_n)-f(B_n+z_n)\|_{N_D(\Omega_{n,R}^{T_1})}
    \\
    &\qquad
    \le
    o_n(1)\,
    \|r_n\|_{X_D(\Omega_{n,R}^{T_1})}
    +
    C\|r_n\|_{X_D(\Omega_{n,R}^{T_1})}^{1+\frac4{D-2}}.
\end{aligned}
    \label{eq:global-r-perturbative}
\end{equation}
Combining
\eqref{eq:global-exterior-strichartz}--\eqref{eq:global-r-perturbative}, and
using \eqref{eq:global-r-initial-small}, we obtain
\[
\begin{aligned}
    &\|r_n\|_{X_D(\Omega_{n,R}^{T_1})}
    +
    \sup_{0\le t\le T_1}
    \|\vec r_n(t)\|_{\mathcal E(|x|>s_n+t-R)}
    \\
    &\qquad
    \le
    o_n(1)
    +
    o_n(1)\,
    \|r_n\|_{X_D(\Omega_{n,R}^{T_1})}
    +
    C\|r_n\|_{X_D(\Omega_{n,R}^{T_1})}^{1+\frac4{D-2}} .
\end{aligned}
\]
A standard continuity argument, independent of \(T_1\), yields
\[
    \|r_n\|_{X_D(\Omega_{n,R})}
    +
    \sup_{t\ge0}
    \|\boldsymbol r_n(t)\|_{\mathcal E(|x|>s_n+t-R)}
    \to0.
\]
This proves \eqref{eq:global-exterior-r-small}.
We now finish the proof of the proposition. Fix \(R>0\) and \(\eta>0\). Choose
\(n\) sufficiently large so that
\[
    \sup_{t\ge0}
    \|\boldsymbol r_n(t)\|_{\mathcal E(|x|>s_n+t-R)}
    <\eta.
\]
For \(t\ge0\), using
\[
    u(s_n+t)=B_n+z_n(t)+r_n(t),
\]
we estimate the exterior energy in the region \(|x|>s_n+t-R\). By
\eqref{eq:global-bubbles-inside-cone}, for this fixed \(n\),
\[
    \lim_{t\to\infty}
    \|B_n\|_{\mathcal E(|x|>s_n+t-R)}=0.
\]
Also, since \(z_n\) solves the homogeneous damped wave equation,
\[
    \lim_{t\to\infty}
    \|\boldsymbol z_n(t)\|_{H^1\times L^2}=0.
\]
Therefore
\[
    \limsup_{t\to\infty}
    \|\boldsymbol u(s_n+t)\|_{\mathcal E(|x|>s_n+t-R)}
    \lesssim \eta.
\]
Since \(\eta>0\) is arbitrary, and \(s_n+t\to+\infty\), we conclude that
\[
    \lim_{t\to\infty}
    \int_{t-R}^{\infty}
    \left[
        |\partial_tu(t,r)|^2
        +
        |\partial_ru(t,r)|^2
        +
        \frac{|u(t,r)|^2}{r^2}
    \right]r^{D-1}\,dr
    =0.
\]
This proves the proposition.
\end{proof}

\subsection{The sequential decomposition}\label{The sequential decomposition}
We first record the non-concentration of energy in the self-similar region.  The
corresponding statements for the undamped equation are proved by localized
energy identities and finite speed of propagation.  These arguments are stable
under the addition of the damping term: in the finite-time case the damping
contribution is a lower-order error on a shrinking time interval, while in the
global case it is controlled by the dissipation identity
\[
    \int_T^\infty \|\partial_tu(t)\|_{L^2}^2\,dt<\infty .
\]
Thus the proofs from \cite{CT 1993,JK 2017,JL} apply with only these harmless
modifications.
\begin{prop}[No self-similar concentration for finite-time blow-up solutions]
\label{prop:no-self-similar-finite}
Let \(u(t)\in\mathcal E\) be a solution to \eqref{DNLW} defined on
\([0,T)\), \(T<\infty\), and satisfying the type-II bound. Then, for any
\(\lambda\in(0,1)\),
\[
    \lim_{t\to T^-}
    \int_{\lambda(T-t)}^{T-t}
    \left[
        |\partial_tu(t,r)|^2
        +
        |\partial_ru(t,r)|^2
        +
        \frac{|u(t,r)|^2}{r^2}
    \right]r^{D-1}\,dr
    =0.
\]
\end{prop}
\begin{prop}[No self-similar concentration for global solutions]
\label{prop:no-self-similar-global}
Let \(u(t)\in\mathcal E\) be a solution to \eqref{DNLW} defined on
\([T,\infty)\), \(T\geq0\), and satisfying the type-II bound. Then, for any
\(\gamma\in(0,1)\),
\[
    \lim_{R\to\infty}\limsup_{t\to\infty}
    \int_{\gamma t}^{t-R}
    \left[
        |\partial_tu(t,r)|^2
        +
        |\partial_ru(t,r)|^2
        +
        \frac{|u(t,r)|^2}{r^2}
    \right]r^{D-1}\,dr
    =0.
\]
\end{prop}
\begin{Rmk}
For completeness, we recall the only point in which the damped equation differs
from the undamped one.  In the localized multiplier identities used in the
self-similar region, the additional term is always of the form
\[
    \alpha\int \partial_tu\,\mathcal M[u],
\]
where \(\mathcal M[u]\) is a localized first-order expression controlled by the
energy norm.  In the finite-time case, Cauchy--Schwarz, Hardy's inequality, and
the type-II bound show that this term is \(o(1)\) on the shrinking interval
\([t,T)\).  In the global case, the same estimate is controlled by the
integrability of \(\|\partial_tu(t)\|_{L^2}^2\).  Hence the localized flux
arguments proving the two propositions for the wave equation carry over without
any further change.
\end{Rmk}
The self-similar non-concentration will be used through the following averaged
virial consequence inside the backward light cone.

\begin{prop}[Time-averaged vanishing of kinetic energy]
\label{prop:avg-kinetic-finite}
Let \(u(t)\in\mathcal E\) be a type-II solution to \eqref{DNLW} on
\([0,T)\), \(T<\infty\). Then
\begin{equation}
    \lim_{t\to T^-}
    \frac1{T-t}
    \int_t^T
    \int_{|x|\leq T-s}
    |\partial_tu(s,x)|^2\,dxds
    =0,
    \label{eq:avg-kinetic-finite}
\end{equation}
and
\begin{equation}
    \lim_{t\to T^-}
    \frac1{T-t}
    \int_t^T
    \int_{|x|\leq T-s}
    \left(
        |u(s,x)|^{\frac{2D}{D-2}}
        -
        |\nabla u(s,x)|^2
    \right)\,dxds
    =0.
    \label{eq:avg-K-finite}
\end{equation}
\end{prop}
\begin{proof}
Let $R(t):=T-t$,
and choose a radial cut-off \(\phi\in C_0^\infty(B_1)\) such that
\(\phi=1\) on \(B_{1/2}\). Set
\[
    \phi_t(x):=\phi\left(\frac{x}{R(t)}\right),
    \qquad
    \Lambda u:=x\cdot\nabla u+\frac{D-2}{2}u.
\]
We first prove the averaged kinetic estimate. Define
\[
    \mathcal M(t)
    :=
    e^{\alpha t}
    \int_{\mathbb R^D}
    \partial_tu(t,x)\Lambda u(t,x)\phi_t(x)\,dx .
\]
A direct computation using the equation of $u$
and the critical scaling identity gives
\begin{equation}
    \frac{d}{dt}\mathcal M(t)
    =
    -e^{\alpha t}
    \int_{\mathbb R^D}
    |\partial_tu(t,x)|^2\phi_t(x)\,dx
    +
    e^{\alpha t}\operatorname{Err}_1(t),
    \label{eq:avg-kinetic-identity}
\end{equation}
where \(\operatorname{Err}_1(t)\) is supported in the self-similar annulus
\[
    \frac{R(t)}2\leq |x|\leq R(t)
\]
and satisfies
\begin{equation}
    |\operatorname{Err}_1(t)|
    \lesssim
    \int_{\frac{R(t)}2\leq |x|\leq R(t)}
    \left(
        |\partial_tu|^2+|\nabla u|^2+\frac{|u|^2}{|x|^2}
        +|u|^{\frac{2D}{D-2}}
    \right)(t,x)\,dx.
    \label{eq:avg-error1-bound}
\end{equation}
The factor \(e^{\alpha t}\) is inserted exactly to cancel the damping term
\(-\alpha\partial_tu\) in the derivative of the localized scaling functional.
We next record the endpoint estimate
\begin{equation}
    \frac{|\mathcal M(t)|}{R(t)}\to0
    \qquad\text{as }t\to T^-.
    \label{eq:avg-M-endpoint}
\end{equation}
Indeed, for any \(\lambda\in(0,1/2)\), split the integral defining
\(\mathcal M(t)\) into the regions
\[
    |x|\leq \lambda R(t),
    \qquad
    \lambda R(t)\leq |x|\leq R(t).
\]
On the first region, Cauchy--Schwarz, Hardy's inequality, and the type-II bound
give
\[
    \frac1{R(t)}
    \int_{|x|\leq \lambda R(t)}
    |\partial_tu|\,\bigl(|x||\nabla u|+|u|\bigr)\,dx
    \lesssim
    \lambda.
\]
On the second region, the same estimate together with
Proposition~\ref{prop:no-self-similar-finite} gives a term tending to \(0\) as
\(t\to T^-\), for every fixed \(\lambda\). Letting then \(\lambda\to0\) proves
\eqref{eq:avg-M-endpoint}.
Integrating \eqref{eq:avg-kinetic-identity} from \(t\) to \(T\), dividing by
\(R(t)\), and using \eqref{eq:avg-M-endpoint},
\eqref{eq:avg-error1-bound}, and Proposition~\ref{prop:no-self-similar-finite},
we obtain
\[
    \lim_{t\to T^-}
    \frac1{R(t)}
    \int_t^T
    \int_{\mathbb R^D}
    |\partial_tu(s,x)|^2\phi_s(x)\,dxds
    =0.
\]
Since \(\phi_s=1\) on \(|x|\leq R(s)/2\), and the remaining annulus
\(R(s)/2\leq |x|\leq R(s)\) is controlled by
Proposition~\ref{prop:no-self-similar-finite}, this proves
\eqref{eq:avg-kinetic-finite}.
It remains to prove \eqref{eq:avg-K-finite}. Define
\[
    \mathcal P(t)
    :=
    e^{\alpha t}
    \int_{\mathbb R^D}
    \partial_tu(t,x)u(t,x)\phi_t(x)\,dx .
\]
Another direct computation gives
\begin{equation}
\begin{aligned}
    \frac{d}{dt}\mathcal P(t)
    &=
    e^{\alpha t}
    \int_{\mathbb R^D}
    \left(
        |\partial_tu|^2
        -
        |\nabla u|^2
        +
        |u|^{\frac{2D}{D-2}}
    \right)(t,x)\phi_t(x)\,dx
    +
    e^{\alpha t}\operatorname{Err}_2(t),
\end{aligned}
    \label{eq:avg-K-identity}
\end{equation}
where \(\operatorname{Err}_2(t)\) is supported in $ \frac{R(t)}2\leq |x|\leq R(t)$
and satisfies the same type of bound as \(\operatorname{Err}_1(t)\):
\[
    |\operatorname{Err}_2(t)|
    \lesssim
    \int_{\frac{R(t)}2\leq |x|\leq R(t)}
    \left(
        |\partial_tu|^2+|\nabla u|^2+\frac{|u|^2}{|x|^2}
        +|u|^{\frac{2D}{D-2}}
    \right)(t,x)\,dx.
\]
Moreover,
\[
    \frac{|\mathcal P(t)|}{R(t)}\to0
    \qquad\text{as }t\to T^-,
\]
by the same inner-region and self-similar-annulus estimate used for
\(\mathcal M(t)\).
Integrating \eqref{eq:avg-K-identity} from \(t\) to \(T\), dividing by
\(R(t)\), and using the already proved kinetic estimate
\eqref{eq:avg-kinetic-finite}, the endpoint estimate for \(\mathcal P\), and
Proposition~\ref{prop:no-self-similar-finite}, we get
\[
    \lim_{t\to T^-}
    \frac1{R(t)}
    \int_t^T
    \int_{\mathbb R^D}
    \left(
        |u(s,x)|^{\frac{2D}{D-2}}
        -
        |\nabla u(s,x)|^2
    \right)\phi_s(x)\,dxds
    =0.
\]
Finally, the part of the cone not covered by \(\phi_s\) is again contained in
the self-similar annulus and is controlled by
Proposition~\ref{prop:no-self-similar-finite}. This proves
\eqref{eq:avg-K-finite}.
\end{proof}

In the global setting we do not need a separate analogue of
Proposition~\ref{prop:avg-kinetic-finite}: the dissipation identity gives the
stronger estimate
\[
    \int_{T_0}^{\infty}\|\partial_tu(t)\|_{L^2}^2\,dt<\infty .
\]
We now complete the proof of the sequential decomposition. 

First, we deduce Propositions~\ref{Properties of the radiation} and
\ref{Properties of the radiation global case}. In the finite-time case, let
\(R(t):=T_+-t\). Proposition~\ref{radiation in finite-time case} gives a regular part
\(u^*(t)\) and identifies \(u(t)\) with \(u^*(t)\) in the exterior region
\(r\geq R(t)\). Proposition~\ref{prop:no-self-similar-finite} gives, for every
fixed \(\lambda\in(0,1)\),
\[
    \|\boldsymbol u(t)\|_{\mathcal E(\lambda R(t),R(t))}\to0
    \qquad\text{as }t\to T_+.
\]
Since \(u^*(t)\) is regular up to \(t=T_+\), its energy in balls of radius
\(o(1)\) tends to zero. Hence, by a diagonal choice of \(\lambda\downarrow0\),
there exists \(\rho(t)\ll R(t)\) such that
\[
    \|\boldsymbol u(t)-\boldsymbol u^*(t)\|_{\mathcal E(\rho(t),\infty)}\to0.
\]
The remaining assertion
\[
    \|\boldsymbol u^*(t)\|_{\mathcal E(0,\gamma R(t))}\to0,\qquad 0<\gamma<1,
\]
follows from the same regularity of \(u^*\). This proves
Proposition~\ref{Properties of the radiation}.
In the global case, Propositions
~\ref{global case radiation} and\ref{prop:no-self-similar-global} imply that, for every fixed
\(\gamma\in(0,1)\),
\[
    \limsup_{t\to\infty}
    \|\boldsymbol u(t)\|_{\mathcal E(\gamma t,\infty)}=0.
\]
Indeed, the part \(\gamma t<r<t-R\) is controlled by the self-similar
non-concentration, and the part \(r>t-R\) by the vanishing of the global
radiation. A diagonal choice of \(\gamma\downarrow0\) gives a function
\(\rho(t)\ll t\) such that
\[
    \|\boldsymbol u(t)\|_{\mathcal E(\rho(t),\infty)}\to0,
\]
which proves Proposition~\ref{Properties of the radiation global case}. 
We now prove Theorem \ref{Sequential soliton resolution}.
\begin{proof}
Let \(\rho(t)\)
denote the auxiliary scale appearing in Proposition~\ref{Properties of the radiation}
in the finite-time case, and in Proposition~\ref{Properties of the radiation global case} in the
global case. Thus \(\rho(t)\ll T_+-t\) in the finite-time case, and
\(\rho(t)\ll t\) in the global case.
We first consider the finite-time blow-up case. Let \(T=T_+\). From
Proposition~\ref{prop:avg-kinetic-finite} and the elementary selection argument used in \cite{JL},
there exists a sequence \(\tau_n\to T\) such that
\begin{equation}
    \lim_{n\to\infty}
    \sup_{0<\sigma<T-\tau_n}
    \frac1{\sigma}
    \int_{\tau_n}^{\tau_n+\sigma}
    \int_0^{T-t}
    |\partial_tu(t,r)|^2r^{D-1}\,drdt
    =0.
    \label{eq:finite-maximal-selection}
\end{equation}
Choose a sequence \(\ell_n>0\) such that
\[
    \sup_{t\in[\tau_n,\tau_n+\ell_n]}\rho(t)\ll \ell_n
    \qquad\text{and}\qquad
    \ell_n\ll T-\tau_n.
\]
Set $R_n:=\frac{T-\tau_n-\ell_n}{\ell_n}.$
After replacing \(R_n\) by a slower sequence, we may assume
\[
    R_n\to+\infty,
    \qquad
    \ell_nR_n\ll T-\tau_n.
\]
Define
\[
    u_n(s,r):=u(\tau_n+s,r),
    \qquad
    0\leq s\leq\ell_n.
\]
Then \eqref{eq:finite-maximal-selection} implies
\[
    \frac1{\ell_n}
    \int_0^{\ell_n}
    \int_0^{\ell_nR_n}
    |\partial_su_n(s,r)|^2r^{D-1}\,drds
    \to0.
\]
Therefore Lemma~\ref{lem:sequential-compactness}, Case I, applies. Hence, after
passing to a subsequence, there exist
\[
    s_n\in[0,\ell_n],
    \qquad
    1\ll r_n\leq R_n,
\]
such that $  \delta_{\ell_nr_n}(u_n(s_n))\to0.$
Set $t_n:=\tau_n+s_n$ and $ L_n:=\ell_nr_n.$
By construction,
\[
    \rho(t_n)\ll L_n\ll T-t_n.
\]
Returning to the original variables gives
\[
    \delta_{L_n}(u(t_n))\to0.
\]
Together with the finite-time radiation reduction in
Proposition~\ref{Properties of the radiation}, this yields the desired sequential
decomposition in the finite-time case.
We now treat the global case. By the dissipation identity, choose
\(\tau_n\to+\infty\) and \(\ell_n\to+\infty\) such that
\[
    \sup_{t\in[\tau_n,\tau_n+\ell_n]}\rho(t)\ll \ell_n,
    \qquad
    \ell_n\ll\tau_n,
\]
and
\[
    \int_{\tau_n}^{\tau_n+\ell_n}
    \|\partial_tu(t)\|_{L^2}^2\,dt
    \to0.
\]
Set $R_n:=\frac{\tau_n}{\ell_n}.$
Again replacing \(R_n\) by a slower sequence if necessary, we assume
\[
    R_n\to+\infty,
    \qquad
    \ell_nR_n\ll \tau_n.
\]
Define
\[
    u_n(s,r):=u(\tau_n+s,r),
    \qquad
    0\leq s\leq\ell_n.
\]
Then
\[
    \int_0^{\ell_n}
    \int_0^{\ell_nR_n}
    |\partial_su_n(s,r)|^2r^{D-1}\,drds
    \to0.
\]
Thus Lemma~\ref{lem:sequential-compactness}, Case II, applies and gives
\[
    s_n\in[0,\ell_n],
    \qquad
    1\ll r_n\leq R_n,
\]
with
\[
    \delta_{\ell_nr_n}(u(\tau_n+s_n))\to0.
\]
Setting $t_n:=\tau_n+s_n$ and $L_n:=\ell_nr_n,$
we have
\[
    \rho(t_n)\ll L_n\ll t_n.
\]
By the global radiation reduction in Proposition~\ref{Properties of the radiation global case},
there is no exterior radiation term. Hence the last display yields the desired
sequential decomposition in the global case.
\end{proof}

\section{From sequential to full soliton resolution}
In this section we pass from the sequential soliton resolution proved in the
previous section to the full-time convergence stated in Theorem~\ref{soliton resolution}. The argument is a
no-return argument. Indeed, the sequential result implies that the solution enters
arbitrarily small neighborhoods of the \(N\)-bubble manifold. If the full-time
convergence failed, then the solution would have to leave such a neighborhood and
return to it along a sequence of time intervals. These intervals are the collision
intervals.

The proof has two parts. First, using the collision-interval reduction of \cite{JL}, we isolate the bubbles which are actually involved in the
collision and introduce modulation coordinates for them. This part is geometric:
it uses the exterior radiation estimate, finite speed of propagation, and the
static multi-bubble modulation lemma, and is unaffected by the damping term. The
damping enters only in the second part, where we verify the localized virial
no-return estimate. In the finite-time case the damping term is absorbed by an
exponential weight, while in the global case it is controlled by the dissipation
and by the vanishing of the radiation.
\subsection{Collision intervals and exterior-interior decomposition}
We begin with the notation used to separate the exterior bubbles from the bubbles
which may be involved in a collision. Set $ R_+(t):=T_+-t$ when $T_+<\infty$ and $R_+(t):=t$ when $T_+=\infty.$
In the finite-time case, let \(\boldsymbol u^*(t)\) be the radiation term given
by Proposition~\ref{Properties of the radiation}; in the global case we set
\(\boldsymbol u^*(t)\equiv0\).
By Propositions~\ref{Properties of the radiation} and
\ref{Properties of the radiation global case}, there exists a function
\(\rho:I_*\to(0,\infty)\) such that
\begin{equation}
      \lim_{t\to T_+}
    \left[
        \left(\frac{\rho(t)}{R_+(t)}\right)^{\frac{D-2}{2}}
        +
        \|\boldsymbol u(t)-\boldsymbol{u}^*(t)\|_{\mathcal E(\rho(t),\infty)}
    \right]
    =0.
    \label{eq:radiation-scale-section5}
\end{equation}
Moreover, in the finite-time case, for every \(\gamma\in(0,1)\),
\[
    \|\boldsymbol u^*(t)\|_{\mathcal E(0,\gamma R_+(t))}\to0
    \qquad\text{as } t\to T_+.
    \label{eq:regular-part-small-section5}
\]
By Theorem~\ref{Sequential soliton resolution}, there exist an integer
\(N\ge0\), a sequence \(t_n\to T_+\), signs, and scales such that
\(\boldsymbol u(t_n)-\boldsymbol u^*(t_n)\) converges to an \(N\)-bubble
configuration. We fix this \(N\) throughout the rest of the proof.
The case \(N=0\) contains no collision and is treated by the standard no-bubble
argument; hence we assume \(N\ge1\). We use the convention
\[
    \lambda_{N+1}(t):=R_+(t).
\]
\begin{Def}[Exterior proximity]
Let \(K\in\{0,\ldots,N\}\), \(t\in I_*\), and \(\rho\ge0\), with \(\rho>0\) if
\(K\ge1\). We define
\[
    d_K(t;\rho)
    :=
    \inf_{\boldsymbol\iota,\boldsymbol\lambda}
    \left(
        \left\|
            \boldsymbol u(t)-\boldsymbol{u}^*(t)
            -
            \sum_{j=K+1}^{N}\iota_j\boldsymbol W_{\lambda_j}
        \right\|_{\mathcal E(\rho,\infty)}^2
        +
        \sum_{j=K}^{N}
        \left(\frac{\lambda_j}{\lambda_{j+1}}\right)^{\frac{D-2}{2}}
    \right)^{1/2}.
    \label{eq:exterior-proximity}
\]
Here
\[
    \boldsymbol\iota=(\iota_{K+1},\ldots,\iota_N)\in\{-1,1\}^{N-K},
    \qquad
    \boldsymbol\lambda=(\lambda_{K+1},\ldots,\lambda_N)\in(0,\infty)^{N-K},
\]
and we use the convention \(\lambda_K:=\rho\). If \(K=N\), the sum of bubbles is
empty and the infimum is void. For \(K=0\) we only use \(\rho=0\), and set
\[
    d(t):=d_0(t;0).
\]
\end{Def}
With this notation, the sequential soliton resolution gives
\begin{equation*}
      \liminf_{t\to T_+}d(t)=0.
\end{equation*}
The full soliton resolution is equivalent to
\begin{equation}
     \lim_{t\to T_+}d(t)=0.
    \label{eq:d-goes-zero}
\end{equation}
We argue by contradiction and assume that \eqref{eq:d-goes-zero} fails. The next
definition records the intervals on which the solution leaves a small neighborhood
of the full \(N\)-bubble manifold, while the exterior \(N-K\) bubbles remain
well-described.
\begin{Def}[Collision intervals]
Let \(K\in\{0,\ldots,N\}\) and \(0<\varepsilon<\eta\). A compact interval
\([a,b]\subset I_*\) is called a \(K\)-collision interval with parameters
\((\varepsilon,\eta)\) if
\[
    d(a)\le\varepsilon,\qquad d(b)\le\varepsilon,
\]
there exists \(c\in(a,b)\) such that 
\[
    d(c)\ge\eta,
\]
and there exists a function \(\rho_K:[a,b]\to(0,\infty)\) such that
\[
    d_K(t;\rho_K(t))\le\varepsilon
    \qquad\text{for all }t\in[a,b].
\]
In this case we write $[a,b]\in\mathcal C_K(\varepsilon,\eta).$ 
\end{Def}
The following proposition selects the number of bubbles which are genuinely
involved in the collision and separates the remaining exterior bubbles.
\begin{prop}[Collision reduction and exterior decomposition]
\label{prop:collision-reduction}
Assume that \eqref{eq:d-goes-zero} fails. Let \(K\) be the smallest
non-negative integer with the following property: there exist a number
\(\eta>0\), a sequence \(\varepsilon_n\to0\), and disjoint compact intervals
\[
    I_n=[a_n,b_n]\subset I_*,
    \qquad a_n,b_n\to T_+,
\]
such that
\begin{equation*}
    I_n\in\mathcal C_K(\varepsilon_n,\eta)
    \qquad\text{for all }n.
\end{equation*}
Then \(K\) is well-defined and \(K\in\{1,\ldots,N\}\).
Fix \(K\), \(\eta\), \(\varepsilon_n\), and \(I_n=[a_n,b_n]\) as above. After
passing to a subsequence, there exists a Lipschitz function 
\[
    \nu_n:I_n\to(0,\infty)
\]
such that
\begin{equation}
    \sup_{t\in I_n}
    \left(
        d_K(t;\nu_n(t))
        +
        \|\boldsymbol u(t)-\boldsymbol u^*(t)\|_{\mathcal E(\nu_n(t),2\nu_n(t))}
    \right)
    \to0,
    \label{eq:nu-separation}
\end{equation}
and
\begin{equation}
    \sup_{t\in I_n}|\nu_n'(t)|\to0.
    \label{eq:nu-slow}
\end{equation}
Furthermore, there exist signs
\[
    \boldsymbol\sigma=(\sigma_{K+1},\ldots,\sigma_N)
    \in\{-1,1\}^{N-K},
\]
scales
\[
    \boldsymbol\mu_n(t)
    =
    (\mu_{K+1,n}(t),\ldots,\mu_{N,n}(t))
    \in C^1(I_n;(0,\infty)^{N-K}),
\]
and an exterior error \(\boldsymbol h_n(t)\in\mathcal E\) such that, for
\(t\in I_n\),
\begin{equation}
    (1-\chi_{\nu_n(t)})(\boldsymbol u(t)-\boldsymbol u^*(t))
    =
    \sum_{j=K+1}^{N}\sigma_j\boldsymbol W_{\mu_{j,n}(t)}
    +
    \boldsymbol h_n(t),
    \label{eq:exterior-decomposition}
\end{equation}
where \(\chi_\nu(r):=\chi(r/\nu)\), and \(\chi\) is a fixed smooth cut-off equal
to \(1\) on \(r\le1\) and \(0\) on \(r\ge2\). With the convention
\[
    \mu_{N+1,n}(t):=R_+(t),
\]
we have
\begin{equation}
    \sup_{t\in I_n}
    \left(
        \|\boldsymbol h_n(t)\|_{\mathcal E}^2
        +
        \left(\frac{\nu_n(t)}{\mu_{K+1,n}(t)}\right)^{\frac{D-2}{2}}
        +
        \sum_{j=K+1}^{N}
        \left(\frac{\mu_{j,n}(t)}{\mu_{j+1,n}(t)}\right)^{\frac{D-2}{2}}
    \right)
    \to0.
    \label{eq:exterior-smallness}
\end{equation}
If \(K=N\), the exterior sum is empty and the convention
\(\mu_{N+1,n}(t)=R_+(t)\) is used in \eqref{eq:exterior-smallness}.
\end{prop}

\begin{proof}
Since \(\liminf_{t\to T_+}d(t)=0\) and \eqref{eq:d-goes-zero} fails, the
continuity of \(d(t)\) gives collision intervals with \(K=N\); hence the above
minimal integer is well-defined. The case \(K=0\) is excluded by the same
argument as in the wave equation: if \(K=0\), then the whole \(N\)-bubble
configuration remains controlled on the interval by the exterior distance, which
contradicts the existence of a point where \(d(t)\ge\eta\). This argument uses
only the exterior smallness \eqref{eq:radiation-scale-section5}, finite speed of
propagation, and the continuity of the distance functions. Thus \(K\in\{1,\ldots,N\}\).

For this minimal \(K\), the exterior-interior separation follows from the
standard annular selection argument. Namely, using \(d_K(t;\rho_K(t))\le
\varepsilon_n\) on \(I_n\), one chooses a separating scale \(\nu_n(t)\) between
the interior \(K\) bubbles and the exterior \(N-K\) bubbles so that the annular
energy on \((\nu_n(t),2\nu_n(t))\) is \(o_n(1)\); after the usual Lipschitz
regularization, \(\sup_{t\in I_n}|\nu_n'(t)|\to0\). This gives
\eqref{eq:nu-separation} and \eqref{eq:nu-slow}. The exterior decomposition
\eqref{eq:exterior-decomposition} and the smallness
\eqref{eq:exterior-smallness} then follow by applying the static multi-bubble
modulation lemma in the region \(r\ge \nu_n(t)\).
This reduction is purely geometric: it uses only finite speed of propagation,
the exterior estimate \eqref{eq:radiation-scale-section5}, annular pigeonholing,
and the static multi-bubble modulation lemma. Hence the damping term does not
enter at this stage.
\end{proof}

\subsection{Interior modulation and corrected parameters}
We now work on the collision intervals \(I_n=[a_n,b_n]\) given by
Proposition~\ref{prop:collision-reduction}. The exterior \(N-K\) bubbles have
been separated by the scale \(\nu_n(t)\), and the remaining analysis concerns the
interior \(K\) bubbles. Since \(\nu_n(t)\ll R_+(t)\) and \(\boldsymbol u^*(t)\)
is negligible in \(r\lesssim R_+(t)\) in the finite-time case, while
\(\boldsymbol u^*\equiv0\) in the global case, the interior modulation may be
written for the localized solution \(\chi_{\nu_n(t)}\boldsymbol u(t)\) itself.
Here and below \(\chi_\nu(r):=\chi(r/\nu)\).

We also fix the convention for the small errors produced by this localization.
After passing to a subsequence and increasing the errors if necessary, we denote
by \(\zeta_n\to0\) a sequence which controls, uniformly for \(t\in I_n\), the
exterior error in Proposition~\ref{prop:collision-reduction}, the annular energy
on \((\nu_n(t),2\nu_n(t))\), the terms involving \(\nu_n'(t)\), and, in the
finite-time case, the contribution of \(\boldsymbol u^*(t)\) in the interior
region. Thus estimates identical to the corresponding wave-equation estimates
will be used below with an additional error \(\zeta_n\), or
\(\zeta_n/\lambda_j(t)\) after projecting an equation at scale \(\lambda_j(t)\).

The following lemma gives the basic coordinates near the interior \(K\)-bubble
manifold. It is the static part of the modulation analysis, together with the
first-order estimates needed later.
\begin{lemma}[Basic interior modulation]
\label{lem:basic-interior-modulation}
Assume \(D\ge6\). There exist constants \(C_0>0\) and \(\eta_0>0\) such that,
after enlarging the error sequence \(\zeta_n \to 0\) fixed above if necessary, the following
holds. Let \(J\subset I_n\) be an open interval such that
$d(t)\le\eta_0$ for all $t\in J$.
Then there exist signs
\[
    \boldsymbol\iota=(\iota_1,\ldots,\iota_K)\in\{-1,1\}^K,
\]
independent of \(t\in J\), \(C^1\) modulation parameters
\[
    \boldsymbol\lambda(t)=(\lambda_1(t),\ldots,\lambda_K(t))
    \in C^1(J;(0,\infty)^K),
\]
and a remainder $\boldsymbol g(t)=(g(t),\dot g(t))\in\mathcal E$
such that, for all \(t\in J\),
\begin{equation}
    \chi_{\nu_n(t)}\boldsymbol u(t)
    =
    \sum_{j=1}^{K}\iota_j\boldsymbol W_{\lambda_j(t)}
    +
    \boldsymbol g(t),
    \qquad
    \langle Z_{\lambda_j(t)},g(t)\rangle=0
    \quad 1\le j\le K.
    \label{eq:basic-modulation-decomposition}
\end{equation}
Define the stable and unstable components by
\begin{equation*}
    a_j^\pm(t)
    :=
    \langle \boldsymbol\alpha_{\lambda_j(t)}^\pm,\boldsymbol g(t)\rangle,
    \qquad 1\le j\le K,
\end{equation*}
where \(\boldsymbol\alpha_\lambda^\pm\) are defined in \eqref{eq:alpha-pm}.
Then, for all \(t\in J\),
\begin{equation}
    C_0^{-1}d(t)-\zeta_n
    \le
    \|\boldsymbol g(t)\|_{\mathcal E}
    +
    \sum_{j=1}^{K-1}
    \left(\frac{\lambda_j(t)}{\lambda_{j+1}(t)}\right)^{\frac{D-2}{4}}
    \le
    C_0d(t)+\zeta_n.
    \label{eq:d-basic-equivalence}
\end{equation}
Moreover, if $ S:=\{j\in\{1,\ldots,K-1\}:\iota_j=\iota_{j+1}\}$,
then
\begin{equation}
    \|\boldsymbol g(t)\|_{\mathcal E}
    +
    \sum_{j\notin S}
    \left(\frac{\lambda_j(t)}{\lambda_{j+1}(t)}\right)^{\frac{D-2}{4}}
    \le
    C_0
    \max_{j\in S}
    \left(\frac{\lambda_j(t)}{\lambda_{j+1}(t)}\right)^{\frac{D-2}{4}}
    +
    C_0\max_{\substack{1\le i\le K\\ \pm}}|a_i^\pm(t)|
    +
    \zeta_n .
    \label{eq:coercive-basic-modulation}
\end{equation}
The scale parameters satisfy the rough derivative estimate
\begin{equation}
    |\lambda_j'(t)|
    \le
    C_0\|\dot g(t)\|_{L^2}
    +
    \zeta_n,
    \qquad 1\le j\le K.
    \label{eq:lambda-rough-derivative}
\end{equation}
Finally, for \(1\le j\le K\),
\begin{equation}
    \left|
        \frac{d}{dt}a_j^\pm(t)
        \mp
        \frac{\kappa}{\lambda_j(t)}a_j^\pm(t)
        \pm
        \frac{\alpha}{2}
        \langle Y_{\underline{\lambda}_j(t)},\dot g(t)\rangle
    \right|
    \le
    \frac{C_0}{\lambda_j(t)}d(t)^2
    +
    \frac{\zeta_n}{\lambda_j(t)}.
    \label{eq:aj-damped-dynamics}
\end{equation}
\end{lemma}
\begin{proof}
We only indicate the points where the damped equation enters. The existence of
the decomposition \eqref{eq:basic-modulation-decomposition}, the orthogonality
conditions, and the estimates \eqref{eq:d-basic-equivalence} and
\eqref{eq:coercive-basic-modulation} are consequences of the static modulation
lemma near a \(K\)-bubble configuration. This part uses only the elliptic
multi-bubble geometry and is identical to the wave case. The exterior bubbles and
the cut-off errors are absorbed into the uniform error \(\zeta_n\) by
Proposition~\ref{prop:collision-reduction}.
Differentiating the orthogonality conditions
\[
    \langle Z_{\lambda_j(t)},g(t)\rangle=0
\]
gives the usual modulation system for the parameters \(\lambda_j(t)\). Since the
matrix of this system is a small perturbation of a diagonal one, and since the
terms supported in the annulus \((\nu_n(t),2\nu_n(t))\) are \(o_n(1)\), we obtain
\eqref{eq:lambda-rough-derivative}. This argument is again the same as in the
undamped case.
It remains to record the evolution of the stable and unstable components. We
write
\[
    \boldsymbol W(t):=\sum_{i=1}^K\iota_i\boldsymbol W_{\lambda_i(t)}
\]
and set $ \boldsymbol w(t):=\chi_{\nu_n(t)}\boldsymbol u(t)
    =\boldsymbol W(t)+\boldsymbol g(t)$.
Since \(\boldsymbol u\) solves \(\partial_t\boldsymbol u=\widetilde JDE(\boldsymbol u)\),
we have the exact identity
\begin{equation*}
    \partial_t\boldsymbol w
    =
    \widetilde JDE(\boldsymbol w)
    +
    \boldsymbol\Phi_n(t),
    \label{eq:w-localized-equation}
\end{equation*}
where the localization error is
\begin{equation*}
    \boldsymbol\Phi_n(t)
    :=
    \chi_{\nu_n(t)}\widetilde JDE(\boldsymbol u(t))
    -
    \widetilde JDE(\chi_{\nu_n(t)}\boldsymbol u(t))
    -
    \frac{\nu_n'(t)}{\nu_n(t)}
    (r\partial_r\chi)_{\nu_n(t)}\boldsymbol u(t).
    \label{eq:localization-error}
\end{equation*}
Consequently,
\[
    \partial_t\boldsymbol g
    =
    \widetilde JDE(\boldsymbol W+\boldsymbol g)
    -
    \partial_t\boldsymbol W
    +
    \boldsymbol\Phi_n(t).
\]
We decompose the first term into its linearized part and the remaining interaction
terms:
\begin{equation}
    \partial_t\boldsymbol g
    =
    \widetilde J D^2E(\boldsymbol W(t))\boldsymbol g
    -
    \partial_t\boldsymbol W(t)
    +
    \boldsymbol{\mathcal R}(t),
    \label{eq:g-schematic-basic}
\end{equation}
where
\begin{equation*}
    \boldsymbol{\mathcal R}(t)
    :=
    \widetilde J\Big(
        DE(\boldsymbol W+\boldsymbol g)
        -
        D^2E(\boldsymbol W)\boldsymbol g
    \Big)
    +
    \boldsymbol\Phi_n(t).
\end{equation*}
Thus \(\boldsymbol{\mathcal R}\) contains the static interaction of the bubbles,
the terms at least quadratic in \(\boldsymbol g\), and the localization errors.
By the adjacent-bubble interaction estimates, the smallness of
\(\boldsymbol g\), and Proposition~\ref{prop:collision-reduction}, for all
\(1\le j\le K\),
\begin{equation}
    \left|
        \langle \boldsymbol\alpha_{\lambda_j(t)}^\pm,
        \boldsymbol{\mathcal R}(t)\rangle
    \right|
    \le
    \frac{C}{\lambda_j(t)}d(t)^2
    +
    \frac{\zeta_n}{\lambda_j(t)}.
    \label{eq:R-basic-bound}
\end{equation}
We now project \eqref{eq:g-schematic-basic} onto
\(\boldsymbol\alpha_{\lambda_j(t)}^\pm\). Differentiating
\(a_j^\pm(t)=\langle\boldsymbol\alpha_{\lambda_j(t)}^\pm,\boldsymbol g(t)\rangle\)
gives
\[
    \frac{d}{dt}a_j^\pm(t)
    =
    \langle\boldsymbol\alpha_{\lambda_j(t)}^\pm,
    \widetilde J D^2E(\boldsymbol W(t))\boldsymbol g(t)\rangle
    +\mathcal E_j^\pm(t),
\]
where \(\mathcal E_j^\pm\) contains the derivative of
\(\boldsymbol\alpha_{\lambda_j(t)}^\pm\), the term \(-\partial_t\boldsymbol W(t)\),
the difference between the full multi-bubble linearized operator and the
one-bubble operator at \(W_{\lambda_j(t)}\), and the remainder
\(\boldsymbol{\mathcal R}(t)\). By \eqref{eq:lambda-rough-derivative},
\eqref{eq:d-basic-equivalence}, scale separation, and
\eqref{eq:R-basic-bound},
\begin{equation}
    |\mathcal E_j^\pm(t)|
    \le
    \frac{C}{\lambda_j(t)}d(t)^2
    +
    \frac{\zeta_n}{\lambda_j(t)} .
    \label{eq:aj-error-bound}
\end{equation}
The principal term is exactly the one-bubble linearized dynamics. The wave part
gives the usual eigenvalue contribution, while the damping part contributes only
through the velocity component:
\begin{equation}
    \left\langle
        \boldsymbol\alpha_{\lambda_j(t)}^\pm,
        \widetilde J D^2E(\boldsymbol W_{\lambda_j(t)})\boldsymbol g(t)
    \right\rangle
    =
    \pm\frac{\kappa}{\lambda_j(t)}a_j^\pm(t)
    \mp
    \frac{\alpha}{2}
    \langle Y_{\underline{\lambda}_j(t)},\dot g(t)\rangle .
    \label{eq:damped-eigen-projection}
\end{equation}
Combining \eqref{eq:aj-error-bound} and
\eqref{eq:damped-eigen-projection} yields \eqref{eq:aj-damped-dynamics}.
\end{proof}

We next recall the localized virial correction developed in \cite{JL 2018, JL 2019}. It is designed to obtain a closed estimate for the derivative of the scale velocity. If one
uses the naive velocity
\[
    b_j^0(t)
    :=
    -
    \frac{\iota_j}{\|\Lambda W\|_{L^2}^2}
    \langle \Lambda W_{\lambda_j(t)},\dot g(t)\rangle ,
\]
then differentiating \(b_j^0\) gives the expected adjacent-bubble force, but also
a quadratic virial term in \(g\), of size \(d(t)^2/\lambda_j(t)\), coming from
the scaling direction. This term is not perturbative.
The correction is to add a localized scaling term
\[
    -
    \frac{1}{\|\Lambda W\|_{L^2}^2}
    \langle A(\lambda_j)g,\dot g\rangle .
\]
The point is twofold. First, \(A(\lambda)\) is uniformly bounded
\(\dot H^1\to L^2\), hence
\[
    |\langle A(\lambda_j)g,\dot g\rangle|
    \lesssim
    \|g\|_{\dot H^1}\|\dot g\|_{L^2}
    \lesssim d(t)^2,
\]
so the correction does not change the leading meaning of the scale velocity.
Second, \(A(\lambda)\) agrees with \(\lambda^{-1}\Lambda\) on the annulus
\(r\sim\lambda\). Therefore, when the correction is differentiated, it produces
the localized virial form which cancels the bad scaling contribution and is then
controlled by the localized coercivity estimates below.
To construct \(A(\lambda)\), fix \(c_0>0\). Choose \(c>0\) small and \(R>1\)
large, and let \(q=q_{c,R}\) be the function constructed in
\cite[Lemma 5.14]{JL}. Thus
\[
    q(r)=\frac12 r^2
    \qquad\text{for }r\in[R^{-1},R],
\]
while \(q\) is constant near \(0\) and near infinity, and satisfies the derivative
bounds and sign conditions stated there. For \(\lambda>0\), define
\begin{equation*}
    A(\lambda)g(r)
    :=
    q'\left(\frac r\lambda\right)\partial_rg(r)
    +
    \frac{D-2}{2D\lambda}
    \Delta q\left(\frac r\lambda\right)g(r),
\end{equation*}
and
\begin{equation*}
    \underline A(\lambda)g(r)
    :=
    q'\left(\frac r\lambda\right)\partial_rg(r)
    +
    \frac{1}{2\lambda}
    \Delta q\left(\frac r\lambda\right)g(r),
\end{equation*}
where \(\Delta=\partial_r^2+\frac{D-1}{r}\partial_r\). On the annulus
\(R^{-1}\lambda\le r\le R\lambda\), one has
\[
    A(\lambda)g
    =
    \frac1\lambda
    \left(r\partial_rg+\frac{D-2}{2}g\right),
    \qquad
    \underline A(\lambda)g
    =
    \frac1\lambda
    \left(r\partial_rg+\frac D2g\right).
\]
We shall use the following localized virial estimates from
\cite[Lemma 5.16]{JL}.
After choosing \(c>0\) sufficiently small and \(R>1\)
sufficiently large, for all \(g\) in the energy space,
\begin{equation}
    \langle \underline A(\lambda)g,-\Delta g\rangle
    \ge
    -\frac{c_0}{\lambda}\|g\|_{\mathcal E}^2
    +
    \frac1\lambda
    \int_{R^{-1}\lambda}^{R\lambda}
        |\partial_rg|^2r^{D-1}\,dr,
    \label{eq:A-coercivity-free}
\end{equation}
and, for every admissible \(Z\) with
\(\langle Z,\Lambda W\rangle>0\), \(\langle Z,Y\rangle=0\), if
\(\langle g,Z_\lambda\rangle=0\), then
\begin{equation}
    \frac1\lambda
    \int_{R^{-1}\lambda}^{R\lambda}
        |\partial_rg|^2r^{D-1}\,dr
    -
    \frac1\lambda
    \int_0^\infty
        \frac1D
        \Delta q\left(\frac r\lambda\right)
        f'(W_\lambda)g^2r^{D-1}\,dr
    \ge
    \frac{c_0}{\lambda}\|g\|_{\mathcal E}^2
    -
    \frac{C}{\lambda}a_\lambda^2 .
    \label{eq:A-coercivity-linearized}
\end{equation}
Here \(a_\lambda\) denotes the projection of \(g\) onto the negative mode at
scale \(\lambda\).
In the applications below this term is controlled by the stable and
unstable coefficients \(a_j^\pm\).
For \(1\le j\le K-1\), define the corrected scale
\begin{equation}
    \xi_j(t)
    :=
    \begin{cases}
        \lambda_j(t), & D\ge7,\\[0.4em]
        \displaystyle
        \lambda_j(t)
        -
        \frac{\iota_j}{\|\Lambda W\|_{L^2}^2}
        \left\langle
            \chi\left(\frac{\cdot}{L\lambda_j(t)}\right)
            \Lambda W_{\lambda_j(t)},g(t)
        \right\rangle,
        & D=6,
    \end{cases}
    \label{eq:xi-definition}
\end{equation}
where \(L\gg1\) will be chosen large. We also define the corrected velocity
\begin{equation*}
    \beta_j(t)
    :=
    -
    \frac{\iota_j}{\|\Lambda W\|_{L^2}^2}
    \langle \Lambda W_{\lambda_j(t)},\dot g(t)\rangle
    -
    \frac{1}{\|\Lambda W\|_{L^2}^2}
    \langle A(\lambda_j(t))g(t),\dot g(t)\rangle .
\end{equation*}
The first term is the usual scale velocity; the second one is the localized virial
correction. Its purpose is to absorb the virial error in the scale dynamics.
\begin{lemma}[Corrected modulation estimates]
\label{lem:corrected-modulation}
Let \(D\ge6\) and \(c_0>0\). There exist constants
\(\eta_0>0\), \(L_0>0\), \(c>0\), \(R>1\), and \(C_0>0\), with the following
property. Let \(J\subset I_n\) be an open interval on which
\[
    d(t)\le\eta_0
    \qquad\text{for all }t\in J.
\]
After choosing \(L\ge L_0\) in \eqref{eq:xi-definition}, the quantities
\(\xi_j\) and \(\beta_j\) satisfy, for all \(1\le j\le K-1\) and \(t\in J\),
\begin{equation}
    \left|\frac{\xi_j(t)}{\lambda_j(t)}-1\right|
    \le c_0,
    \label{eq:xi-close-lambda}
\end{equation}
and
\begin{equation}
    |\xi_j'(t)-\beta_j(t)|
    \le c_0 d(t)+\zeta_n.
    \label{eq:xi-beta-close}
\end{equation}
Moreover,
\begin{align}
    \beta_j'(t)+\alpha\beta_j(t)
    \ge\;&
    (\iota_j\iota_{j+1}\omega^2-c_0)
    \frac1{\lambda_j(t)}
    \left(\frac{\lambda_j(t)}{\lambda_{j+1}(t)}\right)^{\frac{D-2}{2}}
    \notag\\
    &+
    (-\iota_j\iota_{j-1}\omega^2-c_0)
    \frac1{\lambda_j(t)}
    \left(\frac{\lambda_{j-1}(t)}{\lambda_j(t)}\right)^{\frac{D-2}{2}}
    \notag\\
    &-
    \frac{c_0}{\lambda_j(t)}d(t)^2
    -
    \frac{C_0}{\lambda_j(t)}
    \left((a_j^+(t))^2+(a_j^-(t))^2\right)
    -
    \frac{\zeta_n}{\lambda_j(t)},
    \label{eq:beta-damped-ineq}
\end{align}
where, by convention, \(\lambda_0(t)=0\), \(\lambda_{K+1}(t)=+\infty\), and
\[
    \omega^2
    :=
    \frac{D-2}{2D}
    (D(D-2))^{\frac D2}
    \|\Lambda W\|_{L^2}^{-2}>0.
    \label{eq:omega-definition}
\]
\end{lemma}
\begin{proof}
The estimates \eqref{eq:xi-close-lambda} and \eqref{eq:xi-beta-close} are the
same corrected-scale estimates as in \cite[Lemma 5.19]{JL}. They use only the
definition of \(\xi_j\), the orthogonality conditions in
\eqref{eq:basic-modulation-decomposition}, the rough estimate
\eqref{eq:lambda-rough-derivative}, and the localized boundedness of \(A(\lambda)\).
The errors caused by the exterior cutoff are \(o_n(1)\) by
Proposition~\ref{prop:collision-reduction}, and are absorbed into \(\zeta_n\).

It remains to prove the differential inequality for \(\beta_j\). We write
\[
    A_j:=A(\lambda_j(t)),
    \qquad
    \underline A_j:=\underline A(\lambda_j(t)),
    \qquad
    W_j:=W_{\lambda_j(t)} .
\]
Differentiating the definition of \(\beta_j\), using the equation for the second
component of \(\boldsymbol g\), and separating the terms which contain the
damping, one obtains
\begin{equation}
    \|\Lambda W\|_{L^2}^2\beta_j'
    =
    I_j+Q_j+\mathcal E_j
    +
    \alpha\iota_j\langle \Lambda W_j,\dot g\rangle
    +
    \alpha\langle A_jg,\dot g\rangle .
    \label{eq:beta-prime-decomposition}
\end{equation}
Here \(I_j\) is the contribution of the static interaction of the bubbles,
\[
    I_j
    :=
    -\frac{\iota_j}{\lambda_j}
    \left\langle
        \Lambda W_j,\,
        f\left(\sum_{i=1}^K\iota_i W_{\lambda_i}\right)
        -
        \sum_{i=1}^K\iota_i f(W_{\lambda_i})
    \right\rangle ,
\]
and \(Q_j\) is the localized virial quadratic form
\begin{equation*}
    Q_j
    :=
    \langle \underline A_jg,-\Delta g\rangle
    -
    \frac1{\lambda_j}
    \int_0^\infty
        \frac1D
        \Delta q\left(\frac r{\lambda_j}\right)
        f'(W_j)g^2 r^{D-1}\,dr .
\end{equation*}
The remainder \(\mathcal E_j\) contains the nonlinear terms at least cubic in
\(g\), the scale-separated interaction errors, the terms involving
\(\lambda_i'\), and the localization errors. By the estimates used in the
undamped case, together with \eqref{eq:lambda-rough-derivative} and the definition
of \(\zeta_n\),
\begin{equation}
    |\mathcal E_j|
    \le
    \|\Lambda W\|_{L^2}^2
    \left(
        \frac{c_0}{\lambda_j}d(t)^2
        +
        \frac{\zeta_n}{\lambda_j}
    \right),
    \label{eq:beta-remainder-bound}
\end{equation}
provided \(\eta_0\) is chosen sufficiently small and \(n\) sufficiently large.
This is precisely the part of the computation which is unchanged from
\cite[Lemma 5.19]{JL}; the exterior cut-off errors are absorbed into
\(\zeta_n\).
The damping contribution in \eqref{eq:beta-prime-decomposition} is explicit. By
the definition of \(\beta_j\),
\[
    \alpha\iota_j\langle \Lambda W_j,\dot g\rangle
    +
    \alpha\langle A_jg,\dot g\rangle
    =
    -\alpha\|\Lambda W\|_{L^2}^2\beta_j .
\]
Hence
\begin{equation}
    \|\Lambda W\|_{L^2}^2(\beta_j'+\alpha\beta_j)
    =
    I_j+Q_j+\mathcal E_j .
    \label{eq:beta-alpha-main}
\end{equation}
We now estimate the two principal terms. The standard adjacent-bubble computation
gives
\begin{align}
    \frac{I_j}{\|\Lambda W\|_{L^2}^2}
    \ge\;&
    (\iota_j\iota_{j+1}\omega^2-c_0)
    \frac1{\lambda_j}
    \left(\frac{\lambda_j}{\lambda_{j+1}}\right)^{\frac{D-2}{2}}
    \notag\\
    &+
    (-\iota_j\iota_{j-1}\omega^2-c_0)
    \frac1{\lambda_j}
    \left(\frac{\lambda_{j-1}}{\lambda_j}\right)^{\frac{D-2}{2}}
    -
    \frac{c_0}{\lambda_j}d(t)^2
    -
    \frac{\zeta_n}{\lambda_j}.
    \label{eq:Ij-estimate}
\end{align}
Here the convention is \(\lambda_0=0\), \(\lambda_{K+1}=+\infty\). The error \(c_0d(t)^2/\lambda_j\)
comes from the non-adjacent interactions and from taking \(\eta_0\) small.
For \(Q_j\), we use the localized virial coercivity estimates
\eqref{eq:A-coercivity-free},\eqref{eq:A-coercivity-linearized}. Since
\(\langle Z_{\lambda_j},g\rangle=0\), the negative direction is controlled by the
stable and unstable coefficients, and we obtain
\begin{equation}
    \frac{Q_j}{\|\Lambda W\|_{L^2}^2}
    \ge
    -
    \frac{c_0}{\lambda_j}d(t)^2
    -
    \frac{C_0}{\lambda_j}
    \left((a_j^+(t))^2+(a_j^-(t))^2\right).
    \label{eq:Qj-estimate}
\end{equation}
Combining \eqref{eq:beta-alpha-main}, \eqref{eq:beta-remainder-bound},
\eqref{eq:Ij-estimate}, and \eqref{eq:Qj-estimate} yields
\eqref{eq:beta-damped-ineq}.
\end{proof}
As a consequence of the modulation estimates above, we also record the following
localized virial bound, which will be used in the no-return argument.
\begin{coro}[Localized virial control]
\label{cor:localized-virial-control}
There exist constants \(C_0>0\), \(\eta_0>0\), and a sequence
\(\delta_n\downarrow0\), with $\frac{\zeta_n}{\delta_n}\to0,$
such that the following holds. Let \(J\subset I_n\) be an open interval on which
\[
    \delta_n\le d(t)\le\eta_0
    \qquad\text{for all }t\in J.
\]
Let \(\rho:J\to(0,\infty)\) be a \(C^1\) function satisfying
\[
    \rho(t)\le \nu_n(t),
    \qquad
    |\rho'(t)|\le1 .
\]
Then, for all \(t\in J\),
\begin{equation}
    \left|
        \Omega_{1,\rho(t)}(\boldsymbol u(t))
        +
        \frac{D-2}{2}\Omega_{2,\rho(t)}(\boldsymbol u(t))
    \right|
    \le
    C_0 d(t).
    \label{eq:localized-virial-control}
\end{equation}
\end{coro}
\begin{proof}
Since \(\rho(t)\le\nu_n(t)\), the quantities
\(\Omega_{1,\rho(t)}\) and \(\Omega_{2,\rho(t)}\) only see the interior region.
Using the decomposition
\[
    \chi_{\nu_n(t)}\boldsymbol u(t)
    =
    \sum_{j=1}^K\iota_j\boldsymbol W_{\lambda_j(t)}
    +
    \boldsymbol g(t),
\]
the same computation as in the wave equation gives
\[
    \left|
        \Omega_{1,\rho(t)}(\boldsymbol u(t))
        +
        \frac{D-2}{2}\Omega_{2,\rho(t)}(\boldsymbol u(t))
    \right|
    \lesssim
    \|\boldsymbol g(t)\|_{\mathcal E}
    +
    \sum_{j=1}^{K-1}
    \left(\frac{\lambda_j(t)}{\lambda_{j+1}(t)}\right)^{\frac{D-2}{4}}
    +
    \zeta_n .
\]
Here the pure multi-bubble contribution cancels by the scaling identity, while
the terms produced by the exterior cut-off, the annular region, and the radiation
are absorbed into \(\zeta_n\). Applying \eqref{eq:d-basic-equivalence} and using
\(\zeta_n\le o(1)\delta_n\le o(1)d(t)\), after increasing \(C_0\) if necessary,
yields \eqref{eq:localized-virial-control}. The same argument, without using the lower bound \(d(t)\ge\delta_n\), gives the
weaker estimate
\[
\left|
    \Omega_{1,\rho(t)}(u(t))
    +\frac{D-2}{2}\Omega_{2,\rho(t)}(u(t))
\right|
\le C_0 d(t)+\zeta_n
\]
whenever \(d(t)\le\eta_0\), \(\rho(t)\le\nu_n(t)\), and \(|\rho'(t)|\le1\).
\end{proof}

\subsection{Scale control and interval decomposition}
We first introduce the auxiliary scale which measures the size of the interior
\(K\)-bubble cluster. This scale will be used later to construct moving cut-offs
between the interior bubbles and the exterior region.
Fix \(\kappa_1>0\) sufficiently small. For \(t\in I_n\), define
\[
    \mu_n(t)
    :=
    \sup\left\{
        r\le \nu_n(t):
        \|\boldsymbol u(t)\|_{\mathcal E(r,\nu_n(t))}=\kappa_1
    \right\}.
\]
For \(n\) large, this number is well-defined and satisfies
\(\mu_n(t)<\nu_n(t)\). We then define its Lipschitz regularization by
\begin{equation}
    \mu_{*,n}(t)
    :=
    \inf_{s\in I_n}\bigl(4\mu_n(s)+|s-t|\bigr),
    \qquad t\in I_n.
    \label{eq:mu-star-definition}
\end{equation}
When no confusion is possible we write simply \(\mu_*(t)\).
\begin{lemma}[Auxiliary scale]
\label{lem:auxiliary-scale}
There exist constants \(\eta_0>0\), \(C_0>0\), and \(\kappa_2>0\) such that,
after taking \(n\) sufficiently large, the following properties hold.
\begin{enumerate}
    \item The function \(\mu_*\) is \(1\)-Lipschitz on \(I_n\), and
    \[
        \mu_*(t)\le 4\mu_n(t)
        \qquad\text{for all }t\in I_n.
    \]
    \item If \(t\in I_n\) and \(d(t)\le\eta_0\), then
    \begin{equation}
        \kappa_2\lambda_K(t)
        \le
        \mu_*(t)
        \le
        \kappa_2^{-1}\lambda_K(t).
        \label{eq:mu-star-lambdaK}
    \end{equation}
    \item Let \(t_n\in I_n\). Suppose that \(d(t_n)\le\eta_0\), and that there
    exists a sequence \(R_n\to\infty\) such that
    \[
        R_n\mu_*(t_n)\le \nu_n(t_n),
        \qquad
        \frac{R_n\mu_*(t_n)}{\nu_n(t_n)}\to0,
    \]
    and $   \|\boldsymbol u(t_n)\|_{\mathcal E(R_n\mu_*(t_n),\,\nu_n(t_n))}
        \to0 .$
    Then \(d(t_n)\to0\).
\end{enumerate}
\end{lemma}
\begin{proof}
The construction is the same as in the wave equation. The first property follows
directly from the definition \eqref{eq:mu-star-definition}. If \(d(t)\le\eta_0\),
the interior decomposition from Lemma~\ref{lem:basic-interior-modulation} shows
that the outermost interior bubble is located at scale \(\lambda_K(t)\), while
the exterior region starts beyond \(\nu_n(t)\). Choosing \(\kappa_1\) small and
\(\eta_0\) small gives \eqref{eq:mu-star-lambdaK}. The last assertion is the
standard finite-speed consequence: if there is no energy in an annulus separating
the scale \(\mu_*(t_n)\) from the exterior scale \(\nu_n(t_n)\), then the
interior cluster is already separated from the exterior region, and the static
modulation lemma implies \(d(t_n)\to0\). The damping term does not affect this
argument.
\end{proof}
The next consequence gives the lower bound on the length of a genuine excursion
away from the multi-bubble manifold.
\begin{lemma}[Length of excursions]
\label{lem:excursion-length}
For every \(0<\varepsilon<\eta<\eta_0\) there exists \(C_\eta>0\) such that the
following holds for all sufficiently large \(n\). Let
\([c,d]\subset I_n\) satisfy $d(c)\le\varepsilon,$ $d(d)\le\varepsilon,$
and suppose that there exists \(t_0\in[c,d]\) with $d(t_0)\ge\eta .$
Then
\begin{equation*}
    d-c
    \ge
    C_\eta^{-1}\max\bigl(\mu_*(c),\mu_*(d)\bigr).
\end{equation*}
\end{lemma}
\begin{proof}
This is again the finite-speed argument of \cite{JL}. If
\(d-c\ll\max(\mu_*(c),\mu_*(d))\), then the energy distribution on the relevant
annuli cannot change enough between the endpoints and the point \(t_0\). Using
Lemma~\ref{lem:auxiliary-scale}, one obtains an annular region separating the
interior cluster from the exterior scale with vanishing energy, which forces
\(d(t_0)\to0\), contradicting \(d(t_0)\ge\eta\). The proof uses only finite speed
of propagation and the exterior-interior decomposition, and is unchanged by the
damping term.
\end{proof}

The next proposition is the point where the corrected modulation estimates are
used. It controls the solution on intervals on which the distance to the
multi-bubble manifold stays small.
\begin{prop}[Control on modulation intervals]
\label{prop:modulation-interval-control}
There exist constants \(\eta_0>0\), \(C_0>0\), and a sequence
\(\delta_n\downarrow0\), with $\frac{\zeta_n}{\delta_n^2}\to0,$
such that the following holds. Let \([t_1,t_2]\subset I_n\) be an interval on
which
\[
    \delta_n\le d(t)\le\eta_0
    \qquad\text{for all }t\in[t_1,t_2].
\]
Then, for \(n\) sufficiently large,
\begin{equation}
    \sup_{t\in[t_1,t_2]}\lambda_K(t)
    \le
    \frac43
    \inf_{t\in[t_1,t_2]}\lambda_K(t),
    \label{eq:lambdaK-control}
\end{equation}
and
\begin{equation}
    \int_{t_1}^{t_2}d(t)\,dt
    \le
    C_0
    \left(
        d(t_1)^{\frac4{D-2}}\lambda_K(t_1)
        +
        d(t_2)^{\frac4{D-2}}\lambda_K(t_2)
    \right).
    \label{eq:modulation-interval-integral}
\end{equation}
\end{prop}
\begin{proof}
We first consider the finite-time case. Set
\[
    \widetilde\beta_j(t)
    :=
    e^{\alpha(t-T_+)}\beta_j(t).
\]
Then
\[
    \widetilde\beta_j'(t)
    =
    e^{\alpha(t-T_+)}
    \bigl(\beta_j'(t)+\alpha\beta_j(t)\bigr).
\]
Since \(t\to T_+\) on the collision intervals, the factor
\(e^{\alpha(t-T_+)}\) is uniformly comparable to \(1\). Hence
Lemma~\ref{lem:corrected-modulation} gives the same differential inequality for
\(\widetilde\beta_j\) as in the wave equation, up to errors bounded by
\(\zeta_n/\lambda_j(t)\). Because \(d(t)\ge\delta_n\) and
\(\zeta_n/\delta_n^2\to0\), these errors are absorbed into the
\(c_0d(t)^2/\lambda_j(t)\) term.
We now follow the finite-dimensional argument of \cite{JL}. Let $S:=\{j\in\{1,\ldots,K-1\}:\iota_j=\iota_{j+1}\}.$
For \(C_1>0\) sufficiently large, define
\begin{equation*}
    \Phi(t)
    :=
    \sum_{j\in S}2^{-j}\xi_j(t)\widetilde\beta_j(t)
    -
    C_1\sum_{j=1}^{K}\lambda_j(t)(a_j^-(t))^2
    +
    C_1\sum_{j=1}^{K}\lambda_j(t)(a_j^+(t))^2 .
\end{equation*}
We claim that
\begin{equation}
    \Phi'(t)\ge c\,d(t)^2
    \qquad\text{for all }t\in[t_1,t_2],
    \label{eq:Phi-monotone}
\end{equation}
where \(c>0\) depends only on \(D\) and \(N\).
Indeed, differentiating \(\Phi\), using
\(\xi_j'=\beta_j+O(c_0d+\zeta_n)\), and recalling that
\(e^{\alpha(t-T_+)}\simeq1\) on \([t_1,t_2]\), gives
\[
\begin{aligned}
    \Phi'(t)
    \ge\;&
    c\sum_{j\in S}(\beta_j(t))^2
    +
    \sum_{j\in S}2^{-j}\lambda_j(t)\widetilde\beta_j'(t)  \\
    &+
    C_1\kappa\sum_{j=1}^{K}\big((a_j^-(t))^2+(a_j^+(t))^2\big)
    -
    \mathcal E_{\rm damp}(t)
    -
    c_0d(t)^2
    -
    \zeta_n .
\end{aligned}
\]
Here the terms containing \(\lambda_j'(a_j^\pm)^2\) are absorbed into
\(c_0d(t)^2\), using \eqref{eq:lambda-rough-derivative} and the smallness of
\(\eta_0\). The only new term compared with the undamped wave equation is
\[
    \mathcal E_{\rm damp}(t)
    :=
    C_1\alpha
    \sum_{j=1}^K
    \lambda_j(t)
    |\langle Y_{\underline{\lambda}_j(t)},\dot g(t)\rangle|
    \bigl(|a_j^-(t)|+|a_j^+(t)|\bigr).
\]
Since we are in the finite-time case and \(t_1\) is sufficiently close to
\(T_+\), the scales satisfy \(\lambda_j(t)\lesssim T_+-t_1\) on
\([t_1,t_2]\). Hence, by Cauchy--Schwarz and
\(\|\dot g(t)\|_{L^2}\lesssim d(t)+\zeta_n\),
\[
    \mathcal E_{\rm damp}(t)
    \le
    \frac12 C_1\kappa
    \sum_{j=1}^{K}\big((a_j^-(t))^2+(a_j^+(t))^2\big)
    +
    c_0 d(t)^2
    +
    \zeta_n ,
\]
after taking \(t_1\) closer to \(T_+\) and then \(n\) large.
Next, by \eqref{eq:beta-damped-ineq},
\[
\begin{aligned}
    \sum_{j\in S}2^{-j}\lambda_j\widetilde\beta_j'
    \ge\;&
    \omega^2 e^{\alpha(t-T_+)}
    \sum_{j\in S}2^{-j}
    \left(
        \iota_j\iota_{j+1}
        \left(\frac{\lambda_j}{\lambda_{j+1}}\right)^{\frac{D-2}{2}}
        -
        \iota_j\iota_{j-1}
        \left(\frac{\lambda_{j-1}}{\lambda_j}\right)^{\frac{D-2}{2}}
    \right)  \\
    &-
    C\sum_{j=1}^K\big((a_j^+)^2+(a_j^-)^2\big)
    -
    c_0d(t)^2
    -
    \zeta_n .
\end{aligned}
\]
The standard weighted-sum rearrangement gives
\[
    \sum_{j\in S}2^{-j}
    \left(
        \left(\frac{\lambda_j}{\lambda_{j+1}}\right)^{\frac{D-2}{2}}
        -
        \iota_j\iota_{j-1}
        \left(\frac{\lambda_{j-1}}{\lambda_j}\right)^{\frac{D-2}{2}}
    \right)
    \ge
    c
    \sum_{j\in S}
    \left(\frac{\lambda_j}{\lambda_{j+1}}\right)^{\frac{D-2}{2}} .
\]
Combining this with \eqref{eq:coercive-basic-modulation} and taking \(C_1\)
large, \(c_0\) small, and \(n\) large, yields \eqref{eq:Phi-monotone}.
We now turn to the size of \(\Phi\) at the endpoints. From
\eqref{eq:xi-close-lambda}, \eqref{eq:xi-beta-close}, the definition of
\(\beta_j\), and the boundedness of \(A(\lambda_j)\), we have
\[
    |\xi_j(t)|\lesssim \lambda_j(t),
    \qquad
    |\widetilde\beta_j(t)|\lesssim d(t),
    \qquad
    |a_j^\pm(t)|\lesssim d(t),
\]
for \(n\) large. Therefore, using
\(\lambda_j/\lambda_K\le \lambda_j/\lambda_{j+1}\) for \(j<K\), we obtain
\begin{equation}
    \frac{|\Phi(t)|}{\lambda_K(t)}
    \lesssim
    \sum_{j\in S}
    \frac{\lambda_j(t)}{\lambda_K(t)}|\widetilde\beta_j(t)|
    +
    \sum_{j=1}^{K}
    \frac{\lambda_j(t)}{\lambda_K(t)}
    \big((a_j^-(t))^2+(a_j^+(t))^2\big)
    \lesssim
    d(t)^{\frac{D+2}{D-2}} .
    \label{eq:Phi-upper}
\end{equation}
Indeed, the first term is bounded by
\[
    d(t)\sum_{j\in S}\frac{\lambda_j(t)}{\lambda_{j+1}(t)}
    \lesssim
    d(t)^{1+\frac4{D-2}},
\]
while the stable/unstable contribution is \(O(d(t)^2)\), which is bounded by the
right-hand side since \(D\ge6\) and \(d(t)\le\eta_0\).
Combining \eqref{eq:Phi-monotone} and \eqref{eq:Phi-upper} in the standard way
yields \eqref{eq:modulation-interval-integral}. Once
\eqref{eq:modulation-interval-integral} is known, the rough bound
\[
    |\lambda_K'(t)|\lesssim d(t)
\]
from Lemma~\ref{lem:basic-interior-modulation}, together with the smallness of
\(\eta_0\), gives \eqref{eq:lambdaK-control}. This proves the finite-time case.

We turn to the global case. If the lengths \(t_2-t_1\) are uniformly bounded,
the same proof applies with the local integrating factor
\[
    \widetilde\beta_j(t):=e^{\alpha(t-t_1)}\beta_j(t),
\]
since \(e^{\alpha(t-t_1)}\) is then uniformly bounded above and below. It remains
to consider the case where the lengths are not uniformly bounded.
Set
\[
    \gamma_n:=
    \int_{\inf I_n}^{\infty}\|\partial_tu(t)\|_{L^2}^2\,dt .
\]
Since \(I_n\to\infty\) in the global case, \(\gamma_n\to0\). We choose
\(\delta_n\downarrow0\) so slowly that, in addition to
\(\zeta_n/\delta_n^2\to0\),
\[
    \frac{\gamma_n^{1/2}}{\delta_n^2}\to0 .
\]
Suppose, toward a contradiction, that either \eqref{eq:lambdaK-control} or
\eqref{eq:modulation-interval-integral} fails for a sequence of intervals
\([t_{1,n},t_{2,n}]\subset I_n\) such that
\[
    t_{2,n}-t_{1,n}\to\infty,
    \qquad
    \delta_n\le d(t)\le\eta_0
    \quad\text{for all }t\in[t_{1,n},t_{2,n}].
\]
By the definition of \(\gamma_n\), and by the localization errors already
absorbed in \(\zeta_n\), we have
\[
    \int_{t_{1,n}}^{t_{2,n}}\|\dot g(t)\|_{L^2}^2\,dt
    \lesssim
    \gamma_n+\zeta_n .
\]
Hence, by the Cauchy--Schwarz inequality,
\[
\begin{aligned}
    \int_{t_{1,n}}^{t_{2,n}}\|\dot g(t)\|_{L^2}\,dt
    &\lesssim
    (t_{2,n}-t_{1,n})^{1/2}\gamma_n^{1/2}
    +
    \zeta_n(t_{2,n}-t_{1,n}) .
\end{aligned}
\]
On the other hand,
\[
    \int_{t_{1,n}}^{t_{2,n}}d(t)^2\,dt
    \ge
    \delta_n^2(t_{2,n}-t_{1,n}).
\]
By the choice of \(\delta_n\), it follows that
\begin{equation}
    \int_{t_{1,n}}^{t_{2,n}}\|\dot g(t)\|_{L^2}\,dt
    =
    o_n(1)
    \int_{t_{1,n}}^{t_{2,n}}d(t)^2\,dt .
    \label{eq:global-velocity-error-small}
\end{equation}
We use the unweighted functional
\[
    \Phi(t)
    :=
    \sum_{j\in S}2^{-j}\xi_j(t)\beta_j(t)
    -
    C_1\sum_{j=1}^{K}\lambda_j(t)(a_j^-(t))^2
    +
    C_1\sum_{j=1}^{K}\lambda_j(t)(a_j^+(t))^2 .
\]
Differentiating \(\Phi\) gives the same expression as in the finite-time case,
except that the damping is no longer absorbed by an exponential factor. More
precisely, using \eqref{eq:beta-damped-ineq} for
\(\beta_j'+\alpha\beta_j\) and then moving the additional
\(-\alpha\beta_j\)-contribution to the error side, we get
\[
\begin{aligned}
    \Phi'(t)\ge\;&
    c\sum_{j\in S}(\beta_j(t))^2
    +
    c\sum_{j\in S}
    \left(\frac{\lambda_j(t)}{\lambda_{j+1}(t)}\right)^{\frac{D-2}{2}}
    +
    c\sum_{j=1}^{K}\big((a_j^-(t))^2+(a_j^+(t))^2\big)    \\
    &-
    c_0d(t)^2
    -
    \zeta_n
    -
    \mathcal D_{\rm glob}(t),
\end{aligned}
\]
where the only new global error is
\[
    \mathcal D_{\rm glob}(t)
    :=
    C\sum_{j\in S}\lambda_j(t)|\beta_j(t)|
    +
    C\sum_{j=1}^{K}
    \lambda_j(t)
    |\langle Y_{\underline{\lambda_j}(t)},\dot g(t)\rangle|
    \bigl(|a_j^-(t)|+|a_j^+(t)|\bigr).
\]
The scales are uniformly bounded in the global case. Moreover, by the definition
of \(\beta_j\) and the boundedness of \(A(\lambda_j)\colon\dot H^1\to L^2\),
\[
    |\beta_j(t)|
    \lesssim
    \|\dot g(t)\|_{L^2},
\]
provided \(\eta_0\) is small. Also
\[
    |\langle Y_{\underline{\lambda}_j(t)},\dot g(t)\rangle|
    \lesssim
    \|\dot g(t)\|_{L^2},
    \qquad
    |a_j^\pm(t)|\lesssim d(t).
\]
Therefore
\[
    \mathcal D_{\rm glob}(t)
    \lesssim
    \|\dot g(t)\|_{L^2}
    +
    \|\dot g(t)\|_{L^2}d(t).
\]
Using \eqref{eq:global-velocity-error-small} and Cauchy--Schwarz, we obtain
\[
    \int_{t_{1,n}}^{t_{2,n}}\mathcal D_{\rm glob}(t)\,dt
    \le
    o_n(1)
    \int_{t_{1,n}}^{t_{2,n}}d(t)^2\,dt .
\]
Hence, after taking \(c_0\) small and \(n\) large, and using
\eqref{eq:coercive-basic-modulation}, we arrive at the integrated monotonicity
estimate
\begin{equation}
    \int_{t_{1,n}}^{t_{2,n}}\Phi'(t)\,dt
    \ge
    c\int_{t_{1,n}}^{t_{2,n}}d(t)^2\,dt .
    \label{eq:Phi-global-integrated}
\end{equation}
The endpoint bound \eqref{eq:Phi-upper} holds for this unweighted \(\Phi\) as
well. Combining it with \eqref{eq:Phi-global-integrated} gives
\eqref{eq:modulation-interval-integral}. Finally, \eqref{eq:lambdaK-control}
follows from \eqref{eq:modulation-interval-integral} and
\[
    |\lambda_K'(t)|\lesssim d(t)+\zeta_n,
\]
as in the finite-time case.
\end{proof}
We now record the interval decomposition which will be used in the localized
virial argument. This is the same decomposition as in \cite{JL}: the
collision interval is divided into small-modulation pieces, transition pieces,
and compactness pieces. The only input from the present equation is
Proposition~\ref{prop:modulation-interval-control}, which gives the estimate on
the small-modulation pieces.
\begin{prop}[Interval decomposition]
\label{prop:interval-decomposition}
There exist constants \(\theta_0>0\), \(\varepsilon_*>0\), and \(C_0>0\) with the
following property. Let \(\theta_n\downarrow0\) be any sequence such that
\[
    \max\{\varepsilon_n,\delta_n\}\le \theta_n\le \theta_0 ,
\]
where \(\varepsilon_n\) is the endpoint parameter of the collision interval
\(I_n=[a_n,b_n]\). Then, after passing to a subsequence, for all sufficiently
large \(n\) there exist an integer \(N_n\ge1\) and a partition
\[
\begin{aligned}
    a_n
    &=
    e_{0,n}^{L}
    <
    e_{0,n}^{R}
    \le
    c_{0,n}^{R}
    \le
    d_{0,n}^{R}
    \le
    f_{0,n}^{R}
    \le
    f_{1,n}^{L}
    \le
    d_{1,n}^{L}
    \le
    c_{1,n}^{L}
    \le
    e_{1,n}^{L}
    <
    e_{1,n}^{R}
    \le \cdots                                      \\
    &\qquad\qquad\qquad\qquad
    \cdots
    \le
    c_{N_n,n}^{L}
    \le
    e_{N_n,n}^{L}
    <
    e_{N_n,n}^{R}
    =
    b_n ,
\end{aligned}
\]
with the following properties.
\begin{enumerate}
    \item For every \(m=0,\ldots,N_n\), $ d(t)\le\eta_0$ for all $t\in[e_{m,n}^{L},e_{m,n}^{R}],$
    and
    \begin{equation}
        \int_{e_{m,n}^{L}}^{e_{m,n}^{R}} d(t)\,dt
        \le
        C_0\theta_n^{\frac4{D-2}}
        \min\bigl(\mu_*(e_{m,n}^{L}),\mu_*(e_{m,n}^{R})\bigr).
        \label{eq:small-modulation-piece}
    \end{equation}
    \item On the transition pieces one has $d(t)\ge\theta_n .$
    More precisely, for \(m=0,\ldots,N_n-1\), $d(t)\ge\theta_n$
    on
    \[
        [e_{m,n}^{R},c_{m,n}^{R}]
        \cup
        [f_{m,n}^{R},f_{m+1,n}^{L}]
        \cup
        [c_{m+1,n}^{L},e_{m+1,n}^{L}].
    \]
    \item On the compactness pieces one has $ d(t)\ge\varepsilon_* .$
    Namely, for \(m=0,\ldots,N_n-1\), $d(t)\ge\varepsilon_*$
    on $[c_{m,n}^{R},f_{m,n}^{R}]
        \cup
        [f_{m+1,n}^{L},c_{m+1,n}^{L}].$
    \item The intervals really leave the small-modulation regime:
    \[
        d(d_{m,n}^{R})\ge\eta_0,
        \qquad
        d(d_{m+1,n}^{L})\ge\eta_0,
        \qquad
        0\le m\le N_n-1.
    \]
    \item The endpoints \(c_{m,n}^{R}\) and \(c_{m,n}^{L}\) are threshold
    crossings:
    \[
        d(c_{m,n}^{R})=\theta_n,
        \qquad
        d(c_{m,n}^{L})=\theta_n .
    \]
    \item For every \(m=0,\ldots,N_n-1\), either $d(t)\ge\varepsilon_*$ for all $t\in[c_{m,n}^{R},c_{m+1,n}^{L}],$ or $d(f_{m,n}^{R})=d(f_{m+1,n}^{L})=\varepsilon_* .$
    
    \item On the pieces connecting the modulation intervals to the compactness
    region, the auxiliary scale is comparable:
    \begin{equation*}
        \sup_{t\in[e_{m,n}^{L},c_{m,n}^{R}]}\mu_*(t)
        \le
        C_0
        \inf_{t\in[e_{m,n}^{L},c_{m,n}^{R}]}\mu_*(t),
    \end{equation*}
    and
    \begin{equation*}
        \sup_{t\in[c_{m,n}^{L},e_{m,n}^{R}]}\mu_*(t)
        \le
        C_0
        \inf_{t\in[c_{m,n}^{L},e_{m,n}^{R}]}\mu_*(t),
    \end{equation*}
    whenever the intervals are defined.
\end{enumerate}
\end{prop}
\begin{proof}
The decomposition is obtained by the stopping-time construction of
\cite{JL}. Starting from \(a_n\), one follows the solution as long as it
remains in the small-modulation region \(d(t)\le\eta_0\); this gives the
intervals \([e_{m,n}^{L},e_{m,n}^{R}]\). On these intervals,
\eqref{eq:small-modulation-piece} follows from
Proposition~\ref{prop:modulation-interval-control}, since
\(\theta_n\ge\delta_n\) and the pieces where \(d(t)\le\theta_n\) contribute only
to the right-hand side after increasing \(C_0\).
The points \(c_{m,n}^{R}\), \(d_{m,n}^{R}\), \(f_{m,n}^{R}\), and their left-hand
analogues are then defined as the first or last hitting times of the levels
\(\theta_n\), \(\eta_0\), and \(\varepsilon_*\), exactly as in the wave equation.
This immediately gives properties (2)--(6).

It remains only to justify the comparability of \(\mu_*\) in (7). If, for
example, \(\mu_*\) varied by a large factor on
\([e_{m,n}^{L},c_{m,n}^{R}]\), the Lipschitz property of \(\mu_*\) would give a
subinterval whose length is comparable to the smaller value of \(\mu_*\). Since
\(d(t)\ge\theta_n\) after leaving the small-modulation piece, Lemma
\ref{lem:excursion-length} would then force an excursion of length comparable to
\(\mu_*\), contradicting the way the stopping times were chosen. This is the
standard finite-speed argument; it uses only Lemmas~\ref{lem:auxiliary-scale} and
\ref{lem:excursion-length}. The damping term does not enter.
\end{proof}

\subsection{Localized virial estimates and the contradiction}

We now complete the no-return argument by integrating a localized virial identity
over the partition obtained in Proposition~\ref{prop:interval-decomposition}.
The finite-time and global cases share the same interval decomposition. The only
difference is the treatment of the damping term in the virial identity.

We next choose the moving cut-off used in the localized virial argument. Denote
\[
    \Omega_{\rho}(\boldsymbol u)
    :=
    \Omega_{1,\rho}(\boldsymbol u)
    +
    \frac{D-2}{2}\Omega_{2,\rho}(\boldsymbol u).
\]

\begin{lemma}[Choice of the moving cut-off]
\label{lem:moving-cutoff}
There exist \(\theta_0>0\) and, after passing to a subsequence, locally
Lipschitz functions $\rho_n:I_n\to(0,\infty)$
with the following properties.
\begin{enumerate}
    \item The cut-off scale separates the interior cluster from the exterior
    region:
    \begin{equation}
        \inf_{t\in I_n}\frac{\rho_n(t)}{\mu_*(t)}\to\infty,
        \qquad
        \sup_{t\in I_n}\frac{\rho_n(t)}{\nu_n(t)}\to0.
        \label{eq:rho-scale-separation}
    \end{equation}
    \item The endpoint contribution is negligible:
    \begin{equation}
        \rho_n(a_n)\|\partial_tu(a_n)\|_{L^2}
        +
        \rho_n(b_n)\|\partial_tu(b_n)\|_{L^2}
        =
        o_n(1)\max\{\mu_*(a_n),\mu_*(b_n)\}.
        \label{eq:rho-endpoint-small}
    \end{equation}
    \item If \(t_0\in I_n\) and \(d(t_0)\le\frac12\theta_0\), then $|\rho_n'(t)|\le1$
    for almost every \(t\) in a neighborhood of \(t_0\).
    \item The localized scaling error satisfies
\begin{equation}
    \sup_{t\in I_n}
    |\Omega_{\rho_n(t)}(\boldsymbol u(t))|
    \to0.
    \label{eq:Omega-rho-vanishes}
\end{equation}

    \item In the global case \(T_+=\infty\), the functions \(\rho_n\) can be
    chosen so that, on every small-modulation piece of the partition in
    Proposition~\ref{prop:interval-decomposition},
    \begin{equation}
        |\mathcal V_G(t)|
        \le C_0 d(t),
        \qquad
        \mathcal V_G(t)
        :=\lelan \partial_t u(t)\mid \chi_{\rho_n(t)}\left(r\partial_ru(t)+\frac{D-2}{2}u(t)\right) \rilan
        \label{eq:global-virial-d-small}
    \end{equation}
\end{enumerate}
\end{lemma}
\begin{proof}
The construction follows the cut-off selection of \cite{JL}. The scale
\(\mu_*\) is \(1\)-Lipschitz and measures the size of the interior cluster,
whereas \(\nu_n\) separates this cluster from the exterior region. By
Proposition~\ref{prop:collision-reduction} and Lemma~\ref{lem:auxiliary-scale},
we have $\frac{\mu_*(t)}{\nu_n(t)}\to0$
uniformly on \(I_n\). Hence one may choose a locally Lipschitz intermediate scale
\(\rho_n\) satisfying \eqref{eq:rho-scale-separation}, and then regularize it so
that \(|\rho_n'|\le1\) whenever \(d(t)\) is small. This gives (1) and (3).
The endpoint condition \eqref{eq:rho-endpoint-small} is obtained by choosing
\(\rho_n(a_n)\) and \(\rho_n(b_n)\) inside the interval $\mu_*(t)\ll \rho_n(t)\ll \nu_n(t)$
slowly enough. Since \(d(a_n),d(b_n)\to0\), the endpoint kinetic energy in the
region selected by \(\rho_n\) is negligible compared with the scale
\(\max\{\mu_*(a_n),\mu_*(b_n)\}\).

It remains to justify \eqref{eq:Omega-rho-vanishes}. The scale separation
\(\mu_*(t)\ll\rho_n(t)\ll\nu_n(t)\) places the cut-off in an annular region
between the interior \(K\)-bubble cluster and the exterior part. The pure
multi-bubble contribution cancels in
\(\Omega_{1,\rho}+\frac{D-2}{2}\Omega_{2,\rho}\) by the scaling identity, while
the exterior error, the annular energy, and the radiation term in the finite-time
case are \(o_n(1)\) by the definition of \(\zeta_n\). This proves
\eqref{eq:Omega-rho-vanishes}.

Finally assume \(T_+=\infty\). We prove \eqref{eq:global-virial-d-small}. It is
enough to show that on the small-modulation pieces where this estimate is used
the cut-off scale \(\rho_n(t)\) is uniformly bounded.
We first record the following consequence of the global dissipation. Let
\([c_n,d_n]\subset I_n\) be any sequence of subintervals on which the solution
makes a genuine excursion, namely
\[
    d(c_n)\le\theta_n,\qquad d(d_n)\le\theta_n,
    \qquad
    \sup_{t\in[c_n,d_n]}d(t)\ge\varepsilon_* .
\]
Since the collision intervals $I_n$ are pairwise disjoint, the intervals $[c_n,d_n]$ are pairwise disjoint and
\begin{equation}
    d_n-c_n\to0.
    \label{eq:global-excursion-length-zero}
\end{equation}
Indeed, otherwise the compactness lemma applied on such intervals gives
\[
    \int_{c_n}^{d_n}\|\partial_tu(t)\|_{L^2}^2\,dt
    \gtrsim d_n-c_n .
\]
Since the intervals are disjoint and $\int_0^\infty\|\partial_tu(t)\|_{L^2}^2\,dt<\infty,$
we obtain \eqref{eq:global-excursion-length-zero}. Combining this with
Lemma~\ref{lem:excursion-length} yields 
\begin{equation*}
    \max\{\mu_*(c_n),\mu_*(d_n)\}\to0.
\end{equation*}
In particular, by Lemma~\ref{lem:auxiliary-scale} and
Proposition~\ref{prop:modulation-interval-control}, on the adjacent
small-modulation pieces we have
\begin{equation}
    \sup \lambda_K(t)\to0 .
    \label{eq:lambdaK-small-global-pieces}
\end{equation}
We now distinguish two cases. If \(K<N\), the cut-off is chosen so that
\[
    \mu_*(t)\ll \rho_n(t)\ll \nu_n(t)\ll \mu_{K+1,n}(t).
\]
The exterior scales are uniformly bounded in the global case; in particular
\[
    \mu_{K+1,n}(t)\le \mu_{N,n}(t)\le C .
\]
Hence \(\rho_n(t)\le C\) on the relevant small-modulation pieces.
If \(K=N\), then \eqref{eq:lambdaK-small-global-pieces} allows us to choose an
auxiliary exterior scale \(\mu_{K+1,n}(t)\) satisfying
\[
    \lambda_K(t)\ll \mu_{K+1,n}(t)\ll1
\]
on these pieces. We then choose the cut-off so that
\[
    \mu_*(t)\ll \rho_n(t)\ll \mu_{K+1,n}(t),
\]
and again \(\rho_n(t)\le C\).
Thus in both cases the cut-off scale is uniformly bounded on every
small-modulation piece where \eqref{eq:global-virial-d-small} is used. Since
\(\boldsymbol u^*\equiv0\) in the global case, the definition of the distance
function gives
\[
    \|\partial_tu(t)\|_{L^2}\lesssim d(t).
\]
Therefore, by the Cauchy--Schwarz and Hardy's inequality we have
\[
\begin{aligned}
    |\mathcal V_G(t)|
    &\leq\left\|\partial_t u(t)\right\|_{L^2}\left(\rho_n(t)\left\|\partial_ru(t)\right\|_{L^2(0,2\rho_n)}+\left\|u(t)\right\|_{L^2(0,2\rho_n)}\right)  \\
    &\lesssim
    \|\partial_tu(t)\|_{L^2}\,
    \rho_n(t)\|u(t)\|_{\dot H^1}
    \lesssim d(t),
\end{aligned}
\]
which proves \eqref{eq:global-virial-d-small}.
\end{proof}

We now state the localized virial estimate on the pieces where the solution is
not too close to the multi-bubble manifold. Define
\[
    \mathcal V_n(t)
    :=
    \begin{cases}
    \displaystyle
    e^{\alpha(t-T_+)}
    \int_0^\infty
        \partial_tu(t,r)\left(r\partial_ru(t,r)
        +\frac{D-2}{2}u(t,r)\right)\chi_{\rho_n(t)}(r)\,r^{D-1}\,dr,
    & T_+<\infty,\\[1.2em]
    \displaystyle
   \int_0^\infty
        \partial_tu(t,r)\left(r\partial_ru(t,r)
        +\frac{D-2}{2}u(t,r)\right)\chi_{\rho_n(t)}(r)\,r^{D-1}\,dr,
    & T_+=\infty.
    \end{cases}
\]
\begin{prop}[Localized virial descent]
\label{prop:localized-virial-descent}
The following properties hold after passing to a subsequence.
\begin{enumerate}
\item For every \(\sigma>0\), after decreasing \(\eta_0>0\) if necessary,
there exists a sequence \(\theta_n\downarrow0\), chosen so slowly that
\[
    \max\{\varepsilon_n,\delta_n\}\le \theta_n\le\theta_0,
\]
such that the following holds.
Let \([\tilde a_n,\tilde b_n]\subset I_n\) be one of the subintervals obtained
from the elementary Lipschitz subdivision of \(\mu_*\), as in
\cite[Lemma~6.9]{JL}; in particular,
\[
    \tilde b_n-\tilde a_n
    \simeq
    \sup_{t\in[\tilde a_n,\tilde b_n]}\mu_*(t)
    \simeq
    \inf_{t\in[\tilde a_n,\tilde b_n]}\mu_*(t).
\]
Assume that $d(t)\ge \theta_n$ for all $t\in[\tilde a_n,\tilde b_n]$
and, in the global case, also \(d(t)\le\eta_0\) on this interval. Then
    \begin{equation}
        \mathcal V_n(\tilde b_n)
        \le
        \mathcal V_n(\tilde a_n)
        +
        \bigl(\sigma+o_n(1)\bigr)
        \sup_{t\in[\tilde a_n,\tilde b_n]}\mu_*(t).
        \label{eq:virial-almost-nonincrease}
    \end{equation}
    \item For every \(c>0\) and every \(\theta>0\), there exists
    \(\delta=\delta(c,\theta)>0\) such that, for all sufficiently large \(n\), if
    \([\tilde a,\tilde b]\subset I_n\) satisfies
    \[
        \tilde b-\tilde a
        \ge
        c\,\mu_*(\tilde a),
        \qquad
        d(t)\ge\theta
        \quad\text{for all }t\in[\tilde a,\tilde b],
    \]
    then
    \begin{equation}
        \mathcal V_n(\tilde b)-\mathcal V_n(\tilde a)
        \le
        -\delta
        \sup_{t\in[\tilde a,\tilde b]}\mu_*(t).
        \label{eq:virial-strict-descent}
    \end{equation}
    In the global case, after decreasing \(\delta\) if necessary, one also has
    \begin{equation}
        \mathcal V_n(\tilde b)-\mathcal V_n(\tilde a)
        \le
        -\delta(\tilde b-\tilde a).
        \label{eq:virial-strict-time-descent}
    \end{equation}
\end{enumerate}
\end{prop}
\begin{proof}
By the elementary subdivision lemma \cite[Lemma~6.9]{JL}, it suffices to prove
the first assertion on intervals for which
\[
    \tilde b-\tilde a\simeq \sup_{[\tilde a,\tilde b]}\mu_*
    \simeq \inf_{[\tilde a,\tilde b]}\mu_* .
\]
We record the localized virial identities. In the finite-time case, by the
definition of \(\mathcal V_n\) and Lemma~\ref{Virial identity},
\begin{equation}
    \mathcal V_n'(t)
    =
    -e^{\alpha(t-T_+)}
    \int_0^\infty
        (\partial_tu(t,r))^2\chi_{\rho_n(t)}(r)r^{D-1}\,dr
    +
    \mathcal R_n(t).
    \label{eq:finite-localized-virial}
\end{equation}
In the global case,
\begin{equation}
    \mathcal V_n'(t)
    =
    -
    \int_0^\infty
        (\partial_tu(t,r))^2\chi_{\rho_n(t)}(r)r^{D-1}\,dr
    -
    \alpha\mathcal V_n(t)
    +
    \mathcal R_n(t).
    \label{eq:global-localized-virial}
\end{equation}
Here \(\mathcal R_n(t)\) denotes the corresponding localized scaling error
\(\Omega_{\rho_n(t)}(\boldsymbol u(t))\), with the additional factor
\(e^{\alpha(t-T_+)}\) in the finite-time case. By Lemma~\ref{lem:moving-cutoff},
\[
    \sup_{t\in I_n}|\mathcal R_n(t)|=o_n(1).
\]
Hence, on every interval \([\tilde a,\tilde b]\) after the above subdivision,
\begin{equation}
    \int_{\tilde a}^{\tilde b}|\mathcal R_n(t)|\,dt
    \le
    o_n(1)
    \sup_{t\in[\tilde a,\tilde b]}\mu_*(t).
    \label{eq:virial-error-small}
\end{equation}
We now prove the first assertion. In the finite-time case, \eqref{eq:finite-localized-virial} and
\eqref{eq:virial-error-small} immediately
give
\[
    \mathcal V_n(\tilde b_n)-\mathcal V_n(\tilde a_n)
    \le
    o_n(1)
    \sup_{t\in[\tilde a_n,\tilde b_n]}\mu_*(t).
\]
In the global case, using \eqref{eq:global-virial-d-small}, \eqref{eq:global-localized-virial} and
\eqref{eq:virial-error-small}, we obtain
\[
    \mathcal V_n(\tilde b_n)-\mathcal V_n(\tilde a_n)
    \le
    C
    \int_{\tilde a_n}^{\tilde b_n}d(t)\,dt
    +
    o_n(1)
    \sup_{t\in[\tilde a_n,\tilde b_n]}\mu_*(t).
\]
Since \(d(t)\le\eta_0\) on this interval, Proposition
\ref{prop:modulation-interval-control} and Lemma~\ref{lem:auxiliary-scale} imply
\[
    \int_{\tilde a_n}^{\tilde b_n}d(t)\,dt
    \le
    C\eta_0^{\frac4{D-2}}
    \sup_{t\in[\tilde a_n,\tilde b_n]}\mu_*(t).
\]
Choosing \(\eta_0>0\) so small that
\(C\eta_0^{\frac4{D-2}}\le\sigma\) gives
\eqref{eq:virial-almost-nonincrease}. The lower bound
\(d(t)\ge\theta_n\) is included for later application to the transition pieces;
it is harmless here. The sequence \(\theta_n\downarrow0\) is chosen by the usual
diagonal argument from the corresponding fixed-threshold statements.
We now prove the strict descent estimate. We argue by contradiction. Suppose
that \eqref{eq:virial-strict-descent} fails for some fixed \(c>0\) and
\(\theta>0\). Then, after passing to a subsequence, there exist intervals
\([\tilde a_n,\tilde b_n]\subset I_n\) such that
\[
    \tilde b_n-\tilde a_n\ge c\,\mu_*(\tilde a_n),
    \qquad
    d(t)\ge\theta
    \quad\text{for all }t\in[\tilde a_n,\tilde b_n],
\]
and
\begin{equation}
    \mathcal V_n(\tilde b_n)-\mathcal V_n(\tilde a_n)
    \ge
    -o_n(1)
    \sup_{t\in[\tilde a_n,\tilde b_n]}\mu_*(t).
    \label{eq:failure-strict-descent}
\end{equation}
In the finite-time case, \eqref{eq:finite-localized-virial},
\eqref{eq:virial-error-small}, and \eqref{eq:failure-strict-descent} yield
\begin{equation}
    \int_{\tilde a_n}^{\tilde b_n}
    \int_0^\infty
        (\partial_tu(t,r))^2
        \chi_{\rho_n(t)}(r)r^{D-1}\,dr\,dt
    =
    o_n(1)
    \sup_{t\in[\tilde a_n,\tilde b_n]}\mu_*(t).
    \label{eq:localized-kinetic-small-average}
\end{equation}
In the global case we apply the same argument to the locally weighted functional $ e^{\alpha(t-\tilde a_n)}\mathcal V_n(t).$
Indeed,
\[
    \frac{d}{dt}
    \left(e^{\alpha(t-\tilde a_n)}\mathcal V_n(t)\right)
    =
    -e^{\alpha(t-\tilde a_n)}
    \int_0^\infty
        (\partial_tu(t,r))^2
        \chi_{\rho_n(t)}(r)r^{D-1}\,dr
    +
    e^{\alpha(t-\tilde a_n)}\mathcal R_n(t).
\]
Since \(d(t)\ge\theta\) on \([\tilde a_n,\tilde b_n]\), the global dissipation
implies $\tilde b_n-\tilde a_n\to0.$
Indeed, otherwise the compactness lemma on intervals where \(d(t)\ge\theta\)
would give a uniform lower bound for
\[
    \int_{\tilde a_n}^{\tilde b_n}\|\partial_tu(t)\|_{L^2}^2\,dt,
\]
contradicting the global dissipation and the fact that the intervals are
pairwise disjoint. Hence the local weight is uniformly comparable to \(1\), and
\eqref{eq:localized-kinetic-small-average} follows in the global case as well.
By the Lipschitz property of \(\mu_*\) and
\(\tilde b_n-\tilde a_n\ge c\mu_*(\tilde a_n)\), there exists
\(s_n\in[\tilde a_n,\tilde b_n]\) such that
\begin{equation}
    \int_0^{\frac12\rho_n(s_n)}
        (\partial_tu(s_n,r))^2r^{D-1}\,dr
    \to0.
    \label{eq:localized-kinetic-small-time}
\end{equation}
Moreover, by Lemma~\ref{lem:moving-cutoff},
    $\mu_*(s_n)\ll\rho_n(s_n)\ll\nu_n(s_n).$
Combining \eqref{eq:localized-kinetic-small-time} with finite speed of
propagation and Lemma~\ref{lem:auxiliary-scale}, we obtain $d(s_n)\to0.$
This contradicts \(d(s_n)\ge\theta\). Therefore
\eqref{eq:virial-strict-descent} holds.

Finally, in the global case the same argument, combined with the compactness
lemma on intervals where \(d(t)\ge\theta\), gives a fixed lower bound for the
localized kinetic energy per unit time unless \(d(t_n)\to0\) along a subsequence.
The latter is impossible by \(d(t)\ge\theta\). Hence, after decreasing
\(\delta\) if necessary,
\[
    \mathcal V_n(\tilde b)-\mathcal V_n(\tilde a)
    \le
    -\delta(\tilde b-\tilde a),
\]
which is \eqref{eq:virial-strict-time-descent}.
\end{proof}

We now finish the no-return argument. Recall that \(I_n=[a_n,b_n]\) is a
collision interval. Let \(\mathcal V_n\) be the localized virial functional
defined above, and set
\[
    M_n:=\max\{\mu_*(a_n),\mu_*(b_n)\}.
\]
We apply Proposition~\ref{prop:interval-decomposition} with the sequence
\(\theta_n\) given by Proposition~\ref{prop:localized-virial-descent}. Increasing \(\theta_n\) if necessary, we may assume $ \max\{\varepsilon_n,\delta_n\}\le \theta_n\le\theta_0.$
 For a
subinterval \(J\subset I_n\), write
\[
    \Delta_J\mathcal V_n
    :=
    \mathcal V_n(\sup J)-\mathcal V_n(\inf J).
\]
Let \(\mathcal S_n\), \(\mathcal T_n\), and \(\mathcal K_n\) denote respectively
the collections of small-modulation pieces, transition pieces, and compactness
pieces in the decomposition. After applying the elementary subdivision to the transition pieces, we still
denote by \(\mathcal T_n\) the resulting family of transition subintervals.
We first estimate the small-modulation pieces. Let
\(J=[e_{m,n}^{L},e_{m,n}^{R}]\in\mathcal S_n\). On \(J\) we have
\(d(t)\le\eta_0\), and, after decreasing \(\eta_0\) if necessary,
Lemma~\ref{lem:moving-cutoff} gives \(|\rho_n'(t)|\le1\) for a.e. \(t\in J\).
The localized virial identity, the weak form of Corollary~\ref{cor:localized-virial-control} recorded at the end of its proof, and
the estimate \(\|\partial_tu(t)\|_{L^2(0,\rho_n(t))}\lesssim d(t)+\zeta_n\)
give
\begin{equation*}
    \Delta_J\mathcal V_n
    \le
    C\int_J d(t)\,dt
    +
    o_n(1)\sup_{t\in J}\mu_*(t).
\end{equation*}
By Proposition~\ref{prop:interval-decomposition},
\[
    \int_J d(t)\,dt
    \le
    C\theta_n^{\frac4{D-2}}
    \min\{\mu_*(e_{m,n}^{L}),\mu_*(e_{m,n}^{R})\}.
\]
Since \(\theta_n\le\theta_0\), we obtain
\begin{equation}
    \Delta_J\mathcal V_n
    \le
    C\theta_0^{\frac4{D-2}}
    \min\{\mu_*(e_{m,n}^{L}),\mu_*(e_{m,n}^{R})\}
    +
    o_n(1)\sup_{t\in J}\mu_*(t).
    \label{eq:small-piece-bound}
\end{equation}
Next consider a transition piece. By the subdivision in
Proposition~\ref{prop:interval-decomposition} and \cite[Lemma~6.9]{JL}, each transition piece is divided into finitely many subintervals \(J\) such that
\[
    |J|\simeq \sup_{t\in J}\mu_*(t)\simeq \inf_{t\in J}\mu_*(t),
    \qquad
    d(t)\ge\theta_n \quad\text{for all }t\in J.
\]
Moreover, in the global case, \(d(t)\le\eta_0\) on the relevant transition pieces.
Applying Proposition~\ref{prop:localized-virial-descent}
\eqref{eq:virial-almost-nonincrease} to each such \(J\), we get
\begin{equation}
    \Delta_J\mathcal V_n
    \le
    \bigl(\sigma+o_n(1)\bigr)
    \sup_{t\in J}\mu_*(t).
    \label{eq:transition-piece-bound}
\end{equation}
Summing over the transition subintervals and using the comparability of
\(\mu_*\) from Proposition~\ref{prop:interval-decomposition}, we obtain
\begin{equation*}
    \sum_{J\in\mathcal T_n}\Delta_J\mathcal V_n
    \le
    \bigl(\sigma+o_n(1)\bigr)
    \sum_{J\in\mathcal K_n}\sup_{t\in J}\mu_*(t).
\end{equation*}
On each compactness piece \(J\in\mathcal K_n\), we have
\(d(t)\ge\varepsilon_*\). The length lower bound in
Proposition~\ref{prop:interval-decomposition} allows us to apply
Proposition~\ref{prop:localized-virial-descent}
\eqref{eq:virial-strict-descent}, and hence
\begin{equation}
    \Delta_J\mathcal V_n
    \le
    -\delta
    \sup_{t\in J}\mu_*(t),
    \label{eq:compact-piece-descent}
\end{equation}
where \(\delta>0\) is independent of \(n\). In the global case, the additional
time-descent estimate \eqref{eq:virial-strict-time-descent} only strengthens this
bound on long compactness pieces.
It remains to sum the estimates. The stopping-time construction and the
comparability of \(\mu_*\) in Proposition~\ref{prop:interval-decomposition} give
\begin{equation*}
    \sum_{J\in\mathcal S_n}
    \min\{\mu_*(\inf J),\mu_*(\sup J)\}
    +
    \sum_{J\in\mathcal T_n}
    \sup_{t\in J}\mu_*(t)
    \le
    C
    \sum_{J\in\mathcal K_n}
    \sup_{t\in J}\mu_*(t).
\end{equation*}
Therefore, summing \eqref{eq:small-piece-bound},
\eqref{eq:transition-piece-bound}, and \eqref{eq:compact-piece-descent}, we get
\[
    \mathcal V_n(b_n)-\mathcal V_n(a_n)
    \le
    \left(
        C\theta_0^{\frac4{D-2}}
        +C\sigma
        -\delta
        +o_n(1)
    \right)
    \sum_{J\in\mathcal K_n}
    \sup_{t\in J}\mu_*(t).
\]
Choose first \(\theta_0>0\) and \(\sigma>0\) so small that $ C\theta_0^{\frac4{D-2}}+C\sigma\le\frac{\delta}{2},$
and then take \(n\) sufficiently large. We obtain
\begin{equation*}
    \mathcal V_n(b_n)-\mathcal V_n(a_n)
    \le
    -c
    \sum_{J\in\mathcal K_n}
    \sup_{t\in J}\mu_*(t)
\end{equation*}
for some \(c>0\).
Since \(I_n\) is a collision interval, the partition contains at least one
compactness piece. Moreover, the stopping-time construction gives
\[
    \sum_{J\in\mathcal K_n}
    \sup_{t\in J}\mu_*(t)
    \gtrsim
    \max\{\mu_*(a_n),\mu_*(b_n)\}
    =
    M_n .
\]
Hence
\begin{equation}
    \mathcal V_n(b_n)-\mathcal V_n(a_n)
    \le
    -cM_n .
    \label{eq:virial-negative-Mn}
\end{equation}
On the other hand, by Lemma~\ref{lem:moving-cutoff}
\eqref{eq:rho-endpoint-small} and the boundedness of the energy,
\[
\begin{aligned}
    |\mathcal V_n(a_n)|+|\mathcal V_n(b_n)|
    \lesssim
    \rho_n(a_n)\|\partial_tu(a_n)\|_{L^2}
    +
    \rho_n(b_n)\|\partial_tu(b_n)\|_{L^2}  
    =
    o_n(1)M_n .
\end{aligned}
\]
Thus $ \mathcal V_n(b_n)-\mathcal V_n(a_n)
    \ge
    -o_n(1)M_n ,$
which contradicts \eqref{eq:virial-negative-Mn}. Hence the assumed collision
intervals cannot exist. Therefore the contradiction assumption
\eqref{eq:d-goes-zero} is false, and $\lim_{t\to T_+}d(t)=0.$
This proves the full-time soliton resolution and completes the proof of
Theorem~\ref{soliton resolution}.

 \section*{Acknowledgment}
The authors would like to thank Professor Kenji Nakanishi for many helpful discussions.  L. Zhao is supported by NSFC Grant of China No. 12271497, No. 12341102 and the National Key Research and Development Program of China No. 2020YFA0713100.

\end{document}